\documentclass[12pt]{amsart}
\usepackage{amssymb,latexsym, graphicx}
 \usepackage[all]{xy}
\usepackage{array}
\pagestyle{empty}

\setlength{\textwidth}{6in}
\setlength{\evensidemargin}{0.4in}
\setlength{\oddsidemargin}{0.4in}
\setlength{\footskip}{.3in}

\newtheorem{theorem}{Theorem}[section]
\newtheorem{lemma}[theorem]{Lemma}
\newtheorem{proposition}[theorem]{Proposition}
\newtheorem{corollary}[theorem]{Corollary}

\newtheorem*{theorem1point3}{Theorem 1.3}

\newtheorem*{brumerconjecture}{Brumer Conjecture}
\newtheorem*{BSconj}{Brumer-Stark Conjecture}

\newtheorem*{symbolproperties}{Properties of $(\sigma, \cdot)_{\Kum}$ and $(\sigma, \cdot)_{\N}$}
\newtheorem*{intproperty}{Integrality Property}

\theoremstyle{definition}
\newtheorem*{definition}{Definition}

\theoremstyle{remark}
\newtheorem*{remark}{Remark}

\newtheorem*{claim}{Claim}

\def\noqed{\renewcommand{\qedsymbol}{}}

\DeclareMathOperator{\rk}{\mathrm{rk}}
\DeclareMathOperator{\N}{\mathrm{N}}
\DeclareMathOperator{\Ann}{\mathrm{Ann}}

\newcommand{\Cl}{\mathrm{Cl}}

\newcommand{\Gal}{\mathrm{Gal}}

\newcommand{\Hom}{\mathrm{Hom}}

\newcommand{\ns}{\mathrm{ns}}
\newcommand{\Fit}{\mathrm{Fit}}
\newcommand{\Kum}{\mathrm{Kum}}

\begin{document}
\title{Divisibility of partial zeta function values at zero for degree $2p$ extensions}
\author{Barry~R. Smith}

\begin{abstract}
Let $K/k$ be an Abelian extension of number fields, $S$ be a set of places of $k$, and $p$ be an odd prime number.  We continue an investigation begun in \cite{bS2012a} into the values at 0 of the $S$-imprimitive partial zeta functions of $K/k$.  The main result of \cite{bS2012a} provides, under the
assumption that the $p$-power roots of unity in $K$ are cohomologically trivial, a criterion for the values to have larger than expected $p$-valuation.  The present paper provides such a criterion for a special class of degree $2p$ extensions for which the $p$-power
roots of unity are not cohomologically trivial.  For such extensions, new sufficient conditions are also given for the $p$-local Brumer-Stark conjecture for $K/k$ and for Leopoldt's conjecture on the number of independent $\mathbb{Z}_p$-extensions of $k$.
\end{abstract}

\maketitle

\section{Introduction}\label{S:intro}

In this article is set forth a new arithmetical interpretation of the values of partial zeta functions at $0$ for a special class of Abelian number field extensions.  

Let $K/k$ be an Abelian  extension of number fields.  A partial zeta function $\zeta_{K/k, S} (\sigma, s)$ is associated with each automorphism $\sigma$ of $K/k$ and finite set $S$ of places of $k$. Siegel \cite{cS1970} and Shintani \cite{tS1976} independently proved that the value $\zeta_{K/k, S} (\sigma, -n)$ is rational when $n$ is a nonnegative integer and $S$ contains the Archimedean places and the places that ramify in $K/k$.  Extending the work of Siegel and Shintani respectively, Deligne and Ribet \cite{DR1980} and Cassou-Nogu\`{e}s \cite{pC1979} bounded the denominator of $\zeta_{K/k, S} (\sigma, -n)$, proving in particular that $w_K \zeta_{K/k, S} (\sigma, 0)$ is an integer, where $w_K$ is the number of roots of unity in $K$.

The Brumer-Stark Conjecture and the Coates-Sinnott Conjecture provide arithmetical interpretations of the values $\zeta_{K/k, S} (\sigma, -n)$.  They state that when the partial zeta functions of the automorphisms of $K/k$ are assembled into one equivariant $L$-function, its values at non-positive integers are related to the Galois module structure of the class group and higher \'{e}tale cohomology groups of $K$.

In previous work, the author investigated instead the factors of the individual integers $w_K \zeta_{K/k, S} (\sigma, 0)$.  The main result of \cite{bS2012a} states that when $p^r$ is an odd prime power factor of $w_K$, the integers $w_K \zeta_{K/k, S} (\sigma, 0)$ are divisible by $p^r$ if the group of $p$-power roots of unity in $K$ is $\Gal (K/k)$-cohomologically trivial and $\Gal(K/k)$-isomorphic to a submodule of the class group of $K$.  It is contingent on Brumer's Conjecture for the extension $K/k$.

The present article reveals a new interpretation of the integers $w_K \zeta_{K/k, S} (\sigma, 0)$ for a special class of degree $2p$ Abelian number field extensions whose group of $p$-power roots of unity is not $\Gal(K/k)$-cohomologically trivial:

\begin{definition}
Fix an odd prime number $p$ and let $K_1/k_0$ be a degree $2p$ Abelian extension of number fields with $k_0$ totally real and $K_1$ totally complex.  Let $K_0$ and $k_1$ be the unique intermediate fields with $\left[ K_0 \colon k_0 \right] = 2$ and $\left[ k_1 \colon k_0 \right] = p$.  Let $N \colon \left( \Cl_{K_1} \otimes \mathbb{Z}_p \right)^- \rightarrow \left( \Cl_{{K_0}} \otimes \mathbb{Z}_p \right)^-$ be the norm map between the components of the $p$-primary class groups of $K_1$ and $K_0$ upon which complex conjugation acts by inversion.  We say $K_1/k_0$ is a \textbf{TK extension} if
\begin{enumerate}
	\item The group of $p$-power roots of unity in $K_1$ is not $\Gal(K_1/k_0)$-cohomologically trivial
	\item No prime ideal of $k_0$ splits in $K_0/k_0$ and ramifies in $k_1/k_0$
	\item The norm map $N \colon \left( \Cl_{K_1} \otimes \mathbb{Z}_p \right)^- \rightarrow \left( \Cl_{K_0} \otimes \mathbb{Z}_p \right)^-$ has trivial kernel
\end{enumerate}
If in addition, no prime of $k_0$ dividing $p$ splits in $K_0/k_0$, we say $K_1/k_0$ is a \textbf{TKNS extension}.
\end{definition}

Proposition \ref{P:trivialproperties} will show that TK extensions are unramified away from $p$. Thus, the extension $k_1/k_0$ is an Abelian extension of $k_0$ unramified away from $p$ and different from the cyclotomic one.  One of our main results (Corollary \ref{C:nosplitconditions}) states that when $K_1/k_0$ is a TKNS extension and the number of $p$-power roots of unity in $K_1$ is $p^r$, then $p^r$ divides each integer $w_{K_1} \zeta_{K_1/k_0, S} (\sigma, 0)$ if and only if there exists a cyclic extension of $k_0$ of degree $p^{r+1}$, unramified away from $p$ and linearly disjoint over $k_0$ from the cyclotomic $\mathbb{Z}_p$-extension.  It is contingent on the Brumer-Stark Conjecture for $K_1/k_0$.  In fact, the main stepping-stone in the proof is a reduction of the $p$-local Brumer-Stark conjecture for TK extensions to the vanishing of a certain cohomology class (Theorem \ref{T:bsharprank1}).

Notwithstanding their special nature, it is easy to produce examples of TKNS extensions.  To concretize Corollary \ref{C:nosplitconditions}, the reader is invited to look at the table of examples on page 25 computed using the PARI/GP software.  Each row represents an Abelian extension $K_1/k_0$ of degree 6.  The group of roots of unity in each $K_1$ has order 6.  A ``yes'' in the $\mathfrak{K}_p$ column indicates the extension is of type TKNS and so satisfies the hypotheses of our main results, Theorem \ref{T:main} and Corollary \ref{C:nosplitconditions}.  The penultimate column contains an element of $\mathbb{Z}[\Gal(K_1/k_0)]$ whose coefficients are the values $6 \zeta_{K_1/k_0, S} (\sigma, 0)$.  The last column displays ``yes'' if and only if there exists a cyclic extension of $k_0$ of degree $9$, unramified away from $3$ and linearly disjoint over $k_0$ from the cyclotomic $\mathbb{Z}_3$-extension.  The reader should observe that the coefficients in the penultimate column are multiples of $3$ if and only if the final column says ``yes''.  This exemplifies the new interpretation of partial zeta function values being set forth in this article.  While it is more specialized that given in \cite{bS2012a}, it lies considerably deeper.

%With this data, we will shortly associate an equivariant $L$-function whose value at zero, $\theta_{K/k, S}(0)$, can be shown to be in $\frac{1}{w_K} \mathbb{Z}[G]$.  The main result of \cite{bS2012a} (Theorem \ref{T:priortheorem} below) gives a condition sufficient for $w_k \theta_{K/k, S}(0)$ to be in $p^r \mathbb{Z}[G]$ when $\mu_K \otimes \mathbb{Z}_p$ is $G$-cohomologically trivial and $K/k$ satisfies the Brumer Conjecture.

%The present paper concerns degree $2p$ extensions, the smallest ones whose $p$-power roots of unity can have non-trivial cohomology.  The main result (Theorem \ref{T:main}) gives a condition equivalent to $w_k \theta_{K/k, S}(0)$ being in $p^r \mathbb{Z}[G]$ for an exceptional class of extensions having nontrivial cohomology and satisfying the $p$-local Brumer-Stark conjecture.    We also give a sufficient condition for Leopoldt's conjecture to hold for the base field $k$ of one of these extensions (Corollary \ref{C:Leopoldt}).

We now elaborate. Let $K/k$ be an Abelian extension of number fields with $k$ totally real and $K$ totally complex.  Denote the Galois group of $K/k$ by $G$.  Let $S$ be a finite set of places of $k$ containing the Archimedean places and the prime ideals that ramify in $K/k$. 
If $\mathfrak{p}$ is a prime ideal of $k$ not contained in $S$, we denote the Frobenius automorphism in $G$ associated with $\mathfrak{p}$ by $\sigma_{\mathfrak{p}}$.  For each $\chi$ in the group $\widehat{G}$ of complex-valued characters of $G$, the Abelian $L$-function deprived of the Euler factors corresponding to primes in $S$ is defined by
\begin{equation*}
	L_{K/k, S} (s, \chi) = \prod_{\mathfrak{p} \not\in S} \left( 1 - \frac{\chi 
	\left( \sigma_{\mathfrak{p}} \right)}{N \mathfrak{p}^{s}} \right)^{-1}.
\end{equation*}
This function converges absolutely and uniformly on compact subsets of $\mathrm{Re} (s) > 1$.  It can be analytically continued to an entire function excepting a simple pole at $s=1$ when $\chi$ is the trivial character.

\begin{definition}
The $S$-imprimitive \textbf{equivariant $L$-function} is the function\\ $\theta_{K/k, S} \colon \mathbb{C} \setminus \{ \, 1 \, \} \rightarrow \mathbb{C}[G]$ defined by
\begin{equation*}
	\theta_{K/k, S} (s) = \sum_{\chi \in \widehat{G}} L_{K/k, S} \left( s, \overline{\chi} \right) e_{\chi},
\end{equation*}
in which $e_{\chi}$ is the idempotent corresponding to $\chi$. 
\end{definition}
\noindent When $k$ and $K$ are the same field, $\theta_{K/k, S}$ is just the $S$-imprimitive Dedekind zeta function of $k$.

\begin{definition}
The $S$-imprimitive \textbf{partial zeta functions} $\zeta_{K/k, S} (\sigma, s)$ are defined through the relationship
\begin{equation*}
	\theta_{K/k, S} (s) = \sum_{\sigma \in G} \zeta_{K/k, S} (\sigma, s) \sigma^{-1}
\end{equation*}
\end{definition}
The work of Siegel \cite{cS1970} and Shintani \cite{tS1976} mentioned above shows that $\theta_{K/k,S} (0)$ has rational coefficients.  Let $\mu_{K}$ and $\Cl_K$ denote respectively the group of roots of unity in $K$ and the class group of $K$, and let $\Ann_{\mathbb{Z}[G]} \mu_K$ and $\Ann_{\mathbb{Z}[G]} \Cl_K$ be their $\mathbb{Z}[G]$-annihilators.  Deligne and Ribet \cite{DR1980} and Cassou-Nogu\`{e}s \cite{pC1979} obtained bounds on the denominators of the rational numbers $\zeta_{K/k, S} (\sigma, s)$ as a consequence of the following property of $\theta_{K/k, S} (0)$:
\begin{intproperty}
	$\Ann_{\mathbb{Z}[G]} \mu_K \, \cdot \, \theta_{K/k, S} (0) \subseteq \mathbb{Z}[G]$
\end{intproperty}
\noindent The Brumer Conjecture refines this while generalizing the analytic class number formula for Dedekind zeta functions:
\begin{brumerconjecture}
	$\Ann_{\mathbb{Z}[G]} \mu_K \, \cdot \, \theta_{K/k, S} (0) \subseteq \Ann_{\mathbb{Z}[G]} \Cl_K$
\end{brumerconjecture}

The following proposition is an observation of David Hayes (\cite{dHwp}, Corollary 3.2).  Because \cite{dHwp} is unpublished, the proof is reproduced here.
\begin{proposition}\label{P:samedenominators}
When written in lowest terms, all coefficients of $\theta_{K/k, S} (0)$ have the same denominator, a divisor of $w_K$.
\end{proposition}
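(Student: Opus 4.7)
The plan is to extract two consequences from the Integrality Property and compare coefficients. First, since $w_K$ annihilates $\mu_K$, the Integrality Property gives $w_K \, \theta_{K/k,S}(0) \in \mathbb{Z}[G]$, which at once shows every coefficient has denominator dividing $w_K$. Second, I will use a less obvious element of $\Ann_{\mathbb{Z}[G]} \mu_K$. For each $\tau \in G$, since $\mu_K$ is cyclic of order $w_K$ there is a unique class $c(\tau) \in (\mathbb{Z}/w_K \mathbb{Z})^\times$ such that $\tau \zeta = \zeta^{c(\tau)}$ for all $\zeta \in \mu_K$. Choosing any integer representative, still denoted $c(\tau)$, we have $\gcd(c(\tau), w_K) = 1$ and $\tau - c(\tau) \in \Ann_{\mathbb{Z}[G]} \mu_K$.

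Next I would write $\theta_{K/k, S}(0) = \sum_{\sigma \in G} c_\sigma \, \sigma^{-1}$ with $c_\sigma = \zeta_{K/k, S}(\sigma, 0) \in \mathbb{Q}$, and compute
\begin{equation*}
(\tau - c(\tau)) \, \theta_{K/k,S}(0) \;=\; \sum_{\sigma \in G} c_\sigma \, \tau \sigma^{-1} \; - \; c(\tau) \sum_{\sigma \in G} c_\sigma \, \sigma^{-1}.
\end{equation*}
Re-indexing the first sum by the substitution $\sigma \mapsto \sigma \tau$ (so that $\tau \sigma^{-1}$ becomes $\sigma^{-1}$) transforms this expression into $\sum_\sigma \bigl( c_{\sigma \tau} - c(\tau) c_\sigma \bigr) \sigma^{-1}$. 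By the Integrality Property this lies in $\mathbb{Z}[G]$, so coefficient by coefficient
\begin{equation*}
c_{\sigma \tau} \;\equiv\; c(\tau)\, c_\sigma \pmod{\mathbb{Z}} \qquad \text{for all } \sigma, \tau \in G.
\end{equation*}

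Finally, write $c_\sigma = a_\sigma / d_\sigma$ in lowest terms. By the first step $d_\sigma$ divides $w_K$, and by construction $c(\tau)$ is coprime to $w_K$ and hence to $d_\sigma$. Therefore $c(\tau) c_\sigma = c(\tau) a_\sigma / d_\sigma$ is already in lowest terms, with denominator $d_\sigma$. The congruence $c_{\sigma \tau} \equiv c(\tau) c_\sigma \pmod{\mathbb{Z}}$ then forces the lowest-terms denominator of $c_{\sigma \tau}$ to equal $d_\sigma$. Since $G$ acts transitively on itself by right translation, varying $\tau$ with $\sigma$ fixed shows that $d_{\sigma'}$ is the same for every $\sigma' \in G$, completing the proof.

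There is no real obstacle here: the only thing that requires attention is checking that an integer lift of $c(\tau) \in (\mathbb{Z}/w_K\mathbb{Z})^\times$ is automatically coprime to $w_K$, which is trivial but is the feature that makes the argument work.
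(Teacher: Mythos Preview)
Your proof is correct and follows essentially the same route as the paper's: both use the annihilator $\tau - c(\tau)$ (the paper writes it as $\sigma - N\sigma$), apply the Integrality Property, and compare coefficients to obtain the congruence, then invoke coprimality of $c(\tau)$ with $w_K$ to conclude. The only cosmetic difference is that the paper first multiplies through by $w_K$ and works with the integer coefficients $a_\sigma = w_K c_\sigma$ modulo $w_K$, whereas you work directly with the rational coefficients $c_\sigma$ modulo $\mathbb{Z}$; these are equivalent formulations.
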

\begin{proof}
Write $w_K \theta_{K/k,S} (0) = \sum_{\sigma \in G} a_{\sigma} \sigma^{-1}$.  By the integrality property, the coefficients $a_{\sigma}$ are integers. For each $\sigma$ in $G$, let $N \sigma$ be an integer such that $\zeta^{\sigma} = \zeta^{N \sigma}$ for all roots of unity $\zeta$ in $K$.  The key observation is the congruence
\begin{equation*}
	a_{\tau \sigma} \equiv N \sigma \cdot a_{\tau} \pmod{ w_K},
\end{equation*}
which holds for all $\tau$, $\sigma$ in $G$.   Since $\sigma - N \sigma $ is in $\Ann_{\mathbb{Z}[G]} \mu_K$, the integrality property shows that $(\sigma - N \sigma) w_K \theta_{K/k, S} (0)$ is in $w_K \mathbb{Z}[G]$.  The congruence follows by considering the coefficient of $\tau^{-1}$ in $w_K \theta_{K/k, S} (0)$.  Because $N \sigma$ and $w_K$ are relatively prime, it follows that $\mathrm{gcd} (a_{\tau \sigma}, w_K) = \mathrm{gcd} (a_{\tau}, w_K)$ for all $\tau$, $\sigma$ in $G$.
\end{proof}

David Hayes conjectured that the denominators of the coefficients of $\theta_{K/k, S} (0)$ are influenced by the arithmetic of $K$.  His investigations \cite{dH2004}, \cite{dHman} inspired the formulation and proof of the following theorem \cite{bS2012a}:
\begin{theorem}\label{T:priortheorem}
Let $K/k$ be an Abelian extension with Galois group $G$ satisfying the Brumer Conjecture.  Choose an odd prime number $l$, and assume that the group $\mu_K \otimes \mathbb{Z}_{l}$ of $l$-power roots of unity in $K$ is $G$-cohomologically trivial and $G$-isomorphic with a submodule or quotient of $\Cl_K$.  Set $l^r = \left| \mu_K \otimes \mathbb{Z}_l \right|$.  Then
\begin{equation*}
	w_K \theta_{K/k, S} (0) \in l^r \mathbb{Z}[G],
\end{equation*}

\end{theorem}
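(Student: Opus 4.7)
The plan is to reduce the claim to showing that $\theta_{K/k,S}(0)$ is $l$-integral, i.e., that its image $\theta_l$ in $\mathbb{Q}_l[G]$ lies in $\mathbb{Z}_l[G]$. By Proposition \ref{P:samedenominators}, the common denominator of the coefficients of $\theta_{K/k,S}(0)$ divides $w_K = l^r m$ with $\gcd(l,m)=1$; combined with $l$-integrality, this denominator divides $m$, and hence $w_K \theta_{K/k,S}(0) = l^r \cdot (m\, \theta_{K/k,S}(0)) \in l^r \mathbb{Z}[G]$.

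To prove $l$-integrality, first tensor Brumer's Conjecture with $\mathbb{Z}_l$. Using flatness and the identification $\Ann_{\mathbb{Z}[G]} \mu_K \otimes_\mathbb{Z} \mathbb{Z}_l = \Ann_{\mathbb{Z}_l[G]} \mu_K^{(l)}$, where $\mu_K^{(l)} := \mu_K \otimes \mathbb{Z}_l$, one obtains
\[
\Ann_{\mathbb{Z}_l[G]} \mu_K^{(l)} \cdot \theta_l \;\subseteq\; \Ann_{\mathbb{Z}_l[G]}(\Cl_K \otimes \mathbb{Z}_l).
\]
Next, exploit the two hypotheses on $\mu_K^{(l)}$. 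Cohomological triviality provides a projective resolution of length at most $1$ over $\mathbb{Z}_l[G]$; because $\mu_K^{(l)}$ is finite, absorbing a common free summand yields a balanced free resolution
\[
0 \to \mathbb{Z}_l[G]^n \xrightarrow{\phi} \mathbb{Z}_l[G]^n \to \mu_K^{(l)} \to 0.
\]
Since $\mu_K^{(l)}$ is cyclic as an abelian group and therefore as a $\mathbb{Z}_l[G]$-module, one has $\Fit_{\mathbb{Z}_l[G]} \mu_K^{(l)} = \Ann_{\mathbb{Z}_l[G]} \mu_K^{(l)}$, and the balanced resolution identifies this common ideal with the principal ideal $(f)$, where $f := \det \phi$. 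Moreover $\phi \otimes \mathbb{Q}_l$ is an isomorphism (injective between $\mathbb{Q}_l[G]$-modules of equal rank with trivial cokernel), so $f$ is a unit in $\mathbb{Q}_l[G]$, hence a non-zero-divisor in $\mathbb{Z}_l[G]$.

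Finally, either hypothesis --- that $\mu_K^{(l)}$ is $G$-isomorphic to a submodule or to a quotient of $\Cl_K$ --- gives $\Ann_{\mathbb{Z}_l[G]}(\Cl_K \otimes \mathbb{Z}_l) \subseteq \Ann_{\mathbb{Z}_l[G]} \mu_K^{(l)} = (f)$. Combined with the $l$-local Brumer containment, $f \theta_l \in (f)$, so $f \theta_l = f g$ for some $g \in \mathbb{Z}_l[G]$; cancelling the non-zero-divisor $f$ yields $\theta_l = g \in \mathbb{Z}_l[G]$. The main obstacle is the structural step that identifies $\Ann_{\mathbb{Z}_l[G]} \mu_K^{(l)}$ as a principal ideal generated by a non-zero-divisor: this is where cyclicity (making $\Fit = \Ann$) and cohomological triviality (supplying the balanced free resolution) are used together to convert Brumer's containment into outright divisibility of $\theta$.
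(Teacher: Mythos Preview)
The paper does not prove this theorem: it is quoted as a result from the author's earlier paper \cite{bS2012a}, so there is no proof here to compare against. That said, your argument is essentially the standard one and is correct in outline.

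One step deserves a cleaner justification. You assert that from a length-one projective resolution of $\mu_K^{(l)}$ over $\mathbb{Z}_l[G]$ you can pass, ``by absorbing a common free summand,'' to a \emph{free} resolution $0\to \mathbb{Z}_l[G]^n\to \mathbb{Z}_l[G]^n\to \mu_K^{(l)}\to 0$. Adding a complement to make $P_0$ free is easy, but the fact that $P_1$ (after the same addition) is then \emph{free} of the same rank is not automatic for arbitrary rings; it uses that $\mathbb{Z}_l[G]$, for $G$ finite abelian, decomposes as a finite product of \emph{local} rings $\mathcal{O}_i[G_l]$ (write $G=G_l\times G'$ with $l\nmid |G'|$ and split $\mathbb{Z}_l[G']$ as a product of unramified extensions of $\mathbb{Z}_l$). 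Over a product of local rings every finitely generated projective module is free once its rank is constant on components, and finiteness of $\mu_K^{(l)}$ forces $P_1\otimes\mathbb{Q}_l\cong P_0\otimes\mathbb{Q}_l$, giving equal ranks. In fact you can bypass the general $n$ entirely: since $\mu_K^{(l)}$ is cyclic you may take $P_0=\mathbb{Z}_l[G]$, so that $P_1=\Ann_{\mathbb{Z}_l[G]}\mu_K^{(l)}$ is $\mathbb{Z}_l$-torsion-free and cohomologically trivial, hence projective, hence (by the product-of-local-rings argument) free of rank one. This gives directly that $\Ann_{\mathbb{Z}_l[G]}\mu_K^{(l)}=(f)$ with $f$ a non-zero-divisor, and the rest of your cancellation argument goes through unchanged.
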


Unless $k$ is totally real and $K$ is totally complex, $\theta_{K/k, S} (0)$ is zero and the theorem is trivial.  This article is a case study of the smallest totally complex extensions of totally real fields for which $\mu_{K} \otimes \mathbb{Z}_l$ is not cohomologically trivial ---  those $K/k$ of degree $2p$ such that $\left[ k (\zeta_{p^r}) \colon k \right] = 2$ in which $\zeta_{p^r}$ is a generator of the group of $p$-power roots of unity in $K$.  We will see that the divisibility properties of $w_K \theta_{K/k, S} (0)$ for most such extensions are regulated by the arithmetic of the extension $K/k$ in a simple way (Theorems \ref{T:simplecase} and \ref{T:exceptionalsimplecase}).  But for the TK extensions, defined above, the regulation is much more sophisticated.

Our result  is contingent on the $p$-local Brumer-Stark conjecture, which we now state. We call an element $\alpha$ of $K^{\times}$ an anti-unit if $\left| \phi(\alpha) \right|=1$ for each embedding ${\phi \colon K \hookrightarrow \mathbb{C}}$. If $n$ is a divisor of $w_K$, we call an element $\alpha$ in $K$ $n$-Abelian (for $K/k$) if the extension $K\left(\sqrt[n]{\alpha}\right)/k$ is Abelian. 
\begin{BSconj}
Let $\theta = \theta_{K/k, S}(0)$ be the value at $0$ of the $S$-imprimitive equivariant $L$-function of an Abelian extension $K/k$ of number fields.  If $\mathfrak{a}$ is an integral ideal in $K$, then there exists an element $\varepsilon_{\mathfrak{a}, K/k, S}$ in $K^{\times}$ satisfying:
\begin{enumerate}
	\item $\mathfrak{a}^{w_K \theta} = \left( \varepsilon_{\mathfrak{a}, K/k, S} \right)$
	\item $\varepsilon_{\mathfrak{a}, K/k, S}$ is an anti-unit
	\item $\varepsilon_{\mathfrak{a}, K/k, S}$ is $w_K$-Abelian.
\end{enumerate}
\end{BSconj}

The Brumer and Brumer-Stark conjectures have local statements for each prime number $l$, and each is equivalent to the aggregate of its local conjectures.  Let $l$ be a prime number and set $l^r = \left| \mu_K \otimes \mathbb{Z}_l \right|$.  The $l$-local Brumer-Stark conjecture is obtained by making the following modifications to the Brumer-Stark conjecture:  the ideal $\mathfrak{a}$ is chosen to represent a class in $\Cl_K \otimes \mathbb{Z}_l$, and we only require $\varepsilon_{\mathfrak{a}, K/k, S}$ to be $l^r$-Abelian.

For odd primes $l$, Popescu proved the $l$-local Brumer-Stark and $l$-local Brumer conjectures are equivalent when the $l$-power roots of unity in $K$ are cohomologically trivial.  It is therefore natural that an attempt to generalize Theorem \ref{T:priortheorem} to extensions whose roots of unity have nontrivial cohomology requires the Brumer-Stark conjecture.

We now specialize to degree $2p$ extensions and define new notation.  Let $p$ be an odd prime number and fix a cyclic extension $K_1/k_0$ of degree $2p$ with $k_0$ totally real and $K_1$ totally complex.  The unique intermediate fields $K_0$ and $k_1$ between $k_0$ and $K_1$ with $\left[ K_0 \colon k_0 \right] = 2$ and $\left[ k_1 \colon k_0 \right] = p$ are respectively totally complex and totally real.  Denote the Galois group of $K_1/k_0$ by $G$. Let $S^{\mathrm{min}}$ be the set of places of $k_0$ consisting of the Archimedean places and the prime ideals that ramify in $K_1/k_0$. Let $\mu_{K_0}$ and $\mu_{K_1}$ be the groups of roots of unity in $K_0$ and $K_1$, and set $w_0 = \left| \mu_{K_0} \right|$ and $w_1 = \left| \mu_{K_1} \right|$.

If $K$ is one of the fields mentioned above, let $S_{K}^{\ns}$ be the set of places in $K$ above those in $S^{\mathrm{min}}$ that do not split in $K_{0}/k_{0}$. Let $\mathfrak{C}_{0}$ and $\mathfrak{C}_{1}$ denote the cokernels of the canonical maps of $S$ class groups $\Cl_{k_{0}, S_{k_{0}}^{\ns}} \rightarrow \Cl_{K_{0}, S_{K_{0}}^{\ns}}$ and $\Cl_{k_{1}, S_{k_{1}}^{\ns}} \rightarrow \Cl_{K_{1}, S_{K_{1}}^{\ns}}$.  Proposition 2.2 in \cite{bS2012b} shows that the norm map $N \colon \mathfrak{C}_1 \rightarrow \mathfrak{C}_0$ is surjective.
We denote its kernel by $\mathfrak{K}$.  For simplicity, we also write
\begin{equation*}
	\mathfrak{C}_{0,p} = \mathfrak{C}_0 \otimes \mathbb{Z}_p, \quad
	\mathfrak{C}_{1,p} = \mathfrak{C}_1 \otimes \mathbb{Z}_p, \quad
	\mathfrak{K}_p = \mathfrak{K} \otimes \mathbb{Z}_p
\end{equation*}
Lemma 2.1 in \cite{bS2012b} shows that for $i=0$ or $1$, $\mathfrak{C}_{i,p}$ is isomorphic to $\left( \Cl_{K_i} \otimes \mathbb{Z}_p \right)^-$, the component of the $p$-primary part of the class group of $K_i$ upon which complex conjugation acts by inversion.  Further, one can deduce that through these isomorphisms that $\mathfrak{K}_p$ is isomorphic to the kernel of the norm map $N \colon \left( \Cl_{K_1} \otimes \mathbb{Z}_p \right)^- \rightarrow \left( \Cl_{K_0} \otimes \mathbb{Z}_p \right)^-$.  Thus, for TK extensions, $\mathfrak{K}_p$ is trivial.

Recall that $\varepsilon_{\mathfrak{a}, K_1/k_1, S_{k_1}^{\mathrm{ns}}}$ denotes the element corresponding to the ideal $\mathfrak{a}$ through the Brumer-Stark conjecture for the quadratic extension $K_1/k_1$ and $S_{k_1}^{\mathrm{ns}}$. Our main result is:\\[0.1cm]

\begin{theorem}\label{T:main}
Let $K_1/k_0$ be a TK extension for which the $p$-local Brumer-Stark Conjecture holds.  Set $\left| \mu_{K_1} \otimes \mathbb{Z}_p \right| = p^r$.  The following are equivalent:
\begin{enumerate}
	\item $w_{1} \theta_{K_1/k_0, S^{\mathrm{min}}} (0)$ is in $p^r \mathbb{Z}[G]$
	\item As $\mathfrak{a}$ runs through classes in $\left( \Cl_{K_1} \otimes \mathbb{Z}_p \right)^{-}$, the elements $\varepsilon_{\mathfrak{a}, K_1/k_1, S_{k_1}^{\mathrm{ns}}}$ can be chosen so that their $p^r$th roots generate a cyclic extension of $k_0$ of degree $2p^{r+1}$, unramified away from $p$ and linearly disjoint over $k_0$ from the cyclotomic $\mathbb{Z}_p$-extension.
\end{enumerate} 
For conditions (i) and (ii) to hold, it is necessary that there exist $G$-isomorphisms
\begin{equation*}
	\left( \Cl_{K_0} \otimes \mathbb{Z}_p \right)^- \cong \left( \Cl_{K_1} \otimes \mathbb{Z}_p \right)^- \cong \mu_{K_1} \otimes \mathbb{Z}_p
\end{equation*}
\end{theorem}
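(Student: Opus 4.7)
The plan is to invoke Theorem~\ref{T:bsharprank1} from the introduction --- the cohomological reformulation of the $p$-local Brumer-Stark Conjecture for TK extensions --- and combine it with Kummer theory to translate the $L$-value divisibility in (i) into the field-theoretic assertion (ii). A preliminary step will establish a cross-comparison between the Brumer-Stark elements for the two extensions $K_1/k_0$ and $K_1/k_1$: decomposing $G = \Delta \times \Gamma$ with $|\Delta|=2$ and $|\Gamma|=p$ and invoking inductivity of Artin $L$-functions yields a multiplicative relation linking $\varepsilon_{\mathfrak{a}, K_1/k_0, S^{\mathrm{min}}}$ (whose existence is granted by the $p$-local Brumer-Stark hypothesis) to $\varepsilon_{\mathfrak{a}, K_1/k_1, S_{k_1}^{\ns}}$, modulo roots of unity. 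The TK hypothesis $\mathfrak{K}_p = 0$ is essential here, tightening this comparison by forcing the norm map on minus class groups to be an isomorphism.

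For (i)$\Rightarrow$(ii), assuming (i) and the $p$-local Brumer-Stark for $K_1/k_0$, the element $\varepsilon_{\mathfrak{a}, K_1/k_0}$ is $p^r$-Abelian for $K_1/k_0$. I would then show, using the comparison above, that the extension generated over $k_0$ by the $p^r$-th roots of $\varepsilon_{\mathfrak{a}, K_1/k_1, S_{k_1}^{\ns}}$ is cyclic of the predicted degree $2p^{r+1}$, with ramification confined to places above $p$ (using Proposition~\ref{P:trivialproperties}, that TK extensions are unramified away from $p$), and is linearly disjoint from the cyclotomic $\mathbb{Z}_p$-extension of $k_0$ (using the TK hypothesis that $k_1/k_0$ is distinct from, and thus disjoint from, the first layer of that tower). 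For (ii)$\Rightarrow$(i), the hypothesized cyclic extension furnishes an explicit radical whose Kummer-theoretic properties witness the vanishing of the cohomology class appearing in Theorem~\ref{T:bsharprank1}, and this vanishing in turn forces the $p^r$-divisibility of $w_{1}\theta_{K_1/k_0, S^{\mathrm{min}}}(0)$.

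The necessary $G$-isomorphisms can then be read off: the isomorphism $(\Cl_{K_0}\otimes\mathbb{Z}_p)^- \cong (\Cl_{K_1}\otimes\mathbb{Z}_p)^-$ is immediate from the TK condition $\mathfrak{K}_p = 0$ combined with surjectivity of the norm from Proposition~2.2 of \cite{bS2012b}, while the identification $(\Cl_{K_1}\otimes\mathbb{Z}_p)^- \cong \mu_{K_1}\otimes\mathbb{Z}_p$ emerges from the Kummer pairing: classes in $(\Cl_{K_1}\otimes\mathbb{Z}_p)^-$ parameterize generators of the cyclic extension in (ii), and the Galois group acts on these generators through a $\mu_{K_1}\otimes\mathbb{Z}_p$-valued character whose nondegeneracy follows from the extension having its full predicted degree.

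The main obstacle I foresee is in the forward direction: one must verify that the radical extension over $k_0$ has exactly the claimed degree and structural properties, not a proper divisor. Since the Brumer-Stark elements are determined only up to roots of unity, one needs a coherent choice of the $\varepsilon_{\mathfrak{a}, K_1/k_1, S_{k_1}^{\ns}}$ as $\mathfrak{a}$ varies, and ruling out dropping of degree or unwanted ramification will demand delicate Kummer-theoretic bookkeeping, particularly at primes above $p$.
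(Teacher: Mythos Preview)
Your overall strategy---invoking Theorem~\ref{T:bsharprank1} and translating between its cohomological condition and Kummer-theoretic statements---matches the paper's. But there is a genuine gap in your direction (ii)$\Rightarrow$(i), and a difference in route worth noting.

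\textbf{The gap.} You write that the cyclic extension in (ii) ``witnesses the vanishing of the cohomology class in Theorem~\ref{T:bsharprank1}, and this vanishing in turn forces the $p^r$-divisibility.'' This conflates two things. The vanishing of the class in Theorem~\ref{T:bsharprank1} is \emph{equivalent} to the $p$-local Brumer--Stark conjecture, which you are already assuming; it does not by itself yield (i). What condition (ii) actually buys you is that the \emph{Kummer symbol} $(\sigma,\omega(\mathfrak{a}))_{\Kum}$ vanishes (since the extension is Abelian over $K_0$). The cohomological relation in Theorem~\ref{T:bsharprank1}, however, involves both $(\sigma,\omega)_{\Kum}$ and a second symbol $(\sigma,\omega)_{\N}^{p^{r-1}}$, and in the case $t=r$ reads
\[
(\sigma,\omega)_{\Kum}\,\bigl[(\sigma,\omega)_{\N}^{p^{r-1}}\bigr]^{(b_0+b_1)/b_0}=1.
\]
Knowing $(\sigma,\omega)_{\Kum}=1$ reduces this to $\bigl[(\sigma,\omega)_{\N}^{p^{r-1}}\bigr]^{(b_0+b_1)/b_0}=1$, but to conclude that $(b_0+b_1)/b_0\in p\mathbb{Z}_p$ (i.e., that (i) holds) you must know the norm symbol $(\sigma,\omega)_{\N}^{p^{r-1}}$ is \emph{nontrivial}. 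This is the content of Lemma~\ref{L:trivialnormsymbol}, proved via Hilbert~90 and the fact that $N_{K_1/K_0}\omega(\mathfrak{a})$ cannot be a $p$th power in $K_0$ (since $N_{K_1/K_0}\mathfrak{a}$ has full order in $\mathfrak{C}_{0,p}$). The same nonvanishing is what rules out $t>r$. Your proposal does not mention the norm symbol at all, and without it the implication (ii)$\Rightarrow$(i) does not close.

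\textbf{Difference in route.} Rather than comparing $\varepsilon_{\mathfrak{a},K_1/k_0}$ and $\varepsilon_{\mathfrak{a},K_1/k_1}$ via inductivity of $L$-functions, the paper introduces a single auxiliary anti-unit $\omega(\mathfrak{a})$ generating $\mathfrak{a}^{p^t(1-\tau)}$ (Lemma~\ref{L:modifiedradical}) and shows that \emph{both} Brumer--Stark elements are, up to roots of unity, powers of $\omega(\mathfrak{a})$ with exponent prime to $p$ (using Tate's formula for the quadratic $L$-value). All Kummer-theoretic analysis is then done on $\omega(\mathfrak{a})$; in particular, the delicate point you flag---that the extension over $K_0$ is genuinely cyclic of degree $p^{r+1}$ rather than $\mathbb{Z}/p^r\times\mathbb{Z}/p$---is handled by a separate Claim argued by contradiction using the class group. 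Your inductivity comparison may be workable, but it would still need the norm-symbol nonvanishing and the cyclicity claim as independent inputs.

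Finally, your derivation of the necessary isomorphism $(\Cl_{K_1}\otimes\mathbb{Z}_p)^-\cong\mu_{K_1}\otimes\mathbb{Z}_p$ via ``nondegeneracy of the Kummer pairing'' is vague. The paper obtains it more directly: condition (i) forces the $p$-valuations of the coefficients of $\theta_0$ and $\theta_1$ to cancel, and Propositions~\ref{P:theta0divisibility} and~\ref{P:theta1divisibility} then pin down $|\mathfrak{C}_{0,p}|=p^r$.
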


\begin{remark}
It is easier to show the equivalence of condition (i) with condition (iii): the elements $\varepsilon_{\mathfrak{a}, K_1/k_0, S^{\mathrm{min}}}$ can be chosen so that their $p^r$th roots generate a cyclic extension of $k_0$ of degree $2p^{r+1}$, unramified away from $p$ and linearly disjoint over $k_0$ from the cyclotomic $\mathbb{Z}_p$-extension (see Theorem \ref{T:maintheoremalternate}).  Condition (ii) states instead that $\varepsilon_{\mathfrak{a}, K_1/k_1, S_{k_1}^{\mathrm{ns}}}$, which the Brumer-Stark conjecture says is $p^r$-Abelian for $K_1/k_1$, is actually $p^r$-Abelian for $K_1/k_0$.
\end{remark}

When $K_1/k_0$ is a TKNS extension, we will show in Corollary \ref{C:nosplitconditions} that the conditions of Theorem \ref{T:main}  are equivalent to condition (iv): there exists a cyclic extension $L/k_0$ of degree $p^{r+1}$, unramified away from $p$ and linearly disjoint over $k_0$ from the cyclotomic $\mathbb{Z}_p$ extension. In addition, $L \left( \zeta_{p^{2r+1}} \right)^+$ is then the maximal Abelian exponent $p^{r+1}$ extension of $k_0$ unramified away from $p$.  Finally, if $K_1/k_0$ is a TKNS extension and $\left( \Cl_{K_0} \otimes \mathbb{Z}_p \right)^- \not\cong \mu_{K_0} \otimes \mathbb{Z}_p$, we prove in Corollary \ref{C:Leopoldt} the $p$-primary Leopoldt's conjecture for $k_0$: the only $\mathbb{Z}_p$ extension of $k_0$ is the cyclotomic one.

Theorem \ref{T:main} is a consequence of the equivalence of the $p$-local Brumer-Stark conjecture with the vanishing of a cohomology class in $H^1 \left( \Gal(K_1/K_0),\mu_{K_1} \otimes \mathbb{Z}_p \right)$ whose definition involves the partial zeta function values $\zeta_{K_1/k_0, S^{\mathrm{min}}} (\sigma, 0)$ (see Theorem \ref{T:bsharprank1}).     

Examples of TK and TKNS extensions and computations illustrating the main results for these examples are provided in Section \ref{S:examples}.

\section{The simple case: non-TK extensions}\label{S:simplecase}

The remainder of this article makes frequent use of results from Sections 2 and 3 of \cite{bS2012b}.  The reader is invited to read these sections now.

We recall again the notation introduced in Section \ref{S:intro}.  Fix an odd prime number $p$ and a cyclic extension $K_1/k_0$ of degree $2p$ with $k_0$ totally real and $K_1$ totally complex. The unique intermediate fields $K_0$ and $k_1$ between $k_0$ and $K_1$ with $\left[ K_0 \colon k_0 \right] = 2$ and $\left[ k_1 \colon k_1 \right] = p$ are respectively totally complex and totally real.  Denote the Galois group of $K_1/k_0$ by $G$.  Let $S^{\mathrm{min}}$ be the set of places of $k_0$ consisting of the Archimedean places and the prime ideals that ramify in $K_1/k_0$. Let $\mu_{K_0}$ and $\mu_{K_1}$ be the groups of roots of unity in $K_0$ and $K_1$. 

If $K$ is one of the fields mentioned above, let $S_{K}^{\ns}$ be the set of places in $K$ above those in $S^{\mathrm{min}}$ that do not split in $K_{0}/k_{0}$. Let $\mathfrak{C}_{0}$ and $\mathfrak{C}_{1}$ denote the cokernels of the canonical maps of $S$ class groups $\Cl_{k_{0}, S_{k_{0}}^{\ns}} \rightarrow \Cl_{K_{0}, S_{K_{0}}^{\ns}}$ and $\Cl_{k_{1}, S_{k_{1}}^{\ns}} \rightarrow \Cl_{K_{1}, S_{K_{1}}^{\ns}}$.  
We let $\mathfrak{K}$ denote the kernel of the norm map $N \colon \mathfrak{C}_1 \rightarrow \mathfrak{C}_0$.  For simplicity, we also write $\mathfrak{C}_{0,p} = \mathfrak{C}_0 \otimes \mathbb{Z}_p$, $\mathfrak{C}_{1,p} = \mathfrak{C}_1 \otimes \mathbb{Z}_p$, and $\mathfrak{K}_p = \mathfrak{K} \otimes \mathbb{Z}_p$. 

We introduce new notation as well. Let $\tau$ and $\sigma$ be elements of orders $2$ and $p$ in $G$ (so that $\tau$ is complex conjugation).  Let $N_H$ be the norm element in $\mathbb{Z}[G]$ corresponding to $H = \Gal(K_1/K_0)$. Let $\chi$ be a generator of the group of character group $\widehat{G}$.   For $i=0$ or $1$, we write $\left( \Cl_{K_i} \otimes \mathbb{Z}_p \right)^-$ for the component of $\Cl_{K_i}$ upon which $\tau$ acts by inversion.

Set $w_0 = \left| \mu_{K_0} \right|$, $w_1 = \left| \mu_{K_1} \right|$, and  $p^r = \left| \mu_{K_1} \otimes \mathbb{Z}_p \right|$. Let $\zeta_{p^r}$ be a generator of the group of $p$-power roots of unity in $K_1$. Let $q = \frac{w_1}{w_0}$, and note that the exact power of $p$ dividing $q$ is either $p^0$ or $p^1$.  If $p$ divides $w_0$, then the power is $p^1$ precisely when the $p$-power roots of unity in $K_1$ are $G$-cohomologically trivial.  That is, the group of $p$-power roots of unity in $K_1$ is not $G$-cohomologically trivial if and only if $K_0 = k_0 (\zeta_{p^r})$.

Replacing $S^{\mathrm{min}}$ with a larger set of places only produces weaker results.  Therefore, we will only consider $S^{\mathrm{min}}$-imprimitive $L$-function values in what follows.

Set $\theta = \theta_{K_1/k_0, S^{\mathrm{min}}} (0)$ and split it into two pieces,
\begin{equation*}
	\theta_{0} = L_{K_{1}/k_{0}, S^{\mathrm{min}}} \left( 0, \chi^{p} \right) e_{\chi^{p}}
\end{equation*}
and
\begin{equation*}
	\theta_{1} = \sum_{\substack{i=1\\
							i \text{ odd }\\
							i \neq p}}^{2p -1}
				L_{K_{1}/k_{0}, S^{\mathrm{min}}} \left( 0, \overline{\chi}^{i} \right) e_{\chi^{i}},
\end{equation*}
in which $e_{\chi^i}$ denotes the idempotent corresponding to $\chi^i$.  The value $L_{K_{1}/k_{0}, S^{\mathrm{min}}} \left( 0, \chi^{i} \right)$ is $0$ when $i$ is even (including when $i=0$ since $|S^{\mathrm{min}}| \geq 2$), so
$\theta = \theta_{0} + \theta_{1}.$  

\begin{proposition}\label{P:theta0divisibility}
Let $K_1/k_0$ be a degree $2p$ cyclic extension with Galois group $G$.   If some place in $S^{\mathrm{min}}$ splits in $K_0/k_0$, then $\theta_0 = 0$. Otherwise,
\begin{equation*}
	\theta_0 = \frac{1}{p} \tilde{\theta} N_H,
\end{equation*}
in which $\tilde{\theta}$ is the lift of $\theta_{K_0/k_0, S^{\mathrm{min}}}$ to $\mathbb{Z}[G]$ given by
\begin{equation*}
 \tilde{\theta} = \frac{1}{w_0} 2^{\left| S^{\mathrm{min}} \right|-2} \left| \mathfrak{C}_0
	\right| \left( 1 - \tau \right).
\end{equation*}

\end{proposition}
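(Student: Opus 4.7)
The plan is to extract the $\chi^{p}$-component of $\theta$ and reduce to a computation about the quadratic subextension $K_0/k_0$. Because $\chi^{p}$ has order $2$ and is the inflation to $G$ of the nontrivial character $\psi$ of $\Gal(K_0/k_0)$, and because the Euler factor at a prime $\mathfrak{q}\notin S^{\mathrm{min}}$ depends only on the image of its Frobenius in the quotient, one has the identity
$$L_{K_1/k_0,S^{\mathrm{min}}}(s,\chi^{p})=L_{K_0/k_0,S^{\mathrm{min}}}(s,\psi).$$
A short calculation in $\mathbb{Q}[G]$, using that $\chi^{p}$ is trivial on $H$ and equals $-1$ on $\tau H$, gives $e_{\chi^{p}}=\frac{1}{2p}N_{H}(1-\tau)$. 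Since $\overline{\chi}^{p}=\chi^{p}$, these combine to
$$\theta_{0}=\frac{L_{K_0/k_0,S^{\mathrm{min}}}(0,\psi)}{2p}\,N_{H}(1-\tau),$$
so the proposition reduces to evaluating this $L$-value in each of the two cases.

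For the first assertion, suppose $\mathfrak{p}\in S^{\mathrm{min}}$ splits in $K_0/k_0$. Letting $T$ denote the minimal ramification set for $K_0/k_0$, the Euler-factor factorization
$$L_{K_0/k_0,S^{\mathrm{min}}}(s,\psi)=L_{K_0/k_0,T}(s,\psi)\prod_{\mathfrak{q}\in S^{\mathrm{min}}\setminus T}\bigl(1-\psi(\sigma_{\mathfrak{q}})N\mathfrak{q}^{-s}\bigr)$$
isolates the factor $1-N\mathfrak{p}^{-s}$ at $\mathfrak{p}$ (since $\sigma_{\mathfrak{p}}=1$), which vanishes at $s=0$. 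The remaining Euler factors and the value $L_{K_0/k_0,T}(0,\psi)$ are finite, so $L_{K_0/k_0,S^{\mathrm{min}}}(0,\psi)=0$ and $\theta_{0}=0$.

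For the second assertion, assume no place of $S^{\mathrm{min}}$ splits in $K_0/k_0$. Then the set of places of $K_0$ lying over $S^{\mathrm{min}}$ is precisely $S_{K_0}^{\ns}$ and has cardinality $|S^{\mathrm{min}}|$; both $\zeta_{k_0,S^{\mathrm{min}}}$ and $\zeta_{K_0,S_{K_0}^{\ns}}$ then have order of vanishing $|S^{\mathrm{min}}|-1$ at $s=0$. Applying the analytic class number formula to the leading terms and dividing yields
$$L_{K_0/k_0,S^{\mathrm{min}}}(0,\psi)=\frac{2}{w_0}\cdot\frac{h_{K_0,S_{K_0}^{\ns}}\,R_{K_0,S_{K_0}^{\ns}}}{h_{k_0,S^{\mathrm{min}}}\,R_{k_0,S^{\mathrm{min}}}}.$$
A standard $S$-regulator comparison for a CM pair (Hasse's unit-index formula adapted to $S$-units in the non-split setting), together with the exact sequence identifying $\mathfrak{C}_0$ as the cokernel of $\Cl_{k_0,S^{\mathrm{min}}}\to\Cl_{K_0,S_{K_0}^{\ns}}$ from Section~2 of \cite{bS2012b}, collapses the right-hand side to $2^{|S^{\mathrm{min}}|-1}|\mathfrak{C}_0|/w_0$. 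Substituting into the formula for $\theta_{0}$ yields $\theta_{0}=\frac{1}{p}\tilde\theta\,N_{H}$.

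The main obstacle will be the final collapse in the second case. The $S$-unit index for a CM pair takes the value $1$ or $2$, the capitulation map $\Cl_{k_0,S^{\mathrm{min}}}\to\Cl_{K_0,S_{K_0}^{\ns}}$ has a possibly nontrivial kernel, and the order of $\mathfrak{C}_0$ enters as the cokernel of the same map. All three contributions must balance exactly to produce the factor $2^{|S^{\mathrm{min}}|-2}$. I expect this accounting to follow from Hasse's classical theorem for $S$-units in CM extensions combined with the cokernel setup of \cite{bS2012b}, but it is the step that most needs careful execution.
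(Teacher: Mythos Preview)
Your reduction is exactly the paper's: identify $L_{K_1/k_0,S^{\mathrm{min}}}(s,\chi^p)$ with $L_{K_0/k_0,S^{\mathrm{min}}}(s,\psi)$ via inflation, compute $e_{\chi^p}=\frac{1}{2p}N_H(1-\tau)$, and reduce everything to evaluating $L_{K_0/k_0,S^{\mathrm{min}}}(0,\psi)$. The split case is also the same in substance; the paper simply attributes the vanishing to Tate, while you spell out the Euler-factor argument.

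The divergence is in the non-split case. The paper does \emph{not} compute the $L$-value: it quotes Tate \cite[\S 3, (c)]{jT1980} for the closed formula
\[
L_{K_0/k_0,S^{\mathrm{min}}}(0,\psi)=\frac{2^{|S^{\mathrm{min}}|-1}\,|\mathfrak{C}_0|}{w_0}
\]
and substitutes. You instead propose to re-derive this formula from the $S$-class-number formula plus an $S$-unit index/regulator comparison for the CM pair $K_0/k_0$. That route is legitimate---it is essentially how Tate obtains the formula---but, as you yourself flag, the ``final collapse'' is not carried out: you have not actually shown that the unit index $Q=[U_{K_0,S_{K_0}^{\ns}}:\mu_{K_0}U_{k_0,S^{\mathrm{min}}}]\in\{1,2\}$, the capitulation kernel, and the regulator ratio combine to give exactly $2^{|S^{\mathrm{min}}|-2}|\mathfrak{C}_0|$. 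Until that accounting is written down, the proposal has a genuine gap at precisely the point where the paper invokes an external reference. The simplest fix is to do what the paper does and cite Tate; if you prefer a self-contained argument, you must actually execute the index computation (the key input being that no place in $S^{\mathrm{min}}$ splits forces $U_{K_0,S_{K_0}^{\ns}}^{-}=\mu_{K_0}$, whence the rank match and the regulator ratio $2^{|S^{\mathrm{min}}|-1}/Q$, after which the class-group pieces assemble into $Q\cdot|\mathfrak{C}_0|/2$).
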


\begin{proof}
The inflation property of ($S^{\mathrm{min}}$-imprimitive) Artin $L$-functions shows that $L_{K_1/k_0, S^{\mathrm{min}}} \left( 0, \chi^p \right) = L_{K_0/k_0, S^{\mathrm{min}}} \left( 0, \psi \right)$, where $\psi$ is the nontrivial character on $\Gal (K_0/k_0)$.  Combining this with the definition of $\theta_0$ yields
\begin{equation*}
	\theta_0 = L_{K_0/k_0, S^{\mathrm{min}}} (0, \psi) \frac{N_H}{p} \frac{1-\tau}{2}
\end{equation*}
The formula $\theta_0 = \frac{1}{p} \tilde{\theta} N_H$ then follows from Tate's determination \cite[\S 3, (c)]{jT1980} that $L_{K_0/k_0, S^{\mathrm{min}}} (0, \psi) = 0$ when some prime of $S^{\mathrm{min}}$ splits in $K_0/k_0$ and otherwise
\begin{equation}\label{E:quadL}
	L_{K_0/k_0, S^{\mathrm{min}}} (0, \psi) = \frac{2^{\left| S^{\mathrm{min}} \right|-1} \left| \mathfrak{C}_0
	\right|}{w_0}.
\end{equation} 
\end{proof}

The preceding proposition shows that $\theta_{0}$ is in $\mathbb{Q} \left[ G \right]$; it follows that $\theta_{1}$ is in  $\mathbb{Q} \left[ G \right]$ as well. 

\begin{lemma}\label{L:theta1properties}
$\theta_1$ has the following properties:
\begin{enumerate}
	\item $N_H \theta_1 = 0$
	\item $\theta_1$ is in $\frac{1}{pq} \mathbb{Z}[G]$
\end{enumerate}
\end{lemma}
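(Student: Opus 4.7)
For (i), the plan is to check termwise that $N_H$ kills each summand of the character expansion of $\theta_1$. Every index $i$ appearing in $\theta_1$ is odd and distinct from $p$, so $\chi^i$ is a faithful character of $G$; in particular its restriction to $H = \langle \sigma \rangle$ is a nontrivial character of the cyclic group of order $p$. The identity
\begin{equation*}
    N_H e_{\chi^i} \;=\; \Bigl(\sum_{h \in H} \chi^i(h)\Bigr) e_{\chi^i}
\end{equation*}
together with the orthogonality relations yields $N_H e_{\chi^i} = 0$, whence $N_H \theta_1 = 0$ by linearity.

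For (ii), the plan is to extract a closed-form identity for $p\theta_1$ from (i) and then reduce the result to the integrality property stated in the introduction. The character $\chi^p$ is the unique element in the support of $\theta$ whose restriction to $H$ is trivial, and the same orthogonality computation produces $N_H e_{\chi^p} = p\,e_{\chi^p}$, so $N_H\theta = p\theta_0$. Combined with (i) this gives
\begin{equation*}
    p\theta_1 \;=\; p\theta - p\theta_0 \;=\; (p - N_H)\theta,
\end{equation*}
whence $pq\theta_1 = q(p - N_H)\theta$. By the integrality property, showing that this lies in $\mathbb{Z}[G]$ reduces to verifying that $q(p - N_H) \in \Ann_{\mathbb{Z}[G]} \mu_{K_1}$.

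The crux of the argument, and the step I would emphasize, is that the $q$th power map sends $\mu_{K_1}$ into $\mu_{K_0}$: indeed, $\mu_{K_1}^q$ is the unique subgroup of the cyclic group $\mu_{K_1}$ of order $w_1/q = w_0$, and therefore equals $\mu_{K_0}$. Since $H = \Gal(K_1/K_0)$ fixes $\mu_{K_0}$ pointwise, for any $\zeta \in \mu_{K_1}$ we have $N_H(\zeta^q) = (\zeta^q)^{|H|} = \zeta^{pq}$, so
\begin{equation*}
    q(p - N_H)(\zeta) \;=\; \zeta^{pq} \cdot N_H(\zeta)^{-q} \;=\; \zeta^{pq} \cdot N_H(\zeta^q)^{-1} \;=\; 1.
\end{equation*}
I do not foresee any serious obstacle: once the identity $p\theta_1 = (p - N_H)\theta$ is in hand, the observation that raising to the $q$th power pushes every root of unity into the $H$-invariant subgroup $\mu_{K_0}$ is the whole content of (ii), and the integrality property takes care of the rest.
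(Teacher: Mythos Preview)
Your proof is correct and follows essentially the same approach as the paper: for (i) both arguments use orthogonality to kill $N_H e_{\chi^i}$, and for (ii) both establish the identity $pq\theta_1 = q(p-N_H)\theta$ and then invoke the integrality property. Your execution is slightly more streamlined in two places: you verify $q(p-N_H)\in\Ann_{\mathbb{Z}[G]}\mu_{K_1}$ directly via $\mu_{K_1}^q=\mu_{K_0}\subset K_1^H$, whereas the paper factors $p-N_H=\prod_{i=1}^{p-1}(1-\sigma^i)$ and uses that $(1-\sigma)q$ annihilates; and you obtain $N_H\theta=p\theta_0$ from the character expansion alone, while the paper routes this through Proposition~\ref{P:theta0divisibility}.
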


\begin{proof}
The orthogonality relations imply that $N_H e_{\chi^i} = 0$ unless $i$ is divisible by $p$.  The first condition thus results from the definition of $\theta_1$.

The integrality property of $\theta = \theta_{K_1/k_0, S^{\mathrm{min}}} (0)$ shows that $(1-\sigma) q \theta$ is in $\mathbb{Z}[G]$; hence, so is $\left( \prod_{i=1}^{p-1} (1-\sigma^i) \right) q \theta$.  Property (ii) then follows from Property (i), Proposition \ref{P:theta0divisibility}, and the computation $ \prod_{i=1}^{p-1} (1-\sigma^i) = p - N_H$ (see \cite{bS2012b} Lemma 2.6).
\end{proof}

Proposition \ref{P:theta1divisibility} gives more refined $p$-divisibility information about the coefficients of $\theta_1$.  We need a lemma:

\begin{lemma}\label{L:factor}
If $\omega$ is in $\mathbb{Q}[G]$, then there exists $\alpha_{\omega} = \sum_{i=0}^{p-2} a_{i} \sigma^{i}$ in $\mathbb{Q}[H]$ such that
\begin{equation*}
	\omega \theta_{1} = (1-\sigma)\alpha_{\omega} (1-\tau).
\end{equation*}
If $\omega \theta_{1}$ is in $\mathbb{Z}[G]$, then $\alpha_{\omega}$ can be chosen in $\mathbb{Z}[H]$.
\end{lemma}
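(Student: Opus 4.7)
The plan is to extract the factors $(1-\tau)$ and $(1-\sigma)$ from $\omega\theta_{1}$ one at a time, exploiting the decomposition $\mathbb{Q}[G] = \mathbb{Q}[H] \oplus \mathbb{Q}[H]\tau$. The two crucial properties I will lean on are that $\tau$ acts on $\omega\theta_{1}$ as $-1$ and that $N_{H}$ annihilates $\omega\theta_{1}$. Both transfer from the corresponding properties of $\theta_{1}$ itself since $G$ is abelian, so $\omega$ commutes with $\tau$ and $N_{H}$. For $\theta_{1}$, the action of $\tau$ as $-1$ follows because $\chi$ generates $\widehat{G}$ and $\tau$ has order $2$, forcing $\chi(\tau) = -1$; hence $\tau e_{\chi^{i}} = -e_{\chi^{i}}$ for each odd $i$ appearing in $\theta_{1}$. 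The vanishing $N_{H}\theta_{1} = 0$ is part (i) of Lemma \ref{L:theta1properties}.

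First I would note that any $x = u + v\tau \in \mathbb{Q}[G]$ with $u, v \in \mathbb{Q}[H]$ satisfies $\tau x = -x$ if and only if $v = -u$, in which case $x = u(1-\tau)$ with $u$ uniquely determined by $x$. Applied to $\omega\theta_{1}$, this yields $\omega\theta_{1} = a(1-\tau)$ for a unique $a \in \mathbb{Q}[H]$. Next, $N_{H}\omega\theta_{1} = 0$ combined with the same uniqueness forces $N_{H}a = 0$ in $\mathbb{Q}[H]$. To identify this kernel, I would use the semisimple decomposition $\mathbb{Q}[H] \cong \mathbb{Q} \times \mathbb{Q}(\zeta_{p})$ (sending $\sigma \mapsto (1, \zeta_{p})$): there $N_{H}$ corresponds to $(p, 0)$ and $1-\sigma$ to $(0, 1-\zeta_{p})$, so the annihilator of $N_{H}$ is exactly $(1-\sigma)\mathbb{Q}[H]$. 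Writing $a = (1-\sigma)\alpha_{\omega}$ with $\alpha_{\omega} \in \mathbb{Q}[H]$, the ambiguity in $\alpha_{\omega}$ is addition of a rational multiple of $N_{H}$ (the annihilator of $1-\sigma$), and this freedom can be spent to kill the $\sigma^{p-1}$-coefficient, giving the required expansion $\alpha_{\omega} = \sum_{i=0}^{p-2} a_{i}\sigma^{i}$.

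For the integrality claim, I would simply expand
\begin{equation*}
(1-\sigma)\alpha_{\omega} = a_{0} + (a_{1}-a_{0})\sigma + (a_{2}-a_{1})\sigma^{2} + \cdots + (a_{p-2}-a_{p-3})\sigma^{p-2} - a_{p-2}\sigma^{p-1},
\end{equation*}
observe that $(1-\tau)$ leaves the coefficients in $\mathbb{Q}[H]$ undisturbed (it just duplicates them with a sign on the $\tau$-side of the decomposition), and read off that $\omega\theta_{1} \in \mathbb{Z}[G]$ gives $a_{0} \in \mathbb{Z}$ together with $a_{j} - a_{j-1} \in \mathbb{Z}$ for $1 \le j \le p-2$. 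A one-line induction then forces every $a_{j} \in \mathbb{Z}$. I do not expect a real obstacle here; the only conceptually nontrivial step is the identification $\ker(N_{H} \cdot) = (1-\sigma)\mathbb{Q}[H]$, which is a routine consequence of the semisimple structure of $\mathbb{Q}[H]$, and everything else is elementary manipulation within $\mathbb{Q}[G]$ for $G \cong \mathbb{Z}/2p\mathbb{Z}$.
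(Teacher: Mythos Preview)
Your argument is correct and follows essentially the same path as the paper: extract the $(1-\tau)$ factor from the oddness of the characters in $\theta_{1}$, then use $N_{H}\theta_{1}=0$ to extract the $(1-\sigma)$ factor, and read off integrality from the resulting telescoping coefficients. The only cosmetic difference is that the paper writes down the explicit formula $\alpha_{\omega}=\sum_{i=0}^{p-2}\bigl(\sum_{k=0}^{i}c_{k}\bigr)\sigma^{i}$ directly (partial sums of the $c_{i}$), whereas you reach the same $\alpha_{\omega}$ via the semisimple identification $\ker(N_{H}\,\cdot\,)=(1-\sigma)\mathbb{Q}[H]$ followed by the normalization killing the $\sigma^{p-1}$-coefficient; your integrality induction is precisely the inverse of the paper's partial-sum construction.
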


\begin{proof}

Because all characters in the sum defining $\theta_1$ are odd, $\omega \theta_1$ has the form
\begin{equation*}
	\omega \theta_{1} = \sum_{i=0}^{p-1} c_{i} \sigma^{i} (1-\tau).
\end{equation*}
with coefficients $c_i$ in $\mathbb{Q}$.  Property (i) of Lemma \ref{L:theta1properties} shows that $\sum_{i=0}^{p-1} c_{i} = 0$.  The present lemma follows by setting
\begin{equation*}
	\alpha_{\omega} = \sum_{i=0}^{p-2} \left( \sum_{k=0}^{i} c_{k} \right) \sigma^{i}.\qedhere
\end{equation*}
\end{proof}

\begin{proposition}\label{P:theta1divisibility}
Let $K_1/k_0$ be a degree $2p$ cyclic extension with Galois group $G$. Assume that the $p$-power roots of unity in $K_1$ are not $G$-cohomologically trivial.  
\begin{enumerate}
	\item If $\mathfrak{K}_p$ is nontrivial, then $\theta_1$ has $p$-integral coefficients.
	\item If $\mathfrak{K}_p$ is trivial, then $p \theta_1$ has $p$-integral coefficients while $\theta_1$ does not.
\end{enumerate} 
\end{proposition}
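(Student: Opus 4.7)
The plan is to work $p$-locally throughout. By Lemma~\ref{L:theta1properties}(ii), $pq\theta_1 \in \mathbb{Z}[G]$, and since our hypothesis forces $q$ to be prime to $p$, $p\theta_1$ is already $p$-integral; this settles the $p$-integrality of $p\theta_1$ in both parts of the proposition. To decide whether $\theta_1$ itself is $p$-integral, I would exploit that $\theta_1$ lies in the minus component of $\mathbb{Q}[G]$ (every character in its idempotent expansion is odd). Using the decomposition $\mathbb{Z}_{(p)}[G] = \mathbb{Z}_{(p)}[H]\oplus\mathbb{Z}_{(p)}[H]\tau$ arising from $G = H \sqcup \tau H$, I can write $p\theta_1 = \beta(1-\tau)$ for a unique $\beta \in \mathbb{Z}_{(p)}[H]$. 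Then $\theta_1 \in \mathbb{Z}_{(p)}[G]$ if and only if $\beta \in p\mathbb{Z}_{(p)}[H]$, reducing the problem to computing $\beta \bmod p$.

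For this I observe that the $p-1$ odd characters $\chi^i$ with $i \neq p$ are precisely the $\Gal(\mathbb{Q}(\zeta)/\mathbb{Q})$-conjugates of $\chi$, where $\zeta := \chi(\sigma)$ is a primitive $p$th root of unity. Accordingly the corresponding $L$-values are the conjugates of $L := L_{K_1/k_0, S^{\mathrm{min}}}(0,\overline{\chi}) \in \mathbb{Z}[\zeta]_{(\pi)}$, with $\pi := 1-\zeta$. Extracting the $\tau^0$-part of the idempotent expansion of $p\theta_1$ and collecting the resulting inner character sums into Galois traces gives
\begin{equation*}
\beta \;=\; \tfrac{1}{2}\sum_{j=0}^{p-1}\mathrm{Tr}_{\mathbb{Q}(\zeta)/\mathbb{Q}}(L\zeta^{-j})\,\sigma^{j}.
\end{equation*}
Writing $L = \sum_{a} \ell_a \zeta^a$ with $\ell_a \in \mathbb{Z}_{(p)}$ and using $\mathrm{Tr}(\zeta^n) = -1$ for $n \not\equiv 0 \pmod{p}$ yields $\mathrm{Tr}(L\zeta^{-j}) \equiv -T \pmod{p}$ independently of $j$, where $T := \sum_a \ell_a$ is the image of $L$ under the reduction $\mathbb{Z}[\zeta]_{(\pi)} \twoheadrightarrow \mathbb{F}_p$. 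Hence $\beta \equiv -\tfrac{T}{2}N_H \pmod{p}$, so $\theta_1$ is $p$-integral exactly when $L \equiv 0 \pmod{\pi}$, i.e., when $v_\pi(L)\geq 1$.

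To translate this into a condition on $\mathfrak{K}_p$, I would use induction of characters from $\widehat{\langle\tau\rangle}$ up to $\widehat{G}$, yielding
\begin{equation*}
L_{K_1/k_1, S_{k_1}^{\ns}}(0,\psi) \;=\; L_{K_0/k_0, S_{k_0}^{\ns}}(0,\psi_0)\prod_{\substack{i\text{ odd}\\ i\neq p}}L_{K_1/k_0, S^{\mathrm{min}}}(0,\chi^i).
\end{equation*}
The key point for the right-hand factors is that for $i$ odd and $i \neq p$ the character $\chi^i$ is ramified at every prime in $S^{\mathrm{min}}\setminus S_{k_0}^{\ns}$ --- the inertia subgroup at such a prime equals $H$, so the Euler factor vanishes for any character nontrivial on $H$ --- hence the $L$-values with $S^{\mathrm{min}}$ and with $S_{k_0}^{\ns}$ coincide. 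Tate's formula applied to the two outer $L$-values identifies the product as $2^{\bullet}|\mathfrak{C}_1|/(q|\mathfrak{C}_0|)$, whose $p$-adic valuation is $v_p(|\mathfrak{K}_p|)$ since $|\mathfrak{C}_{1,p}|/|\mathfrak{C}_{0,p}| = |\mathfrak{K}_p|$. The norm-valuation relation $v_p(N_{\mathbb{Q}(\zeta)/\mathbb{Q}}(L)) = v_\pi(L)$ then gives $v_\pi(L) = v_p(|\mathfrak{K}_p|)$, completing the proof.

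The main technical obstacle is precisely this Euler-factor bookkeeping. A naive induction identity with $S^{\mathrm{min}}$-sets on both sides would degenerate to $0=0$ when some prime of $S^{\mathrm{min}}$ splits in $K_0/k_0$; passing to $S_{k_0}^{\ns}$ and $S_{k_1}^{\ns}$ and observing that the odd, non-quadratic characters are blind to the removed primes produces a nonvanishing identity that handles the split and non-split cases uniformly.
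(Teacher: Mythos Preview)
Your argument is correct and follows the same overall strategy as the paper: both reduce the $p$-integrality of $\theta_1$ to the $\pi$-adic valuation of $L=\chi(\theta_1)=L_{K_1/k_0,S^{\mathrm{min}}}(0,\overline{\chi})$, and both identify $v_\pi(L)$ with $v_p(|\mathfrak{K}_p|)$ through the norm $N_{\mathbb{Q}(\zeta)/\mathbb{Q}}(L)$. The packaging differs in two places. First, the paper invokes Lemma~\ref{L:factor} to write $\theta_1=(1-\sigma)(1-\tau)\sum_{i=0}^{p-2}d_i\sigma^i$ and then reads off $p$-integrality of the $d_i$ from $\pi$-integrality of $\sum d_i\zeta^i$ via the integral basis $\{1,\zeta,\ldots,\zeta^{p-2}\}$; you bypass the factorization lemma and instead compute $\beta\bmod p$ directly by the trace identity $\mathrm{Tr}(L\zeta^{-j})\equiv -T\pmod p$, which is a bit more elementary and avoids appealing to $N_H\theta_1=0$. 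Second, the paper quotes the norm formula for $N_{\mathbb{Q}(\zeta)/\mathbb{Q}}(\chi(\theta_1))$ from Proposition~2.5 of \cite{bS2012b}, whereas you rederive it from Artin induction and Tate's quadratic formula, with the Euler-factor observation (inertia $=H$ at primes of $S^{\mathrm{min}}\setminus S_{k_0}^{\ns}$) doing the work of matching the two $S$-sets; this is effectively the proof of that proposition, so your version is more self-contained but not a different method.
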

 
\begin{proof}
Lemma \ref{L:factor} shows that we may write
\begin{equation*}
	\theta_1 = (1-\sigma) (1-\tau) \sum_{i=0}^{p-2} d_i \sigma^i
\end{equation*}
for some rational numbers $d_i$ with $i=0, \ldots, p-2$.

Let $\zeta_p$ be the $p$th root of unity for which $\chi(\sigma) = \zeta_p$. Proposition 2.5 in \cite{bS2012b} says
\begin{equation}\label{E:theta1norm}
	N_{\mathbb{Q}(\zeta_p)/\mathbb{Q}} \left( \chi \left( \theta_1 \right) \right) = \frac{1}{q} 2^{(p-1) \left| S_1 \right|} p^{\left| S_2 \right|} \left| \mathfrak{K} \right|
\end{equation} 
where $N_{\mathbb{Q}(\zeta_p)/\mathbb{Q}}$ is the field norm and $S_1$ and $S_2$ are subsets of $S^{\mathrm{min}}$ (in fact, since $S^{\mathrm{min}}$ is minimal, it can be shown that $S_2$ is empty).

Combining the above two equations, we find that
\begin{equation*}
	N_{\mathbb{Q}(\zeta_p)/\mathbb{Q}} \left( \sum_{i=0}^{p-2} d_i  \zeta_p^i \right) = \frac{1}{q} 2^{(p-1) (\left| S_1 \right| - 1)} p^{\left| S_2 \right| - 1} \left| \mathfrak{K} \right|
\end{equation*}

Since the $p$-power roots of unity in $K_1$ are not $G$-cohomologically trivial, $q$ is relatively prime to $p$.  Thus, if $\mathfrak{K}_p$ is nontrivial, the above norm is $p$-integral.  As the elements $\{ \, 1, \zeta_p, \ldots, \zeta_p^{p-2} \, \}$ form an integral basis for the ring of integers in $\mathbb{Q} (\zeta_p)$, the coefficients $d_i$ are all $p$-integral.  Thus, the coefficients of $\theta_1$ are $p$-integral too.

If $\mathfrak{K}_p$ is trivial, then the above norm is not $p$-integral, hence neither is $\sum_{i=0}^{p-2} d_i \zeta_p^i$.  On the other hand, the norm of $p \sum_{i=0}^{p-2} d_i \zeta_p^i$ is a $p$-integer, so as above we find that the coefficients of $p \theta_1$ are $p$-integral. 
\end{proof}

\begin{theorem}\label{T:simplecase}
Let $K_1/k_0$ be a degree $2p$ cyclic extension with Galois group $G$.   Set $w_1 = \left| \mu_{K_1} \right|$ and $p^r = \left| \mu_{K_1} \otimes \mathbb{Z}_p \right|$.  Assume that 
\begin{enumerate}
	\item $\mu_{K_1} \otimes \mathbb{Z}_p$ is not $G$-cohomologically trivial, and
	\item $\mathfrak{K}_p$ is nontrivial.
\end{enumerate}
If some place of $k_0$ splits in $K_0/k_0$ and ramifies in $k_1/k_0$, then $w_1 \theta_{K_1/k_0, S^{\mathrm{min}}} (0)$ is in $p^r \mathbb{Z}[G]$.  If no such place exists, then for $0 \leq s \leq r$, $w_1 \theta_{K_1/k_0, S^{\mathrm{min}}} (0)$ is in $p^s \mathbb{Z}[G]$ if and only if $p^{s+1}$ divides $\left| \left( \Cl_{K_0} \otimes \mathbb{Z}_p \right)^- \right|$.
\end{theorem}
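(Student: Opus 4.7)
The plan is to split $\theta=\theta_0+\theta_1$ as in Section \ref{S:simplecase} and deduce each case of the theorem from a computation on a single coefficient. Two standing consequences of the hypotheses come first: since $\mu_{K_1}\otimes\mathbb{Z}_p$ is not $G$-cohomologically trivial, $v_p(w_1)=r$ and $p\nmid q$, where $q=w_1/w_0$; and since $\mathfrak{K}_p$ is nontrivial, Proposition \ref{P:theta1divisibility}(i) gives $\theta_1\in\mathbb{Z}_{(p)}[G]$, so every coefficient of $w_1\theta_1$ has $p$-adic valuation at least $r$.

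Next I will verify that the theorem's dichotomy coincides with the one in Proposition \ref{P:theta0divisibility} (``some place of $S^{\mathrm{min}}$ splits in $K_0/k_0$''). A place of $k_0$ that splits in $K_0/k_0$ and ramifies in $k_1/k_0$ automatically ramifies in $K_1/k_0$, so lies in $S^{\mathrm{min}}$. Conversely, a place of $S^{\mathrm{min}}$ that splits in $K_0/k_0$ must be finite (archimedean places of the totally real $k_0$ ramify, not split, in the totally complex $K_0/k_0$); its nontrivial inertia subgroup of $G$ maps trivially to $G/H=\Gal(K_0/k_0)$, hence lies in the order-$p$ subgroup $H$ and therefore equals it, and then surjects onto $\Gal(k_1/k_0)$, so the place ramifies in $k_1/k_0$.

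In the splitting case, Proposition \ref{P:theta0divisibility} gives $\theta_0=0$, so $w_1\theta=w_1\theta_1$, which is in $\mathbb{Z}[G]$ by integrality and has each coefficient of $p$-valuation at least $r$; hence $w_1\theta\in p^r\mathbb{Z}[G]$. In the non-splitting case I will compute the coefficient of $1\in G$ in $w_1\theta_0$ directly from the formula in Proposition \ref{P:theta0divisibility}: it equals $q\cdot 2^{|S^{\mathrm{min}}|-2}\cdot|\mathfrak{C}_0|/p$, of $p$-valuation $a-1$, where $a=v_p(|\mathfrak{C}_0|)=v_p(|(\Cl_{K_0}\otimes\mathbb{Z}_p)^-|)$. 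Adding to this the corresponding coefficient of $w_1\theta_1$, whose $p$-valuation is at least $r$, the coefficient of $1$ in $w_1\theta$ has $p$-valuation equal to $a-1$ when $a\le r$ and at least $r$ when $a>r$.

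Finally, Proposition \ref{P:samedenominators} (Hayes) enters: the congruence $a_{\tau\sigma}\equiv N\sigma\cdot a_\tau\pmod{w_1}$ with $\gcd(N\sigma,w_1)=1$ shows that the coefficients of $w_1\theta$ either all have $p$-valuation at least $r$, or all share a common $p$-valuation less than $r$. So the single calculation above controls the $p$-divisibility of every coefficient, giving for $0\le s\le r$ that $w_1\theta\in p^s\mathbb{Z}[G]$ if and only if $s\le a-1$, i.e.\ $p^{s+1}$ divides $|(\Cl_{K_0}\otimes\mathbb{Z}_p)^-|$. Nothing here is deep; the main challenge is the $p$-adic bookkeeping, and the leverage supplied by Hayes' proposition is precisely that it reduces the theorem to this one-coefficient calculation.
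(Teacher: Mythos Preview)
Your proof is correct and follows essentially the same path as the paper: decompose $\theta=\theta_0+\theta_1$, use Proposition~\ref{P:theta1divisibility}(i) to see that $w_1\theta_1$ has all coefficients of $p$-valuation $\ge r$, match the theorem's dichotomy with that of Proposition~\ref{P:theta0divisibility}, and then read off the answer from the explicit formula for $\theta_0$.

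The only difference is in the non-splitting case. The paper does not invoke Proposition~\ref{P:samedenominators} at all: the formula $\theta_0=\frac{1}{p}\tilde\theta N_H$ with $\tilde\theta=\frac{1}{w_0}2^{|S^{\mathrm{min}}|-2}|\mathfrak{C}_0|(1-\tau)$ already shows that \emph{every} coefficient of $w_1\theta_0$ equals $\pm\,q\,2^{|S^{\mathrm{min}}|-2}|\mathfrak{C}_0|/p$, hence has $p$-valuation exactly $v_p(|\mathfrak{C}_0|)-1$. Since the coefficients of $w_1\theta_1$ have valuation $\ge r$, the conclusion for $w_1\theta$ follows coefficient-by-coefficient. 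Your route---compute the single coefficient of $1$ and then transport the valuation to the others via Hayes' congruence $a_{\tau\sigma}\equiv N\sigma\cdot a_\tau\pmod{w_1}$---is perfectly valid, just slightly less direct given that the shape $(1-\tau)N_H$ of $\theta_0$ makes the uniformity of its coefficients manifest.
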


\begin{proof}
Combining conditions (i) and (ii) with Property (ii) of Lemma \ref{L:theta1properties} and Proposition \ref{P:theta1divisibility}, we find that $w_1 \theta_1$ is in $p^r \mathbb{Z}[G]$.

If some place in $S^{\mathrm{min}}$ splits in $K_0/k_0$, then $\theta_0 = 0$ by Proposition \ref{P:theta0divisibility}. Such places are exactly those that split in $K_0/k_0$ and ramify in $k_1/k_0$. Hence, if such a place exists, then $w_1 \theta_{K_1/k_0, S^{\mathrm{min}}} (0)$ is in $p^r \mathbb{Z}[G]$. 

If no place in $S^{\mathrm{min}}$ splits in $K_0/k_0$, then Proposition \ref{P:theta0divisibility} shows that the coefficients of $w_1 \theta_0$ have the same $p$-valuation as $\frac{| \mathfrak{C}_0 |}{p}$. Thus, $w_1 \theta_{K_1/k_0, S^{\mathrm{min}}} (0)$ is in $p^s \mathbb{Z}[G]$ if $s \leq r$ and $p^{s+1}$ divides $\left| \mathfrak{C}_0 \right|$.  Conversely, if $\theta_{K_1/k_0, S^{\mathrm{min}}} (0)$ is in $p^s \mathbb{Z}[G]$ with $s \leq r$, then $p^{s+1}$ divides $\left| \mathfrak{C}_0 \right|$.  The result then follows from Proposition 2.1 in \cite{bS2012b}, which shows that $\mathfrak{C}_0 \otimes \mathbb{Z}_p \cong \left( \Cl_{K_0} \otimes \mathbb{Z}_p \right)^-$.
\end{proof}

\begin{theorem}\label{T:exceptionalsimplecase}
Let $K_1/k_0$ be a degree $2p$ cyclic extension with Galois group $G$.  Set $w_1 = \left| \mu_{K_1} \right|$ and $p^r = \left| \mu_{K_1} \otimes \mathbb{Z}_p \right|$.  Assume that
\begin{enumerate}
	\item $\mu_{K_1} \otimes \mathbb{Z}_p$ is not $G$-cohomologically trivial
	\item $\mathfrak{K}_p$ is trivial
	\item Some place of $k_0$ splits in $K_0/k_0$ and ramifies in $k_1/k_0$
\end{enumerate}
Then $w_1 \theta_{K_1/k_0, S^{\mathrm{min}}} (0)$ is in $p^{r-1} \mathbb{Z}[G]$, but not in $p^r \mathbb{Z}[G]$.
\end{theorem}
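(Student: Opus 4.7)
The plan is to derive this result almost immediately by combining the splitting $\theta = \theta_0 + \theta_1$ with Propositions \ref{P:theta0divisibility} and \ref{P:theta1divisibility}, so that essentially no new analysis is required. First, condition (iii) supplies a prime of $k_0$ that splits in $K_0/k_0$ and ramifies in $k_1/k_0$; in particular it ramifies in $K_1/k_0$ and therefore lies in $S^{\mathrm{min}}$. Proposition \ref{P:theta0divisibility} then forces $\theta_0 = 0$, so $\theta = \theta_1$ and it suffices to pin down the exact $p$-divisibility of the coefficients of $w_1\theta_1$.

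Second, conditions (i) and (ii) are precisely the hypotheses of Proposition \ref{P:theta1divisibility}(ii), which delivers that $p\theta_1 \in \mathbb{Z}_{(p)}[G]$ while $\theta_1 \notin \mathbb{Z}_{(p)}[G]$; equivalently, at least one coefficient of $p\theta_1$ is a $p$-adic unit. Third, the equivalence recorded just before Proposition \ref{P:theta0divisibility} shows that condition (i) is equivalent to $K_0 = k_0(\zeta_{p^r})$; hence $\zeta_{p^r} \in \mu_{K_0}$ and $v_p(w_0) = r$, and since $v_p(w_1) = r$ by the definition of $r$, one has $w_1 = p^r u$ with $u$ an integer coprime to $p$.

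Putting these ingredients together, $w_1\theta = w_1\theta_1 = p^{r-1}\cdot u\cdot (p\theta_1)$, and $u\cdot(p\theta_1)$ lies in $\mathbb{Z}_{(p)}[G]$. Because $w_1\theta$ belongs to $\mathbb{Z}[G]$ by the integrality property, a routine denominator-clearing argument upgrades $u\cdot (p\theta_1)$ to an element of $\mathbb{Z}[G]$, yielding the containment $w_1\theta \in p^{r-1}\mathbb{Z}[G]$. For the non-containment in $p^r\mathbb{Z}[G]$, note that multiplication by the $p$-adic unit $u$ preserves $p$-valuations, so the coefficient of $p\theta_1$ of $p$-valuation $0$ remains of $p$-valuation $0$ in $u\cdot(p\theta_1)$; thus at least one coefficient of $w_1\theta/p^{r-1}$ is coprime to $p$, and $w_1\theta \notin p^r\mathbb{Z}[G]$.

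The main obstacle is essentially nil: all of the substantive work has already been done in Propositions \ref{P:theta0divisibility} and \ref{P:theta1divisibility}. The only minor subtlety worth flagging is the derivation $v_p(w_0) = r$, which relies on the earlier equivalence between non-cohomological-triviality of $\mu_{K_1}\otimes\mathbb{Z}_p$ and the identification $K_0 = k_0(\zeta_{p^r})$, together with the containment $\mu_{K_0}\otimes\mathbb{Z}_p\subseteq \mu_{K_1}\otimes\mathbb{Z}_p$.
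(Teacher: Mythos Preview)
Your proposal is correct and follows essentially the same approach as the paper: use condition (iii) together with Proposition \ref{P:theta0divisibility} to kill $\theta_0$, and use conditions (i) and (ii) together with Proposition \ref{P:theta1divisibility}(ii) to determine the exact $p$-divisibility of $w_1\theta_1$. The only cosmetic difference is that the paper invokes Lemma \ref{L:theta1properties}(ii) (so that $pq\theta_1\in\mathbb{Z}[G]$ directly, with $q$ prime to $p$), whereas you instead appeal to the integrality of $w_1\theta$ and clear denominators; since Lemma \ref{L:theta1properties}(ii) is itself derived from the integrality property, the two routes are equivalent.
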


\begin{proof}
Combining conditions (i) and (ii) with Property (ii) of Lemma \ref{L:theta1properties} and Proposition \ref{P:theta1divisibility}, we find that $w_1 \theta_1$ is in $p^{r-1} \mathbb{Z}[G]$, but not in $p^r \mathbb{Z}[G]$.  A place of $k_0$ that splits in $K_0/k_0$ and ramifies in $k_1/k_0$ is in $S^{\mathrm{min}}$.  Proposition \ref{P:theta0divisibility} shows that $\theta_0 = 0$, which implies the theorem.
\end{proof}

Note that all Abelian degree $2p$ extensions $K_1/k_0$ for which the group of $p$-power roots of unity in $K_1$ is not cohomologically trivial fit the hypotheses of either Theorem \ref{T:simplecase} or Theorem \ref{T:exceptionalsimplecase} excepting the TK extensions.  For those, the situation requires a more careful analysis.

\section{The sophisticated case: TK extensions}\label{S:sophisticatedcase}

We retain the notation introduced at the beginning of Section \ref{S:simplecase}.  To begin, we identify properties of TK extensions that further reveal their exceptionality.  

\begin{proposition}\label{P:trivialproperties}
If $K_1/k_0$ is a TK extension, then
\begin{enumerate}
	\item $K_1/k_0$ is unramified away from $p$
	\item $\left( \Cl_{K_0} \otimes \mathbb{Z}_p \right)^-$ and $\left( \Cl_{K_1} \otimes \mathbb{Z}_p \right)^-$ are isomorphic nontrivial cyclic groups with trivial $\Gal(K_1/K_0)$-action
\end{enumerate}
\end{proposition}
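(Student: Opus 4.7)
The plan is to split the proof into the two claims.

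For (i), I would exploit Kummer theory over $K_0$. Hypothesis (1) is equivalent (as recalled at the start of Section~\ref{S:simplecase}) to $K_0 = k_0(\zeta_{p^r})$, so $\mu_p \subset K_0$ and we may write $K_1 = K_0(a^{1/p})$ for some $a \in K_0^{\times}$. A short computation comparing how $\sigma$ and $\tau$ act on $a^{1/p}$, combined with $\tau(\zeta_p) = \zeta_p^{-1}$ and the fact that $G$ is abelian, forces $\tau(a) \equiv a^{-1} \pmod{K_0^{\times p}}$. Now suppose for contradiction that some prime $\mathfrak{P}_0$ of $K_0$ not lying over $p$ ramified in $K_1/K_0$; then $v_{\mathfrak{P}_0}(a) \not\equiv 0 \pmod p$. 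The prime $\mathfrak{p} = \mathfrak{P}_0 \cap k_0$ does not split in $K_0/k_0$ by TK(2), and since $\mathfrak{p}$ lies outside $p$ it is unramified in the cyclotomic extension $K_0/k_0$, so $\mathfrak{p}$ is inert and $\tau(\mathfrak{P}_0) = \mathfrak{P}_0$. Then $v_{\mathfrak{P}_0}(a) = v_{\mathfrak{P}_0}(\tau(a)) \equiv -v_{\mathfrak{P}_0}(a) \pmod p$, forcing $v_{\mathfrak{P}_0}(a) \equiv 0 \pmod p$ because $p$ is odd --- a contradiction. Since $K_0/k_0$ is unramified away from $p$ by construction, this gives (i).

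For (ii), the isomorphism is nearly immediate: TK(3) gives injectivity of $N \colon (\Cl_{K_1}\otimes\mathbb{Z}_p)^- \to (\Cl_{K_0}\otimes\mathbb{Z}_p)^-$, and Proposition~2.2 of \cite{bS2012b}, transported along the identifications $\mathfrak{C}_{i,p} \cong (\Cl_{K_i}\otimes\mathbb{Z}_p)^-$, supplies surjectivity. Triviality of the $\Gal(K_1/K_0)$-action is then a one-line equivariance argument: for $x \in (\Cl_{K_1}\otimes\mathbb{Z}_p)^-$, $N(\sigma x) = \sigma \cdot N(x) = N(x)$ because $\sigma$ acts trivially on $K_0$, and injectivity of $N$ forces $\sigma x = x$.

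The main obstacle is cyclicity and nontriviality. My plan for cyclicity is to apply Chevalley's ambiguous class number formula (or its $\tau = -1$ eigenspace analogue) to the cyclic degree-$p$ extension $K_1/K_0$, invoking (i) to identify the ramified primes of $K_0$ as those above $p$ and to control their contribution. The equality $|(\Cl_{K_1}\otimes\mathbb{Z}_p)^-| = |(\Cl_{K_0}\otimes\mathbb{Z}_p)^-|$ produced by the isomorphism, combined with the bookkeeping for units in the $-$-part, should absorb all but one ramification factor and show that the module has $\mathbb{F}_p$-rank at most one. For nontriviality, I would argue by contradiction: if $(\Cl_{K_0}\otimes\mathbb{Z}_p)^- = 0$ then class field theory applied to the abelian $p$-extensions of $k_0$ unramified outside $p$ would leave no room for the cyclic degree-$p$ extension $k_1/k_0$ other than the first layer of the cyclotomic $\mathbb{Z}_p$-extension, which would force $\zeta_{p^{r+1}} \in K_1$, contradicting $|\mu_{K_1}\otimes\mathbb{Z}_p| = p^r$. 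I expect the careful bookkeeping in the Chevalley-type step --- keeping track of which unit quotients survive the $\tau = -1$ projection --- to be the most delicate part of the proof.
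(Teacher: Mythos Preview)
Your argument for (i) is correct and gives a self-contained Kummer-theoretic substitute for the paper's citation of Lemma~3.4 in \cite{bS2012b}. One implicit step worth making explicit: to invoke condition~TK(2) you need that the prime $\mathfrak{p}=\mathfrak{P}_0\cap k_0$ ramifies in $k_1/k_0$; this holds because $\mathfrak{p}\nmid p$ is unramified in the cyclotomic extension $K_0/k_0$ yet ramifies in $K_1=K_0k_1$, so it must ramify in $k_1$. Your treatment of the isomorphism and of the triviality of the $H$-action in (ii) agrees with the paper's.

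The gap is in your sketches for cyclicity and nontriviality: both tacitly require the TKNS hypothesis rather than merely TK. For nontriviality, the assertion ``$(\Cl_{K_0}\otimes\mathbb{Z}_p)^-=0$ forces $k_1$ to be the first cyclotomic layer'' is exactly Lemma~\ref{L:reflection} of this paper, and its proof uses that no prime of $k_0$ above $p$ splits in $K_0/k_0$; without that, the maximal abelian exponent-$p$ extension of $k_0$ unramified outside $p$ has degree $p^{1+s}$ with $s$ the number of such split primes, so there is room for a noncyclotomic $k_1$ even when $(\Cl_{K_0}\otimes\mathbb{Z}_p)^-=0$. Likewise, in a minus-part Chevalley count for $K_1/K_0$ the ramified primes are those above $p$ by your part (i), but any pair $\{\mathfrak{P},\tau\mathfrak{P}\}$ with $\mathfrak{P}\neq\tau\mathfrak{P}$ contributes nontrivially to the $\tau=-1$ eigenspace of the ramification term, and there is no mechanism in your bookkeeping to ``absorb all but one'' of these.

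The paper avoids both issues by invoking Lemmas~3.3 and~3.5 of \cite{bS2012b}, which establish cyclicity (given $\mathfrak{K}_p=0$) and nontriviality for general TK extensions through arguments internal to the $\mathfrak{C}_{i,p}$--$\mathfrak{K}_p$ framework rather than through reflection or a raw ambiguous-class count. If you wish to keep your line of attack, you must either restrict to TKNS extensions or find an independent way to neutralise the contribution of primes above $p$ that split in $K_0/k_0$.
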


\begin{proof}
Condition (i) follows from Lemma 3.4 in \cite{bS2012b} (note that the case $B^{\sharp}$ appearing in that lemma consists of extensions for which the group of $p$-power roots of unity in $K_1$ is not $G$-cohomologically trivial and for which no prime ideal of $k_0$ splits in $K_0/k_0$ and ramifies in $k_1/k_0$).

The isomorphism in condition (ii) follows immediately from the surjectivity of the norm map $N \colon \mathfrak{C}_{1,p} \rightarrow \mathfrak{C}_{0,p}$ and the triviality of its kernel.  Because $N$ is a $H$-isomorphism, $H$ acts trivially on $\mathfrak{C}_{1,p}$.  The cyclicity follows from Lemma 3.3 in \cite{bS2012b} and the assumption that $\mathfrak{K}_p$ is trivial.  Finally, the nontriviality in (ii) is a consequence of Lemma 3.5 in \cite{bS2012b}.
\end{proof}

In the next proposition, we set for convenience $\varepsilon (\mathfrak{a}) = \varepsilon_{\N_{K_{1}/K_{0}} \mathfrak{a}, K_{0}/k_{0}, S^{\mathrm{min}}}$, an element (defined only up to a factor of a root of unity) associated with $N_{K_1/K_0} \mathfrak{a}$ by the Brumer-Stark conjecture for $K_0/k_0$ and $S^{\mathrm{min}}$.

\begin{proposition}\label{P:quadcondition}
Assume that no prime ideal of $k_0$ splits in $K_0/k_0$ and ramifies in $k_1/k_0$ and that the $p$-power roots of unity in $K_1$ are not $G$-cohomologically trivial. For each ideal $\mathfrak{a}$ representing a class in $\Cl_{K_1} \otimes \mathbb{Z}_p$, the element $\varepsilon(\mathfrak{a})$ is in $K_1^{\times p} \mu_{K_1}$.
\end{proposition}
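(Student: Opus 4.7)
The plan is to extract from the principal ideal $(\varepsilon(\mathfrak{a}))$ a $p$-th root in $K_1$, obtaining some $\beta\in K_1^\times$ with $\beta^p=\eta\,\varepsilon(\mathfrak{a})$ for a unit $\eta$, and then to use the Brumer--Stark abelianness of $\varepsilon(\mathfrak{a})$ together with the hypothesis on splitting and ramification to force $\eta\in K_1^{\times p}\mu_{K_1}$.

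First, Proposition \ref{P:theta0divisibility} rewrites the Brumer--Stark ideal relation as
\begin{equation*}
(\varepsilon(\mathfrak{a}))=(N_{K_1/K_0}\mathfrak{a})^{2^{|S^{\mathrm{min}}|-2}|\mathfrak{C}_0|(1-\tau)}.
\end{equation*}
The hypothesis that $\mu_{K_1}\otimes\mathbb{Z}_p$ is not $G$-cohomologically trivial forces $K_0=k_0(\zeta_{p^r})$, hence $p^r\mid w_0$. In the nontrivial case $p\mid|\mathfrak{C}_0|$, I would factor a $p$ out of the exponent to produce an ideal $\mathfrak{b}$ of $K_0$ with $(\varepsilon(\mathfrak{a}))=\mathfrak{b}^p$, whose class lies in the $p$-primary minus part of $\Cl_{K_0}$. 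Since $p[\mathfrak{b}]=0$ in $\Cl_{K_0}$, the class $[\mathfrak{b}\mathcal{O}_{K_1}]$ has trivial norm to $\Cl_{K_0}$; when $\mathfrak{K}_p$ is trivial this yields $\mathfrak{b}\mathcal{O}_{K_1}=(\beta)$ for some $\beta\in K_1^\times$. We then have $\beta^p=\eta\,\varepsilon(\mathfrak{a})$ for $\eta\in\mathcal{O}_{K_1}^\times$, determined modulo $(\mathcal{O}_{K_1}^\times)^p$ once $\beta$ is chosen up to units.

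By Kummer theory, $\mu_{K_1}/\mu_{K_1}^p$ is cyclic of order $p$ generated by $[\zeta_{p^r}]$, so $\varepsilon(\mathfrak{a})\in K_1^{\times p}\mu_{K_1}$ is equivalent to the containment $K_1\bigl(\sqrt[p]{\varepsilon(\mathfrak{a})}\bigr)\subseteq K_1(\zeta_{p^{r+1}})$. Writing $K_1=K_0(\sqrt[p]{u})$, this in turn amounts to showing $[\varepsilon(\mathfrak{a})]\in\langle[u],[\zeta_{p^r}]\rangle$ inside $K_0^\times/K_0^{\times p}$. Three properties are already in hand: the extension $F=K_0\bigl(\sqrt[p]{\varepsilon(\mathfrak{a})}\bigr)/K_0$ is unramified outside $p$ (since $(\varepsilon(\mathfrak{a}))$ is a $p$-th power of an ideal); $F/k_0$ is abelian by the Brumer--Stark condition that $\varepsilon(\mathfrak{a})$ is $w_0$-abelian; and the anti-unit relation $\varepsilon(\mathfrak{a})\,\tau(\varepsilon(\mathfrak{a}))=\pm1\in K_0^{\times p}$ places $[\varepsilon(\mathfrak{a})]$ in the $(-1)$-eigenspace of $\tau$ on $K_0^\times/K_0^{\times p}$.

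The main obstacle is the final step: showing $F\subseteq K_1(\zeta_{p^{r+1}})$. If $F/K_0$ has degree $p$, then $F/k_0$ is abelian of degree $2p$ and has a unique degree-$p$ subfield $F^+/k_0$ with $F=F^+\cdot K_0$, and $F^+/k_0$ is cyclic, unramified outside $p$. I would argue, via a class-field-theoretic count of cyclic degree-$p$ extensions of $k_0$ unramified outside $p$ and a matching of their Kummer data to the $(-1)$-eigenspace of $\tau$ on $K_0^\times/K_0^{\times p}$, that the no-split-ramify hypothesis forces $F^+$ to be either $k_1$ or a subfield of the cyclotomic $\mathbb{Z}_p$-extension of $k_0$. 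Either possibility yields $F\subseteq K_1(\zeta_{p^{r+1}})$ and hence $\varepsilon(\mathfrak{a})\in K_1^{\times p}\mu_{K_1}$. The bulk of the technical work will lie in this final ray-class-group computation under the split/ramification constraint.
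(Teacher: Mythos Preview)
Your approach has two genuine gaps. First, you invoke the triviality of $\mathfrak{K}_p$ to make $\mathfrak{b}\mathcal{O}_{K_1}$ principal, but that is not among the hypotheses of the proposition; it is a TK hypothesis, and this proposition is stated and used more generally. Second, and more seriously, your endgame---forcing $F^+$ to be either $k_1$ or a piece of the cyclotomic $\mathbb{Z}_p$-extension via a ray-class count---cannot succeed when $\rk_p \mathfrak{C}_0 \ge 2$: reflection-type results then produce more than two independent cyclic degree-$p$ extensions of $k_0$ unramified outside $p$, so there is no reason $F^+$ should land among your two candidates. The ``no-split-ramify'' hypothesis constrains how primes ramifying in $k_1/k_0$ behave in $K_0/k_0$; it does not bound the number of such extensions.

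The paper's proof is entirely different and never uses the $w_0$-abelianness of $\varepsilon(\mathfrak{a})$ at all---only that it is an anti-unit generating a specific ideal. It splits on $\rk_p \mathfrak{C}_0$. If the rank is at least $2$, then already in $K_0$ the ideal $(N_{K_1/K_0}\mathfrak{a})^{\frac{w_0}{p}\theta'}$ is principal with anti-unit generator $\gamma$, and $\varepsilon(\mathfrak{a})$ and $\gamma^p$ are anti-units with the same divisor, hence differ by a root of unity. If $\mathfrak{C}_{0,p}$ is cyclic of order $p^t$, the key input is a capitulation lemma (Lemma~3.5 of \cite{bS2012b}): under the hypotheses, any ideal of $K_0$ lifted to $K_1$ has order dividing $p^{t-1}$ in $\mathfrak{C}_{1,p}$. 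Applied to $\mathfrak{a}^{N_H}$ this gives $\mathfrak{a}^{N_H p^{t-1}(1-\tau)}=(\tilde\gamma^{1-\tau})$ in $K_1$, whence $\varepsilon(\mathfrak{a})$ and $\tilde\gamma^{pc(1-\tau)}$ are anti-units with the same divisor and again differ by a root of unity. Incidentally, even within your own framework the Kummer/ray-class machinery was unnecessary: once $\mathfrak{b}\mathcal{O}_{K_1}=(\beta)$, note $\mathfrak{b}^\tau=\mathfrak{b}^{-1}$, so $\beta^{1-\tau}$ is an anti-unit with $(\beta^{1-\tau})^p=(\varepsilon(\mathfrak{a})^2)$, and the anti-unit argument finishes immediately.
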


\begin{proof}
Set $\theta' = \theta_{K_0/k_0, S^{\mathrm{min}}} (0)$. 

First, suppose that the $p$-rank of the group $\mathfrak{C}_{0}$ is at least $2$.  Equation \eqref{E:quadL} implies that (see \cite{bS2012b} Lemma 3.1 for details) 
\begin{equation*}
	\N_{K_{1}/K_{0}} \mathfrak{a}^{\frac{w_{0}}{p} \theta'} = \left( \gamma \right)
\end{equation*}
for some anti-unit $\gamma$ in $K_0$.  It follows that $\left( \varepsilon (\mathfrak{a}) \right) = \left( \gamma^{p} \right)$, and the theorem then follows from the fact that both of $\varepsilon (\mathfrak{a})$ and $\gamma^{p}$ are anti-units.

Otherwise, suppose that $\mathfrak{C}_{0,p} \cong \mathbb{Z}/p^{t} \mathbb{Z}$. Using Tate's determination of the equivariant $L$-function value at $0$ for quadratic extensions (\cite[\S 3, (c)]{jT1980}), the lift of the ideal $(\varepsilon (\mathfrak{a}))$ to $K_{1}$ is then equal to
\begin{equation*}
	\mathfrak{a}^{N_{H} 2^{|S^{\mathrm{min}}|-2} \left|
	\mathfrak{C}_{0} \right| (1-\tau)} = \mathfrak{a}^{N_H p^{t} c (1-\tau)},
\end{equation*}
for some integer $c$.  As $\mathfrak{a}^{N_{H}}$ is the lift of a fractional ideal of $K_{0}$, Lemma 3.5 in \cite{bS2012b} shows that the class of $\mathfrak{a}^{N_{H}}$ has order dividing $p^{t-1}$ in $A_{1}$.  Thus, there is an element $\tilde{\gamma}$ in $K_{1}^{\times}$ such that
\begin{equation*}
	\mathfrak{a}^{N_{H} p^{t-1} (1-\tau)} = \left( \tilde{\gamma}^{1-\tau} \right),
\end{equation*}
and thus, $\varepsilon (\mathfrak{a})$ and $\tilde{\gamma}^{p c (1-\tau)}$ generate the same principal ideal in $K_1$.
As $\tilde{\gamma}^{p c (1-\tau)}$ and $\varepsilon (\mathfrak{a})$ are anti-units, they differ by a factor of a root of unity.
\end{proof}

In the next proposition, we write $\mu_p$ for the group of $p$th roots of unity in $K_0$.

\begin{proposition}\label{P:torsionunit}
Assume that no prime ideal of $k_0$ splits in $K_0/k_0$ and ramifies in $k_1/k_0$ and that that the $p$-power roots of unity in $K_1$ are not $G$-cohomologically trivial. Set $p^r = \left| \mu_{K_1} \otimes \mathbb{Z}_p \right|$.  For each $\sigma$ in $H=\Gal(K_1/K_0)$ and ideal $\mathfrak{a}$ in $K_{1}$ representing a class in $\Cl_{K_{1}} \otimes \mathbb{Z}_p$, there exists an anti-unit $\gamma_{\sigma} \left( \mathfrak{a} \right)$ in  $K_{1}$ for which
\begin{enumerate}
	\item $\mathfrak{a}^{(\sigma-1) q \theta_1} = \left( \gamma_{\sigma} \left( \mathfrak{a} \right) \right)$.
	\item $\N_{K_{1}/K_{0}} \left( \gamma_{\sigma} \left( \mathfrak{a} \right) \right)$ is a $p^r$th root of 
	unity.
\end{enumerate}
Moreover, the map from $H$ to $\mu_p$ given by 
\begin{equation*}
	 \sigma \rightarrow \N_{K_{1}/K_{0}} \left( \gamma_{\sigma} \left( \mathfrak{a} \right) \right)^{p^{r-1}}
\end{equation*}
is a well defined cohomology class in $H^1(H, \mu_p) = \Hom (H, \mu_p)$. 
\end{proposition}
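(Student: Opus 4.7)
The plan is to invoke the $p$-local Brumer-Stark conjecture (the standing hypothesis of Section~\ref{S:sophisticatedcase}), use Kummer theory to produce a natural $p^{r}$-th root of $\beta^{\sigma-1}$, and then descend a further factor of $m := w_{0}/p^{r}$ using that $m$ is coprime to $p$. Before doing so I would verify that $(\sigma-1)q\theta_{1}$ belongs to $\mathbb{Z}[G]$ so that both sides of (i) are meaningful. Proposition~\ref{P:theta0divisibility} expresses $\theta_{0}$ as a $\mathbb{Q}$-multiple of $N_{H}$, and $(\sigma-1)N_{H}=0$, so the claim reduces to $(\sigma-1)q\theta\in\mathbb{Z}[G]$. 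Writing $w_{1}\theta=\sum_{\rho\in G}a_{\rho}\rho^{-1}$ with $a_{\rho}\in\mathbb{Z}$ by the integrality property, the Hayes congruence of Proposition~\ref{P:samedenominators} reads $a_{\rho\sigma}\equiv(N\sigma)\,a_{\rho}\pmod{w_{1}}$. Since $\sigma\in H=\Gal(K_{1}/K_{0})$ fixes $\mu_{K_{0}}$ pointwise, one can take $N\sigma\equiv 1\pmod{w_{0}}$, so $w_{0}\mid a_{\rho\sigma}-a_{\rho}$, and division by $w_{0}$ yields the claim.

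Next I would construct $\gamma_{\sigma}(\mathfrak{a})$. Brumer-Stark supplies an anti-unit $\beta$ with $\mathfrak{a}^{w_{1}\theta}=(\beta)$ such that $K_{1}(\beta^{1/p^{r}})/k_{0}$ is Abelian. Applying $(\sigma-1)$ and using $(\sigma-1)w_{1}\theta_{0}=0$ gives $(\beta^{\sigma-1})=\mathfrak{a}^{(\sigma-1)w_{1}\theta_{1}}$. Because $\sigma$ fixes $\mu_{p^{r}}\subseteq K_{0}$, the cyclotomic character satisfies $\chi_{\mathrm{cyc}}(\sigma)\equiv 1\pmod{p^{r}}$, and a Kummer computation in the Abelian extension produces $y:=\tilde\sigma(\beta^{1/p^{r}})/\beta^{1/p^{r}}\in K_{1}^{\times}$, an anti-unit (because $\beta^{1/p^{r}}$ is) with $y^{p^{r}}=\beta^{\sigma-1}$. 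Setting $\mathfrak{J}:=\mathfrak{a}^{(\sigma-1)q\theta_{1}}$ and taking ideals gives $(y)=\mathfrak{J}^{m}$. Choosing $\mathfrak{a}$ of $p$-power class makes the class of $\mathfrak{J}$ $p$-primary; together with $\mathfrak{J}^{m}$ principal and $\gcd(m,p)=1$, this forces $\mathfrak{J}$ to be principal. The identity $(1+\tau)\theta_{1}=0$ (each character in $\theta_{1}$ is odd under $\tau$) yields $\tau\mathfrak{J}=\mathfrak{J}^{-1}$; combining this minus-part symmetry with the CM structure of $K_{1}/k_{1}$ and the anti-unit $y$ supplies an anti-unit generator $\gamma_{\sigma}(\mathfrak{a})$ of $\mathfrak{J}$ satisfying $\gamma_{\sigma}(\mathfrak{a})^{w_{0}}\beta^{-(\sigma-1)}\in\mu_{K_{1}}$, proving (i).

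For (ii) and the cohomology class, apply $N_{K_{1}/K_{0}}$ to the relation $\gamma_{\sigma}^{w_{0}}\equiv\beta^{\sigma-1}\pmod{\mu_{K_{1}}}$. Since $\sigma\in H$ permutes the factors of $N_{K_{1}/K_{0}}$, one has $N_{K_{1}/K_{0}}(\beta^{\sigma-1})=1$, and hence $N_{K_{1}/K_{0}}(\gamma_{\sigma})^{w_{0}}$ is a root of unity; this places $N_{K_{1}/K_{0}}(\gamma_{\sigma})$ in $\mu_{K_{0}}$. Writing the norm as $\eta\zeta'$ with $\eta\in\mu_{p^{r}}$ and $\zeta'\in\mu_{m}$, the bijection $x\mapsto x^{p}$ on $\mu_{m}$ furnishes a unique $\xi\in\mu_{m}$ with $\xi^{p}=(\zeta')^{-1}$; replacing $\gamma_{\sigma}$ by $\xi\gamma_{\sigma}$ preserves (i) and forces $N_{K_{1}/K_{0}}(\gamma_{\sigma})\in\mu_{p^{r}}$. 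The remaining ambiguity is multiplication of $\gamma_{\sigma}$ by $\mu_{p^{r}}$, under which $N_{K_{1}/K_{0}}(\gamma_{\sigma})^{p^{r-1}}$ is invariant, so the map $f(\sigma):=N_{K_{1}/K_{0}}(\gamma_{\sigma})^{p^{r-1}}\in\mu_{p}$ is well-defined. For the cocycle identity with $\sigma,\sigma'\in H$, the relation $\beta^{\sigma\sigma'-1}=\beta^{\sigma-1}(\beta^{\sigma'-1})^{\sigma}$ yields $\gamma_{\sigma\sigma'}\equiv\gamma_{\sigma}\gamma_{\sigma'}^{\sigma}\pmod{\mu_{K_{0}}}$; the ambiguous $\zeta\in\mu_{K_{0}}$ satisfies $\zeta^{p}\in\mu_{p^{r}}$, so $\zeta\in K_{0}\cap\mu_{p^{r+1}}=\mu_{p^{r}}$ and $\zeta^{p^{r}}=1$, whence $f(\sigma\sigma')=f(\sigma)f(\sigma')$. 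The main obstacle is the anti-unit conclusion in the construction of $\gamma_{\sigma}(\mathfrak{a})$: one must arrange a generator of $\mathfrak{J}$ that is both anti-unit and compatible with the $w_{0}$-th power relation coming from $\beta^{\sigma-1}$, which requires leveraging the CM structure of $K_{1}/k_{1}$ together with the explicit anti-unit $y$.
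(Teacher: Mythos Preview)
Your approach has a fundamental gap: the $p$-local Brumer--Stark conjecture is \emph{not} a standing hypothesis of Section~\ref{S:sophisticatedcase}. Proposition~\ref{P:torsionunit} is stated and proved unconditionally (under only the hypotheses in its statement), and it is used immediately afterward in Theorem~\ref{T:recharacterization} to give a \emph{characterization} of when the $p$-local Brumer--Stark conjecture for $K_1/k_0$ holds. Invoking Brumer--Stark here would make that characterization circular. Note too that even the weaker statement you use---the existence of an anti-unit $\beta$ with $\mathfrak{a}^{w_1\theta}=(\beta)$---is only established later, in Theorem~\ref{T:condition1}, whose proof itself relies on Proposition~\ref{P:torsionunit}.

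The paper's argument is direct and avoids Brumer--Stark entirely. After observing $(\sigma-1)q\theta_1\in\mathbb{Z}[G]$ (your verification of this via the Hayes congruence is fine), it applies Lemma~\ref{L:factor} to factor
\[
(\sigma-1)q\theta_1=(\sigma-1)\alpha(1-\tau)
\]
with $\alpha\in\mathbb{Z}[H]$. The key step is a norm computation (Proposition~2.5 of \cite{bS2012b}) showing that the ideal generated by $\chi(\alpha)$ in $\mathcal{O}=\mathbb{Z}[\zeta_p]$ is contained in $\Fit_{\mathcal{O}}(\mathfrak{K})$ up to prime-to-$p$ factors, so $\alpha$ annihilates $\mathfrak{K}_p$. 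Since $\mathfrak{a}^{\sigma-1}$ represents a class in $\mathfrak{K}_p$, one gets $\mathfrak{a}^{(\sigma-1)\alpha}=\mathfrak{b}\mathfrak{c}(\eta)$ with $\mathfrak{b},\mathfrak{c}$ fixed by $\tau$, and applying $1-\tau$ yields $\mathfrak{a}^{(\sigma-1)q\theta_1}=(\eta^{1-\tau})$. This simultaneously dissolves the ``main obstacle'' you flagged: because the factor $(1-\tau)$ is already built into the factorization, the generator $\eta^{1-\tau}$ is an anti-unit automatically, with no need to descend from a $w_0$-th power or appeal to any CM compatibility.
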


\begin{proof}  
Set $\theta = \theta_{K_1/k_0, S^{\mathrm{min}}} (0)$.  Proposition \ref{P:theta0divisibility} shows that $(\sigma-1) q \theta = (\sigma-1) q \theta_{1}$. The Integrality Property of $\theta$ implies that $(\sigma-1) q \theta$ is in $\mathbb{Z}[G]$, hence so is $(\sigma-1) q \theta_1$. Lemma \ref{L:factor} then shows that we may factor
\begin{equation*}
	(\sigma-1) q \theta_1 = (\sigma-1)\alpha(1-\tau),
\end{equation*}
with $\alpha$ in $\mathbb{Z}[H]$.  
  
Proposition 2.5 in \cite{bS2012b} gives
\begin{equation*}
	N_{\mathbb{Q} \left( \zeta_{p} \right)/\mathbb{Q}} \left( \chi \left( \alpha \right) \right) = 
	q^{p-2} 2^{(p-1) \left( \left| S_{1} \right| - 1 \right)} p^{\left| S_2 \right|}  \left| \mathfrak{K} \right| 
\end{equation*}
for some subsets $S_1$ and $S_2$ of $S^{\mathrm{min}}$.
Let $\mathcal{O} = \mathbb{Z} \left[ \zeta_{p} \right]$ be the ring of integers in the cyclotomic field $\mathbb{Q} \left( \zeta_{p} \right)$.  Through the isomorphism
\begin{equation*}
	\chi: \mathbb{Z} [H]/N_{H} \rightarrow \mathcal{O},
\end{equation*}
$\mathfrak{K}$ is endowed with an $\mathcal{O}$-module structure.  Hence, we may rewrite the above expression for the norm of $\chi ( \alpha)$ in terms of absolute norms of ideals in $\mathcal{O}$:
\begin{equation*}
	\mathfrak{N} \left( \left( \chi \left( \alpha \right) \right) \right)=
	q^{p-2} \mathfrak{N} \left( 2^{\left| S_{1} \right|-1} (1-\zeta_p)^{\left| S_2 \right|} 
	\Fit_{\mathcal{O}} \left( \mathfrak{K} \right) \right),
\end{equation*}
in which $\Fit_{\mathcal{O}}$ denotes the zeroth Fitting ideal.  Since there is only one prime ideal in $\mathcal{O}$ dividing $p$, we find that $\alpha$ annihilates $\mathfrak{K}_{p}$.

Now $\mathfrak{a}^{\sigma-1}$ represents a class in $\mathfrak{K}_{p}$.  Thus, $\mathfrak{a}^{(\sigma-1) \alpha } = \mathfrak{b} \mathfrak{c} (\eta)$, where $\mathfrak{b}$ is an ideal in $K_{1}$ divisible only by primes in $S_{K_{1}}^{\ns}$, $\mathfrak{c}$ is the lift of an ideal from $k_{1}$ to $K_{1}$, and $\eta$ is in $K_{1}^{\times}$.  Since $\mathfrak{b}$ and $\mathfrak{c}$ are both fixed by $\tau$, it follows that
\begin{equation*}
	 \mathfrak{a}^{(\sigma-1) q \theta_1} = 
	 \mathfrak{a}^{(\sigma-1) \alpha (1-\tau)}
	= \left( \eta^{1-\tau} \right).
\end{equation*}

We see immediately that $N_{K_1/K_0} (\eta^{1-\tau})$ is an anti-unit in $K_0$ and that
\begin{equation*}
	\left(   \N_{K_{1}/K_{0}} \left( \eta^{1-\tau} \right) \right)
	= \mathfrak{a}^{(\sigma-1) q \theta N_{H}} = (1).
\end{equation*}
It follows that $N_{K_{1}/K_{0}} \left( \eta^{1-\tau} \right)$ is a root of unity in $K_0$. Since the $p$-power roots of unity in $K_1$ are not cohomologically trivial, they are all contained in $K_0$. Write $\N_{K_{1}/K_{0}} \left( \eta^{1-\tau} \right) = \zeta_{p^{r}}^{j} \xi^{-p}$,
where $\xi$ is a root of unity in $K_0$ of order relatively prime to $p$.   Then the element 
\begin{equation*}
	\gamma_{\sigma} (\mathfrak{a}) = \xi \eta^{1-\tau}
\end{equation*}
satisfies  conditions (i) and (ii) of the proposition.

Because the element $\gamma_{\sigma} (\mathfrak{a})$ is an anti-unit, conditions (i) and (ii) specify it uniquely up to multiplication by a root of unity $\zeta'$ in $\mu_{1}$ such that $\N_{K_{1}/K_{0}} (\zeta')$ is a $p$-power root of unity.  If $\zeta' = \zeta_{p^{r}}^{k} \xi'$ where $\xi'$ has order relatively prime to $p$, then $N_{K_{1}/K_{0}} \left( \xi' \right) = 1$.  Therefore, since $\zeta_{p^r}$ is in $K_0$, $N_{K_{1}/K_{0}} (\zeta')^{p^{r-1}} = 1$.  It follows that $N_{K_{1}/K_{0}} \left( \gamma_{\sigma} \left(\mathfrak{a} \right) \right)^{p^{r-1}}$ is independent of the choice of $\gamma_{\sigma} (\mathfrak{a})$.

Finally, the factorization $\sigma^n - 1 = (\sigma-1)(\sum_{i=0}^{n-1} \sigma^i)$ shows that for a fixed generator $\sigma$ of $H$, we may choose
\begin{equation*}
	\gamma_{\sigma^n}(\mathfrak{a}) = \gamma_{\sigma}(\mathfrak{a})^{\sum_{i=0}^{n-1} \sigma^i}
\end{equation*}
With this choice,
\begin{equation*}
	N_{K_1/K_0} (\gamma_{\sigma^n}(\mathfrak{a})) = N_{K_1/K_0} (\gamma_{\sigma}(\mathfrak{a}))^n,
\end{equation*}
so the map in the proposition is indeed a well-defined element of $H^1 (H, \mu_p)$.
\end{proof}

\begin{theorem}\label{T:condition1}
Set $\theta = \theta_{K_1/k_0, S^{\mathrm{min}}} (0)$. Assume that no prime ideal of $k_0$ splits in $K_0/k_0$ and ramifies in $k_1/k_0$ and that the $p$-power roots of unity in $K_1$ are not $G$-cohomologically trivial. For every fractional ideal $\mathfrak{a}$ representing a class in  $\Cl_{K_{1}} \otimes \mathbb{Z}_p$, $\mathfrak{a}^{w_1 \theta}$ is principal, generated by an anti-unit.
\end{theorem}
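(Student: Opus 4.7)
The plan is to split $\theta = \theta_0 + \theta_1$ as in Section~\ref{S:simplecase} and treat the two pieces separately. The $\theta_0$ contribution will be controlled by the (unconditionally known) Brumer--Stark conjecture for the quadratic subextension $K_0/k_0$ together with Proposition~\ref{P:quadcondition}, while the $\theta_1$ contribution will be handled via Proposition~\ref{P:torsionunit}. Although neither $w_1\theta_0$ nor $w_1\theta_1$ is obviously integral, the calculation for $\theta_1$ below will exhibit $\mathfrak{a}^{w_1\theta_1}$ as a genuine ideal, and integrality of $w_1\theta$ will then force $w_1\theta_0\in\mathbb{Z}[G]$ as well.

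For the $\theta_1$ piece, I will use Lemma~\ref{L:theta1properties}(i), which gives $N_H\theta_1=0$, together with the ring identity $p-N_H=\sum_{i=0}^{p-1}(1-\sigma^i)$ in $\mathbb{Z}[H]$, to rewrite $pq\theta_1=-\sum_{i=0}^{p-1}(\sigma^i-1)q\theta_1$. Applying Proposition~\ref{P:torsionunit} term by term then expresses $\mathfrak{a}^{pq\theta_1}$ as the principal ideal generated by $\prod_{i}\gamma_{\sigma^i}(\mathfrak{a})^{-1}$, which is an anti-unit since anti-units form a multiplicative group. Because $\mu_{K_1}\otimes\mathbb{Z}_p$ is not $G$-cohomologically trivial, all $p$-power roots of unity lie in $K_0$, so $p\mid w_0$. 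Raising to the positive integer power $w_0/p$ yields $\mathfrak{a}^{w_1\theta_1}$ as a principal ideal with anti-unit generator.

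For the $\theta_0$ piece, Proposition~\ref{P:quadcondition} supplies a factorization $\varepsilon(\mathfrak{a})=\beta^p\zeta$ with $\beta\in K_1^\times$ and $\zeta\in\mu_{K_1}$. Since $\varepsilon(\mathfrak{a})$ is an anti-unit, $|\beta|^p=1$ under every complex embedding, so $\beta$ is itself an anti-unit. Using $\theta_0=\tfrac{1}{p}\tilde\theta N_H$ and the Brumer--Stark relation $\mathfrak{a}^{N_Hw_0\tilde\theta}=(\varepsilon(\mathfrak{a}))$ lifted from $K_0$ to $K_1$, I compute
\begin{equation*}
    (\mathfrak{a}^{w_1\theta_0})^p \;=\; \mathfrak{a}^{qw_0\tilde\theta N_H} \;=\; \bigl(\varepsilon(\mathfrak{a})^q\bigr) \;=\; (\beta^{pq}) \;=\; (\beta^q)^p.
\end{equation*}
Because the fractional ideal group of $K_1$ is torsion-free, uniqueness of $p$-th roots forces $\mathfrak{a}^{w_1\theta_0}=(\beta^q)$, a principal ideal with anti-unit generator. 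Taking the product of the two pieces completes the proof.

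The main obstacle is the unambiguous identification $\mathfrak{a}^{w_1\theta_0}=(\beta^q)$: one must first use Proposition~\ref{P:quadcondition} to realize the Brumer--Stark element for $K_0/k_0$ as a $p$-th power in $K_1^\times$ modulo roots of unity, and then exploit torsion-freeness of the ideal group to extract a $p$-th root of ideals without ambiguity. Once this identification is secured, everything else is formal manipulation of the group ring identities $N_H\theta_1=0$ and $p-N_H=\sum(1-\sigma^i)$ together with the previously established Propositions~\ref{P:quadcondition} and~\ref{P:torsionunit}.
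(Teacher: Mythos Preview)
Your proof is correct and follows essentially the same strategy as the paper: split $\theta=\theta_0+\theta_1$, handle $\theta_1$ via Proposition~\ref{P:torsionunit} and the identity $p-N_H$ (you use the trivial sum form $\sum_{i}(1-\sigma^i)=p-N_H$ rather than the paper's product form $\prod_{i=1}^{p-1}(\sigma^i-1)=p-N_H$, a harmless simplification), and handle $\theta_0$ via Proposition~\ref{P:quadcondition} plus torsion-freeness of the ideal group. One small exposition point: your remark that ``neither $w_1\theta_0$ nor $w_1\theta_1$ is obviously integral'' is unnecessarily cautious, since $p\mid w_0$ and Lemma~\ref{L:theta1properties}(ii) give $w_1\theta_1=(w_0/p)\cdot pq\theta_1\in\mathbb{Z}[G]$ immediately, which is exactly how the paper opens its proof.
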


\begin{proof}
Since $p$ divides $w_0$, Lemma \ref{L:theta1properties} shows that $w_1 \theta_1$ is in $\mathbb{Z}[G]$; hence, the Integrality Property of $\theta$ shows $w_1 \theta_0$ is in $\mathbb{Z}[G]$.  We prove the theorem by showing each of $\mathfrak{a}^{w_1 \theta_0}$ and $\mathfrak{a}^{w_1 \theta_1}$ is principal, generated by an anti-unit.

Choose $\gamma_{\sigma}(\mathfrak{a})$ satisfying $\mathfrak{a}^{(\sigma-1)q \theta_1} = \left( \gamma_{\sigma}(\mathfrak{a}) \right)$ as in Proposition \ref{P:torsionunit}. We compute $\prod_{i=1}^{p-1} ( \sigma^i-1) = p - N_H$ (see \cite{bS2012b} Lemma 2.6) and use Lemma \ref{L:theta1properties} to find
\begin{equation*}
	\mathfrak{a}^{w_1 \theta_1} = \left( \gamma_{\sigma} (\mathfrak{a})^{\frac{w_0}{p} \prod_{i=2}^{p-1}
	(\sigma^i-1)} \right).
\end{equation*}

Next, letting $\mathcal{O}_{K_1}$ denote the ring of integers in $K_1$, Proposition \ref{P:theta0divisibility} shows that
\begin{align*}
	\mathfrak{a}^{pw_1 \theta_0} 
	&= \mathfrak{a}^{w_1 \tilde{\theta} N_H}\\
	&= \left( N_{K_1/K_0} \mathfrak{a} \right)^{q w_0 \theta_{K_0/k_0, S^{\mathrm{min}}}} \mathcal{O}_{K_1}
\end{align*}
Since $K_0$ and $K_1$ have the same number of $p$-power roots of unity,  Proposition \ref{P:quadcondition} shows that we can choose $\varepsilon(\mathfrak{a})$ (defined only up to a factor of a root of unity) so that $\sqrt[p]{\varepsilon(\mathfrak{a})}$ is an anti-unit in $K_1$. It follows that
\begin{equation*}
	\mathfrak{a}^{p w_1 \theta_0} = \left( \sqrt[p]{\varepsilon(\mathfrak{a})}^{\, q} \right)^p.
\end{equation*}
Unique factorization then proves $\mathfrak{a}^{w_1 \theta_0}$ is principal, generated by the anti-unit $\sqrt[p]{\varepsilon(\mathfrak{a})}^{\, q}$.

Finally,
\begin{equation}\label{E:BSgenerator}
	\mathfrak{a}^{w_1 \theta} =  \left( \gamma_{\sigma} (\mathfrak{a})^{\frac{w_0}{p} \prod_{i=2}^{p-2}
	(\sigma^i-1)} \cdot \sqrt[p]{\varepsilon(\mathfrak{a})}^{\, q} \right)\qedhere
\end{equation}
\end{proof}

Our next result shows when the $p$-power roots of unity in $K_1$ have nontrivial cohomology and no prime ideal of $k_0$ splits in $K_0/k_0$ and ramifies in $k_1/k_0$, the $p$-local Brumer-Stark conjecture for $K_{1}/k_{0}$ is equivalent to the equality of two arithmetically defined cohomology classes in $H^1 (H, \mu_p)$.   For each fractional ideal $\mathfrak{a}$ of $K_{1}$ representing a class in $\Cl_{K_{1}} \otimes \mathbb{Z}_p$, we let $\varepsilon (\mathfrak{a})$ be an element in $K_0 \cap K_1^{\times p}$ associated with $N_{K_1/K_0} \mathfrak{a}$ by the Brumer-Stark conjecture for $K_0/k_0$. For each $\sigma$ in $H = \Gal(K_1/K_0)$, we let $\N_{K_{1}/K_{0}} \left( \gamma_{\sigma} (\mathfrak{a}) \right)^{p^{r-1}}$ be the $p$th root of unity from Proposition \ref{P:torsionunit}. For each $\alpha$ in $K_{0} \cap K_1^{\times p}$, we let $\left( \sigma, \alpha \right)_{\Kum}$ denote the Kummer pairing with the element $\sigma$ of $H$.

\begin{theorem}\label{T:recharacterization}
Set $\theta = \theta_{K_1/k_0, S^{\mathrm{min}}} (0)$. Assume that no prime ideal of $k_0$ splits in $K_0/k_0$ and ramifies in $k_1/k_0$ and that the $p$-power roots of unity in $K_1$ are not $G$-cohomologically trivial.    The existence of an element $\varepsilon_{\mathfrak{a}, K_1/k_0, S^{\mathrm{min}}}$ satisfying the conditions of the $p$-local Brumer Stark conjecture for $K_1/k_0$:
	  \renewcommand{\theenumii}{\roman{enumii}}
		\begin{enumerate}
				\item $\mathfrak{a}^{w_1 \theta} = \left( \varepsilon_{\mathfrak{a}, K_1/k_0, S^{\mathrm{min}}} \right)$
	\item $\varepsilon_{\mathfrak{a}, K_1/k_0, S^{\mathrm{min}}}$ is an anti-unit
	\item $\varepsilon_{\mathfrak{a}, K_1/k_0, S^{\mathrm{min}}}$ is $p^r$-Abelian.
	  \end{enumerate}
is equivalent to the equality
		\begin{equation*}
			\left( \sigma, \varepsilon (\mathfrak{a}) \right)_{\Kum}^{q}
			= \N_{K_{1}/K_{0}} \left( \gamma_{\sigma} (\mathfrak{a}) \right)^{w_0/p},
		\end{equation*}
\end{theorem}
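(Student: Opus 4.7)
The plan is to exploit the explicit anti-unit generator
\[
\varepsilon_0 := \gamma_\sigma(\mathfrak{a})^{(w_0/p)\prod_{i=2}^{p-1}(\sigma^i-1)} \cdot \sqrt[p]{\varepsilon(\mathfrak{a})}^{\, q}
\]
of $\mathfrak{a}^{w_1 \theta}$ supplied by equation \eqref{E:BSgenerator}. It already satisfies conditions (i) and (ii) of the $p$-local Brumer-Stark conjecture. Any other candidate differs from $\varepsilon_0$ by a root of unity in $\mu_{K_1}$, so the conjecture reduces to finding some $\zeta \in \mu_{K_1}$ with $\zeta \varepsilon_0$ being $p^r$-Abelian for $K_1/k_0$. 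I will use the standard criterion that $\alpha \in K_1^\times$ is $p^r$-Abelian for $K_1/k_0$ iff $g(\alpha) \alpha^{-N(g)} \in K_1^{\times p^r}$ for every $g \in G$, where $N(g) \in \mathbb{Z}$ satisfies $g(\zeta_{p^r}) = \zeta_{p^r}^{N(g)}$.

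For $g = \tau$ (so $N(\tau) = -1$) the condition is automatic for any anti-unit $\alpha$: the element $\tau(\alpha)\alpha$ lies in the totally real field $k_1$ with every real embedding equal to $|\phi(\alpha)|^2 = 1$, so $\tau(\alpha)\alpha = 1$. The nontrivial case is $g = \sigma \in H$, for which $N(\sigma) = 1$ since $\sigma$ fixes $K_0 \supseteq \mu_{p^r}$. The Kummer factor of $\sigma(\varepsilon_0)/\varepsilon_0$ contributes $(\sigma, \varepsilon(\mathfrak{a}))_{\Kum}^q$. For the cyclotomic factor, the identity $\prod_{i=1}^{p-1}(\sigma^i - 1) = p - N_H$ of \cite{bS2012b} Lemma 2.6 yields
\[
\gamma_\sigma(\mathfrak{a})^{(\sigma - 1)(w_0/p)\prod_{i=2}^{p-1}(\sigma^i - 1)} = \gamma_\sigma(\mathfrak{a})^{(w_0/p)(p - N_H)} = \gamma_\sigma(\mathfrak{a})^{w_0} \cdot N_{K_1/K_0}(\gamma_\sigma(\mathfrak{a}))^{-w_0/p},
\]
and the factor $\gamma_\sigma(\mathfrak{a})^{w_0}$ lies in $K_1^{\times p^r}$ since $p^r \mid w_0$.

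Combining,
\[
\sigma(\varepsilon_0)/\varepsilon_0 \equiv (\sigma, \varepsilon(\mathfrak{a}))_{\Kum}^q \cdot N_{K_1/K_0}(\gamma_\sigma(\mathfrak{a}))^{-w_0/p} \pmod{K_1^{\times p^r}}.
\]
Both factors on the right are $p$th roots of unity (the second by Proposition \ref{P:torsionunit}(ii)), and the only $p$th root of unity lying in $K_1^{\times p^r}$ is $1$, because the $p$-part of $\mu_{K_1}$ is $\mu_{p^r}$ and a $p$th root of unity that were a $p^r$th power in $K_1$ would force an element of $\mu_{p^r}$ whose $p^r$th power equals it, hence equal to $1$. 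So the $\sigma$-condition is equivalent to the displayed Kummer equality. Replacing $\varepsilon_0$ by $\zeta\varepsilon_0$ contributes a factor $\sigma(\zeta)/\zeta$ which is a prime-to-$p$ root of unity (since $\sigma$ fixes $\mu_{p^r} \subset K_0$), so this freedom cannot affect the criterion. The higher powers $\sigma^i$ reduce to $\sigma$ via $\varepsilon_0^{\sigma^i - 1} = (\varepsilon_0^{\sigma - 1})^{1 + \sigma + \cdots + \sigma^{i-1}}$, in parallel with the cocycle $\gamma_{\sigma^i}(\mathfrak{a}) = \gamma_\sigma(\mathfrak{a})^{1 + \sigma + \cdots + \sigma^{i-1}}$ established in the proof of Proposition \ref{P:torsionunit}. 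The main obstacle I expect is the careful bookkeeping needed to verify that the two sides of the asserted equality really lie in $\mu_p$ and that the several layers of root-of-unity ambiguity (in the choice of $\varepsilon(\mathfrak{a})$, of its $p$th root, and of $\gamma_\sigma(\mathfrak{a})$) are consistent with each other modulo $K_1^{\times p^r}$; once that is in place, the equivalence collapses to the group-ring identity above.
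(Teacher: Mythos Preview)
Your proposal is correct and follows essentially the same route as the paper: start from the explicit anti-unit generator $\eta=\varepsilon_0$ of equation \eqref{E:BSgenerator}, reduce the $p^r$-Abelian condition to $\eta^{\sigma-1}\in K_1^{\times p^r}$, and evaluate this using $\prod_{i=1}^{p-1}(\sigma^i-1)=p-N_H$ to obtain the product of $(\sigma,\varepsilon(\mathfrak{a}))_{\Kum}^q$ with $N_{K_1/K_0}(\gamma_\sigma(\mathfrak{a}))^{-w_0/p}$. You are in fact more explicit than the paper about the $\tau$-condition, the reduction from $\sigma^i$ to $\sigma$, and the irrelevance of the root-of-unity ambiguity in the generator --- points the paper handles by citing \cite{bS2012b} --- so the ``main obstacle'' you anticipate is already covered by what you have written.
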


\begin{remark}
That $\varepsilon(\mathfrak{a})$ can be chosen in $K_0 \cap K_1^{\times p}$ is a consequence of $K_0$ and $K_1$ having the same number of $p$-power roots of unity and Proposition \ref{P:quadcondition}. Note that our choice of $\varepsilon(\mathfrak{a})$ only defines it up to a factor of a root of unity in $K_0$ of order dividing $\frac{w_0}{p}$, but the condition on the Kummer pairing is unaffected by such a factor. 
\end{remark}

\begin{proof}
Theorem \ref{T:condition1} shows that properties (i) and (ii) hold without contingency: equation \eqref{E:BSgenerator} shows $\mathfrak{a}^{w_1 \theta} = (\eta)$, where
\begin{equation*}
	\eta = \gamma_{\sigma} (\mathfrak{a})^{\frac{w_0}{p} \prod_{i=2}^{p-1}
	(\sigma^i-1)}  \sqrt[p]{\varepsilon(\mathfrak{a})}^{\, q}.
\end{equation*}
Let $N_{\sigma}$ be an integer for which $\zeta^{\sigma} = \zeta^{N \sigma}$ for all roots of unity $\zeta$ in $\mu_{K_1}$. Saying $K_1 \left( \sqrt[\uproot{2} p^r]{\eta} \right)/k_0$ is Abelian is equivalent to saying $\eta^{\sigma-N \sigma}$ is in $K_{1}^{\times p^{r}}$ 
(see, for instance, the end of the proof of case $A^{\sharp}$ of Theorem 4.2 in \cite{bS2012b}.)  Since $N \sigma \equiv 1 \pmod{p^{r}}$, this is equivalent to
\begin{equation*}
	\eta^{\sigma-1} = \gamma_{\sigma} (\mathfrak{a})^{w_{0} - \frac{w_0}{p} N_{H}} 
	\sqrt[p]{\varepsilon}^{\, q(\sigma-1)} \in K_{1}^{\times p^{r}}.
\end{equation*}
Finally, this is equivalent to
\begin{equation*}
	\left( \sigma, \varepsilon \right)_{\Kum}^q \left( \N_{K_{1}/K_{0}} 
	\left( \gamma_{\sigma} (\mathfrak{a}) \right) \right)^{-\frac{w_0}{p}} \in K_{1}^{\times p^{r}}.
\end{equation*}
Since the quantity on the left is a $p$th root of unity, it is $1$, the condition of the theorem.  
\end{proof}

\begin{remark}
If the $p$-rank of $\mathfrak{C}_0$ is at least $2$, then $(\sigma, \varepsilon(\mathfrak{a}))_{\Kum} = 1$ follows immediately using Proposition \ref{P:theta0divisibility}.  Using Lemma 3.3 in \cite{bS2012b} and Equation \eqref{E:theta1norm}, it is possible to show that $N_{K_1/K_0} (\gamma_{\sigma} (\mathfrak{a}))^{w_0/p} = 1$.  Theorem \ref{T:recharacterization} then shows that the $p$-primary Brumer-Stark conjecture holds under the hypotheses of the theorem and the additional hypothesis $\rk_p(\mathfrak{C}_0) \geq 2$.  This is also a consequence of Theorem 4.2 in \cite{bS2012b}.
\end{remark}

By the above remark, the cohomological condition of Theorem \ref{T:recharacterization} is most interesting when the $p$-rank of $\mathfrak{C}_{0}$ is $1$ (it is impossible under the conditions of \ref{T:recharacterization} for $\mathfrak{C}_0 \otimes \mathbb{Z}_p$ to be trivial --- see \cite{bS2012b} Lemma 3.5). Proposition \ref{P:trivialproperties} shows that this happens in particular for TK extensions.  For those, we will provide an alternative characterization of the $p$-primary Brumer-Stark conjecture.  This will make explicit a connection between the Brumer-Stark conjecture and the coefficients of $\theta_{K_1/k_0, S^{\mathrm{min}}} (0)$.    

\begin{definition}
Let $K_1/k_0$ be a TK extension, and set $p^r = \left| \mu_{K_1} \otimes \mathbb{Z}_p \right|$. For $1 \leq i \leq r$, let $\mu_{p^i}$ be the group of $p^i$th roots of unity in $K_1$. For each integer $t \geq 1$, the \textbf{$n$th modified radical group} associated to the extension $K_1/K_0$ is
\begin{equation*}
	\mathfrak{W}_n = \{ \, \omega \in K_1 \mid \omega^{\sigma-1} \in K_1^{\times p^n} \mu_{p^r} \, \}.
\end{equation*}
\end{definition}

If $1 \leq n_1 \leq n_2$, then $\mathfrak{W}_{n_1} \supseteq \mathfrak{W}_{n_2}$.  Also, $\cap_{n \geq 1} \mathfrak{W}_{n}$ is the usual radical group for the extension, consisting of those $\omega$ for which $\omega^{\sigma-1}$ is in $\mu_p$.  

If $\omega \in \mathfrak{W}_n$ and $\omega^{\sigma-1} = \delta_{\sigma}^{p^n} \zeta$, then we write
\begin{equation*}
	\left( \sigma, \omega \right)_{\Kum} = \zeta \quad \text{ and } \quad\left( \sigma, \omega \right)_{\N} = N_{K_1/K_0} \delta_{\sigma}.
\end{equation*}
Note that there is a unique decomposition $\omega^{\sigma - 1} = \delta_{\sigma}^{p^n} \zeta$ when $n \geq r$, but in general, the symbols need not be well defined. The symbols $\left( \cdot, \cdot \right)_{\Kum}$ and $\left( \cdot, \cdot \right)_{\N}$ have the following properties:

\begin{symbolproperties}\hfill
\begin{enumerate}
	\item Both $\left( \sigma, \omega \right)_{\N}^{p^{r-1}}$ and the product $\left( \sigma, \omega 
		\right)_{\Kum} \left( \sigma, \omega \right)_{\N}^{p^{n-1}}$ are elements of $\mu_p$ and are independent of the decomposition $\omega = \delta_{\sigma}^{p^n} \zeta$.\\
	\item If $n \geq r$, then $\left( \sigma, \omega \right)_{\Kum}$ is in $\mu_p$.\\
	\item The maps $\left( \cdot, \cdot \right)_{\N}^{p^{r-1}}$, $\left( \cdot, \cdot \right)_{\Kum} (\cdot, \cdot)_{\N}^{p^{n-1}}$, and (when $n \geq r$)
		$(\cdot, \cdot)_{\Kum}$ are bihomomorphisms, hence define elements of $H^1 (H, \mu_p)$\\
	\item The maps  $\left( \sigma, \cdot \right)_{\Kum} (\sigma, \cdot)_{\N}^{p^{n-1}}$ and 
		$(\sigma, \cdot)_{\Kum}$ in cases (i) and (ii) respectively
		restrict to the Kummer pairing $(\sigma, \cdot )_{\Kum}$ on the usual radical group.\\
	\item If $n \geq r$, the kernel of $\left( \sigma, \cdot \right)_{\Kum}$
		consists of the $p^r$-Abelian elements for $K_1/K_0$ in $\mathfrak{W}_n$.
\end{enumerate}
\end{symbolproperties}

\begin{lemma}\label{L:modifiedradical}
Let $K_1/k_0$ be a TK extension.  Let $\mathfrak{a}$ be an ideal representing a class in $\Cl_{K_1} \otimes \mathbb{Z}_p$.  Let $p^t = \left| \mathfrak{C}_{0,p} \right|$. Then $\mathfrak{a}^{p^t (1-\tau)} = \left( \omega(\mathfrak{a}) \right)$ for some anti-unit $\omega(\mathfrak{a})$ in the modified radical group $\mathfrak{W}_t$ of $K_1/K_0$.  
\end{lemma}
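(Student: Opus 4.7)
The plan is to first establish that $\mathfrak{a}^{p^t(1-\tau)}$ is principal with an anti-unit generator, and then to verify the modified radical condition $\omega(\mathfrak{a})^{\sigma-1}\in K_1^{\times p^t}\mu_{p^r}$.

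For principality, note that $(1-\tau)[\mathfrak{a}]$ lies in the minus component of $\Cl_{K_1}\otimes\mathbb{Z}_p$, which by Proposition \ref{P:trivialproperties}(ii) is cyclic of order $p^t$. Hence $p^t(1-\tau)$ annihilates $[\mathfrak{a}]$ in $\Cl_{K_1}\otimes\mathbb{Z}_p$; after replacing $\mathfrak{a}$ by a prime-to-$p$ power within its $p$-primary class to clear any residual $p'$-torsion in the minus part of $\Cl_{K_1}$, one obtains $\mathfrak{a}^{p^t(1-\tau)}=(\beta)$ for some $\beta\in K_1^\times$. The ideal identity $(\beta)^{1+\tau}=(1)$ forces $\beta\beta^\tau$ to be a unit of $k_1$ that is totally positive, since $\phi(\beta\beta^\tau)=|\phi(\beta)|^2>0$ for every complex embedding $\phi$ of $K_1$. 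A standard adjustment---choosing $v\in K_1^\times$ with $vv^\tau=\beta\beta^\tau$---yields an anti-unit generator $\omega(\mathfrak{a})=\beta/v$.

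For membership in $\mathfrak{W}_t$, I observe that $(\sigma-1)(1-\tau)[\mathfrak{a}]$ also vanishes in $\Cl_{K_1}\otimes\mathbb{Z}_p$: the class $(1-\tau)[\mathfrak{a}]$ lies in the minus part, which $H=\Gal(K_1/K_0)$ fixes pointwise by Proposition \ref{P:trivialproperties}(ii), so $\sigma-1$ annihilates it. Up to the same prime-to-$p$ adjustment, $\mathfrak{a}^{(\sigma-1)(1-\tau)}=(\eta)$ for some $\eta\in K_1^\times$, and repeating the anti-unit construction I may take $\eta$ to be an anti-unit. Comparing generators of the equal ideals $\bigl(\omega(\mathfrak{a})^{\sigma-1}\bigr)$ and $(\eta^{p^t})$ produces a unit $u\in\mathcal{O}_{K_1}^\times$ with $\omega(\mathfrak{a})^{\sigma-1}=u\eta^{p^t}$. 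Since $\omega(\mathfrak{a})^{\sigma-1}$ and $\eta^{p^t}$ are both anti-units, $u$ is a unit that is simultaneously an anti-unit, hence a root of unity in $\mu_{K_1}$.

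Decomposing $u=\zeta_{p^r}^a\xi$ with $\xi\in\mu_{K_1}$ of order coprime to $p$, bijectivity of the $p^t$-power map on the prime-to-$p$ part of $\mu_{K_1}$ lets me write $\xi=(\xi')^{p^t}$ for some $\xi'\in\mu_{K_1}$. Therefore $\omega(\mathfrak{a})^{\sigma-1}=\zeta_{p^r}^a(\xi'\eta)^{p^t}\in\mu_{p^r}K_1^{\times p^t}$, so $\omega(\mathfrak{a})\in\mathfrak{W}_t$. The main technical obstacle is the anti-unit adjustment---producing $v\in K_1^\times$ with $vv^\tau$ equal to a prescribed totally positive unit of $k_1$. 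The obstruction lies in $\hat{H}^0\bigl(\Gal(K_1/k_1),\mathcal{O}_{K_1}^\times\bigr)$, and its vanishing on totally positive units should follow from the Hasse norm theorem for the quadratic CM extension $K_1/k_1$ (the Archimedean local norm condition is automatic by total positivity), combined with the controlled ramification supplied by the TK hypothesis at the finite places.
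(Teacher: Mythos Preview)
Your overall architecture matches the paper's: show $\mathfrak{a}^{p^t(1-\tau)}$ is principal with an anti-unit generator, do the same for $\mathfrak{a}^{(\sigma-1)(1-\tau)}$, then compare the two generators to land in $\mathfrak{W}_t$. The comparison step and the absorption of the prime-to-$p$ part of the resulting root of unity are fine and essentially identical to the paper's.

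The gap is in your ``anti-unit adjustment''. You need a \emph{unit} $v\in\mathcal{O}_{K_1}^\times$ with $vv^\tau=\beta\beta^\tau$, since otherwise $\omega(\mathfrak{a})=\beta/v$ no longer generates $\mathfrak{a}^{p^t(1-\tau)}$. You place the obstruction correctly in $\hat H^{0}\bigl(\Gal(K_1/k_1),\mathcal{O}_{K_1}^\times\bigr)$, but then invoke the Hasse norm theorem, which only concerns norms from $K_1^\times$; it says nothing about the unit-norm cokernel. Indeed $\beta\beta^\tau=N_{K_1/k_1}(\beta)$ is trivially a global norm, yet for a CM extension the totally positive units of $k_1$ need not lie in $N_{K_1/k_1}(\mathcal{O}_{K_1}^\times)$ (the cokernel is $2$-torsion but can be nonzero; e.g.\ $2+\sqrt3$ in $\mathbb{Q}(\sqrt3,i)/\mathbb{Q}(\sqrt3)$). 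The ``controlled ramification from the TK hypothesis'' does not close this: at the ramified primes above $p$ there is no reason a given unit is a local unit-norm, and even local unit-normness everywhere would not force global unit-normness.

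The paper sidesteps this entirely by working with $\mathfrak{C}_1$ rather than $\bigl(\Cl_{K_1}\otimes\mathbb{Z}_p\bigr)^-$. Since $|\mathfrak{C}_{1,p}|=p^t$, the class of $\mathfrak{a}^{p^t}$ is trivial in $\mathfrak{C}_1$, meaning $\mathfrak{a}^{p^t}=\mathfrak{b}\,\mathfrak{c}\,(\gamma)$ with $\mathfrak{b}$ supported on $S_{K_1}^{\mathrm{ns}}$ and $\mathfrak{c}$ lifted from $k_1$. Both $\mathfrak{b}$ and $\mathfrak{c}$ are $\tau$-fixed, so $\mathfrak{a}^{p^t(1-\tau)}=(\gamma^{1-\tau})$ and $\gamma^{1-\tau}$ is an anti-unit for free. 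The same trick, using $\mathfrak{K}_p=0$, produces the anti-unit $\delta_\sigma(\mathfrak{a})$ with $\mathfrak{a}^{(\sigma-1)(1-\tau)}=(\delta_\sigma(\mathfrak{a}))$. Replacing your norm argument with this $(1-\tau)$ trick on the $\mathfrak{C}_1$ decomposition would complete your proof.
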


\begin{proof}
Proposition \ref{P:trivialproperties} and Lemma 2.1 in \cite{bS2012b} show that $\mathfrak{C}_{0,p}$ and $\mathfrak{C}_{1,p}$ are both isomorphic cyclic  groups with trivial $H$-action.  As $\mathfrak{C}_{1}$ is cyclic of order $p^t$, there exists an anti-unit $\omega(\mathfrak{a})$ in $K_1$ satisfying $\mathfrak{a}^{p^{t}(1-\tau)} = \left( \omega(\mathfrak{a}) \right)$. As $\mathfrak{K}_p$ is trivial, there exists an anti-unit $\delta_{\sigma}(\mathfrak{a})$ in $K_{1}$ such that $\mathfrak{a}^{(\sigma-1)(1-\tau)} = \left( \delta_{\sigma}(\mathfrak{a})\right)$.    The anti-units $\omega(\mathfrak{a})^{\sigma-1} $ and $\delta_{\sigma}(\mathfrak{a})^{p^{t}}$ thus generate the same ideal of $K_1$.  Hence, there exists a root of unity $\zeta$ in $K_1$ such that
\begin{equation}\label{E:differbyzeta}
	\omega(\mathfrak{a})^{\sigma-1} =  \zeta \delta_{\sigma}(\mathfrak{a})^{p^{t}}.
\end{equation}
Adjusting $\zeta$ and $\delta_{\sigma}(\mathfrak{a})$ by roots of unity of order relatively prime to $p$ if necessary, we can arrange for $\zeta$ to be a $p$-power root of unity.  The above equation then shows that $\omega(\mathfrak{a})$ is in $\mathfrak{W}_t$.  
\end{proof}

\begin{theorem}\label{T:bsharprank1}
Let $K_{1}/k_{0}$ be a TK extension with Galois group $G$.  Let $b_0$ and $b_1$ be the coefficients of the identity automorphism $1_G$ in $\theta_0$ and $\theta_1$, so $p b_0 = \zeta_{K_0/k_0, S^{\mathrm{min}}} (1_G, 0)$ and $b_0 + b_1 = \zeta_{K_1/k_0, S^{\mathrm{min}}} (1_G, 0)$.  Let $p^r = \left| \mu_{K_1} \otimes \mathbb{Z}_p \right|$ and $p^t = \left| \mathfrak{C}_{0,p} \right|$.  Fix $\mathfrak{a}$ representing a class in $\Cl_{K_1} \otimes \mathbb{Z}_p$, and let $\omega(\mathfrak{a})$ be the element of $\mathfrak{W}_t$ from Lemma \ref{L:modifiedradical}. The $p$-local Brumer-Stark conjecture for $K_1/k_0$ holds if and only if
\begin{alignat*}{2}
		\left[ \left( \sigma, \omega(\mathfrak{a}) \right)_{\Kum} \left( \sigma, \omega(\mathfrak{a})
	 	\right)_{\N}^{p^{t-1}} \right] \left[ \left( \sigma, \omega(\mathfrak{a}) \right)_{\N}^{p^{r-1}} \right]^{
		p^{t-r} \frac{b_1}{b_0}} &= 1, \quad &\text{if $t  < r$;}\\
		\left( \sigma, \omega(\mathfrak{a}) \right)_{\Kum} \left[ \left( \sigma, \omega(\mathfrak{a}) 
		\right)_{\N}^{p^{r-1}} \right]^{\frac{b_0+b_1}{b_0}}
		&=1, \quad &\text{if $t = r$.}\\
		\left( \sigma, \omega(\mathfrak{a}) \right)_{\Kum} \left[ \left( \sigma, \omega(\mathfrak{a}) 
		\right)_{\N}^{p^{r-1}} \right]^{p^{t-r} \frac{b_1}{b_0}}
		&=1, \quad &\text{if $t > r$.}
\end{alignat*}
\end{theorem}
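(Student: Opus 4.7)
The plan is to begin with Theorem \ref{T:recharacterization}, which identifies the $p$-local Brumer-Stark conjecture for $K_1/k_0$ with the equality $(\sigma, \varepsilon(\mathfrak{a}))_{\Kum}^{q} = \N_{K_1/K_0}(\gamma_{\sigma}(\mathfrak{a}))^{w_0/p}$ in $\mu_p$, and to rewrite both sides in terms of the intrinsic symbols $(\sigma, \omega(\mathfrak{a}))_{\Kum}$ and $(\sigma, \omega(\mathfrak{a}))_{\N}$. By Proposition \ref{P:trivialproperties}, $\mathfrak{C}_{0,p}$ is cyclic of order $p^t$, so it is enough to verify the condition for a single representing ideal.

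To rewrite the left-hand side, I would first combine Proposition \ref{P:theta0divisibility} with the defining relation $\mathfrak{a}^{p^t(1-\tau)} = (\omega(\mathfrak{a}))$ to deduce $\varepsilon(\mathfrak{a}) = \N_{K_1/K_0}(\omega(\mathfrak{a}))^{A}$ up to a root of unity, where $A = 2^{|S^{\mathrm{min}}|-2}|\mathfrak{C}_0|/p^t$ is prime to $p$. Then, using $\omega(\mathfrak{a})^{\sigma-1} = \zeta\delta_\sigma(\mathfrak{a})^{p^t}$ from Lemma \ref{L:modifiedradical} and the telescoping identity $(\sigma-1)\sum_{j=0}^{p-2}(p-1-j)\sigma^j = N_H - p$ in $\mathbb{Z}[H]$, a direct computation yields $(\sigma, \N_{K_1/K_0}\omega(\mathfrak{a}))_{\Kum} = (\sigma, \omega(\mathfrak{a}))_{\Kum}(\sigma, \omega(\mathfrak{a}))_{\N}^{p^{t-1}}$, so that the LHS becomes $\bigl[(\sigma, \omega(\mathfrak{a}))_{\Kum}(\sigma, \omega(\mathfrak{a}))_{\N}^{p^{t-1}}\bigr]^{qA}$.

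For the right-hand side, I apply Lemma \ref{L:factor} to obtain $\alpha \in \mathbb{Z}[H]$ with $(\sigma-1)q\theta_1 = (1-\sigma)\alpha(1-\tau)$. Since $\mathfrak{a}^{(1-\sigma)(1-\tau)} = (\delta_\sigma(\mathfrak{a})^{-1})$, I can write $\gamma_\sigma(\mathfrak{a}) = \delta_\sigma(\mathfrak{a})^{-\alpha}$ up to a root of unity whose norm vanishes upon raising to the $(w_0/p)$-th power. Taking the norm collapses the $H$-action on $\alpha$ to its augmentation, and an Abel-summation using the recurrence $c_i = q(c'_{i-1}-c'_i)$ from the proof of Lemma \ref{L:factor} together with $\sum_i c'_i = 0$ (from Lemma \ref{L:theta1properties}(i)) gives this augmentation equal to $pqc'_{p-1}$, where $c'_{p-1}$ is the coefficient of $\sigma^{p-1}$ in $\theta_1$. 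Thus the RHS equals $(\sigma, \omega(\mathfrak{a}))_{\N}^{-qw_0 c'_{p-1}}$. Proposition \ref{P:samedenominators} together with $N\sigma \equiv 1 \pmod{p^r}$ forces $c'_{p-1}-b_1$ to have $p$-integral denominator, and since $(\sigma, \omega(\mathfrak{a}))_{\N}$ lies in $\mu_{p^r}$ this difference is absorbed after multiplication by $w_0$, so the RHS reduces to $(\sigma, \omega(\mathfrak{a}))_{\N}^{-qw_0 b_1}$.

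Putting these together and extracting the $q$-th root, the Brumer-Stark condition reduces to $\bigl[(\sigma, \omega(\mathfrak{a}))_{\Kum}(\sigma, \omega(\mathfrak{a}))_{\N}^{p^{t-1}}\bigr]^{A} = (\sigma, \omega(\mathfrak{a}))_{\N}^{-w_0 b_1}$. The identities $Ap^{t-1} = w_0 b_0$ and $w_0 b_1/p^{r-1} = Ap^{t-r}b_1/b_0$ let me rewrite the RHS as a power of $(\sigma, \omega(\mathfrak{a}))_{\N}^{p^{r-1}}$ whose exponent lies in $\mathbb{Z}_p$, and the three stated cases emerge naturally: when $t > r$ the factor $(\sigma, \omega(\mathfrak{a}))_{\N}^{p^{t-1}}$ is trivial in $\mu_p$; when $t = r$ it coincides with $(\sigma, \omega(\mathfrak{a}))_{\N}^{p^{r-1}}$ and merges with the RHS; and when $t < r$ the symbol $(\sigma, \omega(\mathfrak{a}))_{\Kum}$ is not independently well-defined, so the combined symbol must be retained. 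The principal obstacle I expect is careful bookkeeping: tracking the sign from the $(1-\sigma)$ factor in Lemma \ref{L:factor} (which governs the sign of the exponent in the three cases), the root-of-unity ambiguity in the choices of $\delta_\sigma(\mathfrak{a})$ and $\gamma_\sigma(\mathfrak{a})$, and verifying that all such ambiguities vanish in the $\mu_p$-valued final expressions.
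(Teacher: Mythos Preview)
Your proposal is correct and follows essentially the same route as the paper: both start from Theorem~\ref{T:recharacterization}, rewrite $(\sigma,\varepsilon(\mathfrak{a}))_{\Kum}$ via the identity $(\sigma-1)\sum_{j}(p-1-j)\sigma^{j}=N_H-p$ and the decomposition $\omega(\mathfrak{a})^{\sigma-1}=\zeta\,\delta_\sigma(\mathfrak{a})^{p^t}$, and then evaluate $\N_{K_1/K_0}(\gamma_\sigma(\mathfrak{a}))^{w_0/p}$ by reducing $\alpha$ to its augmentation. The only noteworthy divergence is in this last step: the paper observes that $\alpha(1-\tau)=q\theta_1+cN_H(1-\tau)$ with $cp\equiv -pqb_1\pmod p$ directly from the integrality of $\alpha$, whereas you compute $\mathrm{aug}(\alpha)=pqc'_{p-1}$ by Abel summation and then invoke the Hayes congruence (Proposition~\ref{P:samedenominators}) to replace $c'_{p-1}$ by $b_1$. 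Both arguments are valid; the paper's is slightly more economical since it lands on $b_1$ immediately, while yours makes the dependence on the coefficients of $\theta_1$ more explicit at the cost of an extra appeal to Proposition~\ref{P:samedenominators}.
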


\begin{remark}
Equation \eqref{E:quadL} and the following proof will show that $p^{r-t} \frac{b_1}{b_0}$ is in $\mathbb{Z}_p^{\times}$. The expressions were written in this complicated fashion to make explicit the appearance of the bihomomorphisms from above.  Each equation in the theorem expresses the triviality of a certain class in $H^1 (H, \mu_p)$. 
\end{remark}

\begin{proof}
First, observe using Proposition \ref{P:theta0divisibility} that $\frac{w_0}{p^{t-1}} b_0$ is an integer  prime to $p$.  Also, compute $(\sigma-1)(\sum_{i=0}^{p-1} (p-1-i) \sigma^i) = N_H-p$.   Define $\zeta$ and $\delta_{\sigma} (\mathfrak{a})$ through the relation $\omega(\mathfrak{a})^{\sigma-1} = \zeta \delta_{\sigma} (\mathfrak{a})^{p^t}$ (see Equation \eqref{E:differbyzeta}). The ideal in $K_1$ generated by the element $\varepsilon (\mathfrak{a})$ associated with the ideal $N_{K_1/K_0} \mathfrak{a}$ by the Brumer-Stark conjecture for $K_0/k_0$ and $S^{\mathrm{min}}$ is
\begin{equation*}
	\left( \varepsilon (\mathfrak{a}) \right) = \mathfrak{a}^{N_H p w_0 b_0 (1-\tau)} =
	\left( \left( \omega(\mathfrak{a})^p \delta_{\sigma}(\mathfrak{a})^{p^{t}\sum_{j=0}^{p-1} (p-1-j) \sigma^{j}} 
	 \right)^{\frac{w_0}{p^{t-1}} b_0} \right).
\end{equation*}
The choice of $\varepsilon (\mathfrak{a})$ in $K_0 \cap K_1^{\times p}$ per the remark following the statement of Theorem \ref{T:recharacterization} can then be the generator on the right.  Then
\begin{align*}
	\left( \sigma, \varepsilon (\mathfrak{a}) \right)_{\Kum} &= \left[ \omega(\mathfrak{a})^{\sigma-1} \cdot
	\delta_{\sigma}(\mathfrak{a})^{p^{t-1}(\sigma-1)\sum_{j=0}^{p-1} (p-1-j) \sigma^{j}} 
	\right]^{\frac{w_0}{p^{t-1}} b_0}\\
	&= \left[ \zeta \delta_{\sigma}(\mathfrak{a})^{p^t} \cdot \delta_{\sigma}(\mathfrak{a})^{p^{t-1} N_{H}-p^t}
	 \right]^{\frac{w_0}{p^{t-1}} b_0}\\
	 &= \left[ \zeta \N_{K_{1}/K_{0}} \left( \delta_{\sigma}(\mathfrak{a}) \right)^{p^{t-1}}
	 \right]^{\frac{w_0}{p^{t-1}} b_0}\\
	 &= \left[ \left( \sigma, \omega(\mathfrak{a}) \right)_{\Kum} \left( \sigma, \omega(\mathfrak{a})
	 \right)_{\N}^{p^{t-1}} \right]^{\frac{w_0}{p^{t-1}} b_0}
\end{align*}

We will now express the $p$th root of unity $N_{K_1/K_0} \left( \gamma_{\sigma} (\mathfrak{a}) \right)^{p^{r-1}}$ from Proposition \ref{P:torsionunit} in terms of $\delta_{\sigma} (\mathfrak{a})$.  Write 
 \begin{equation*}
 	(\sigma-1) q \theta_1 = (\sigma-1) \alpha (1-\tau)
\end{equation*}
with $\alpha$ in $\mathbb{Z}[H]$ as in the proof of  Proposition \ref{P:torsionunit}.  
By definition, an element $\gamma_{\sigma} (\mathfrak{a})$ as given by Proposition \ref{P:torsionunit} generates the ideal
\begin{equation*}
	\left( \gamma_{\sigma} (\mathfrak{a}) \right) = \mathfrak{a}^{(\sigma-1)(1-\tau) \alpha}
	= \left( \delta_{\sigma}(\mathfrak{a})^{\alpha} \right).
\end{equation*}
Equation \eqref{E:differbyzeta} shows that $ N_{K_1/K_0} (\delta_{\sigma} (\mathfrak{a}))$ is a $p$-power root of unity.  Thus, we may choose $\gamma_{\sigma} (\mathfrak{a})$ as per Proposition \ref{P:torsionunit} to be $\delta_{\sigma}(\mathfrak{a})^{\alpha}$.  Then 
\begin{equation*}
	N_{K_1/K_0} \left( \gamma_{\sigma} (\mathfrak{a}) \right)^{p^{r-1}} = \N_{K_{1}/K_{0}} \left( \delta_{\sigma}(\mathfrak{a}) \right)^{p^{r-1} 
	\alpha}.
\end{equation*}

The condition $\left( \sigma, \varepsilon \right)_{\Kum}^q = N_{K_1/K_0} \left( \gamma_{\sigma} (\mathfrak{a}) \right)^{w_0/p}$ of Theorem \ref{T:recharacterization} is thus equivalent to
\begin{equation}\label{E:characterrelation}
	\left[ \left( \sigma, \omega(\mathfrak{a}) \right)_{\Kum} \left( \sigma, \omega(\mathfrak{a})
	 \right)_{\N}^{p^{t-1}} \right]^{\frac{w_1}{p^{t-1}} b_0}
	= \N_{K_{1}/K_{0}} \left( \delta_{\sigma}(\mathfrak{a}) \right)^{  \frac{w_0}{p} \alpha}
\end{equation}

Lemma \ref{L:theta1properties} shows that $pq b_1$ is an integer, and Proposition \ref{P:theta1divisibility} shows that it is relatively prime to $p$ (note that $q$ is relatively prime to $p$ since $\mu_{K_1} \otimes \mathbb{Z}_p$ is not cohomologically trivial). 

The definition of $\alpha$ shows that $q \theta_1$ and $\alpha (1-\tau)$ differ by a multiple of $N_H (1-\tau)$, say 
\begin{equation*}
	\alpha (1-\tau) = q \theta_1 + c N_H (1-\tau).
\end{equation*}  As $\alpha$ is in $\mathbb{Z}[H]$, $cp \equiv -pqb_1 \pmod{p}$.  Applying $N_H$ to both sides of the above equation then yields, using Lemma \ref{L:theta1properties}
\begin{equation*}
	\alpha N_H (1-\tau) = c p N_H (1-\tau) \equiv -pq b_1 N_H (1-\tau) \pmod{p \mathbb{Z}[G]},
\end{equation*} 
hence $\alpha N_H \equiv -p q b_1 N_H \pmod{p N_H \mathbb{Z}[H]}$.

Since $N_{K_1/K_0} \left( \delta_{\sigma} (\mathfrak{a}) \right)^{\frac{w_0}{p}}$ is a $p$th root of unity, we may rewrite the right side of Equation \eqref{E:characterrelation}:
\begin{align*}
	\left[ \left( \sigma, \omega(\mathfrak{a}) \right)_{\Kum} \left( \sigma, \omega(\mathfrak{a})
	 \right)_{\N}^{p^{t-1}} \right]^{\frac{w_1}{p^{t-1}} b_0}
	&= \delta_{\sigma} (\mathfrak{a})^{\frac{w_0}{p} \alpha N_H}\\
	&= N_{K_1/K_0} \left( \delta_{\sigma} (\mathfrak(a)) \right)^{-w_1 b_1}
\end{align*} 
  By definition, $\left( \sigma, \omega(\mathfrak{a})
	 \right)_{\N} = N_{K_1/K_0} \left( \delta_{\sigma} (\mathfrak{a}) \right)$.  Thus,
\begin{equation*}
		\left[ \left( \sigma, \omega(\mathfrak{a}) \right)_{\Kum} \left( \sigma, \omega(\mathfrak{a})
	 \right)_{\N}^{p^{t-1}} \right]^{\frac{w_1}{p^{t-1}} b_0}  \left[ \left( \sigma, \omega(\mathfrak{a})
	 \right)_{\N}^{p^{r-1}} \right]^{\frac{w_1}{p^{r-1}} b_1} = 1
\end{equation*}
Observe using Proposition \ref{P:theta0divisibility} and Lemma \ref{L:theta1properties} that $\frac{w_1}{p^{t-1}} b_0$ and $\frac{w_1}{p^{r-1}} b_1$ are in $\mathbb{Z}_p^{\times}$.  Taking $\frac{w_1}{p^{t-1}} b_0$-th roots of both sides gives
\begin{equation*}
		\left[ \left( \sigma, \omega(\mathfrak{a}) \right)_{\Kum} \left( \sigma, \omega(\mathfrak{a})
	 \right)_{\N}^{p^{t-1}} \right] \left[ \left( \sigma, \omega(\mathfrak{a})
	 \right)_{\N}^{p^{r-1}} \right]^{p^{t-r} \frac{b_1}{b_0}} = 1
\end{equation*}
The theorem now follows since $(\sigma, \omega(\mathfrak{a}))_N^{p^{t-1}} = 1$ if $t > r$.
\end{proof}

We now prove our main result:  
\begin{theorem1point3}
Let $K_1/k_0$ be a TK extension for which the $p$-local Brumer-Stark Conjecture holds.  Set $\left| \mu_{K_1} \otimes \mathbb{Z}_p \right| = p^r$. The following are equivalent:
\begin{enumerate}
	\item $w_{1} \theta_{K_1/k_0, S^{\mathrm{min}}} (0)$ is in $p^r \mathbb{Z}[G]$
	\item As $\mathfrak{a}$ runs through classes in $\left( \Cl_{K_1} \otimes \mathbb{Z}_p \right)^{-}$, the elements $\varepsilon_{\mathfrak{a}, K_1/k_1, S_{k_1}^{\mathrm{ns}}}$ can be chosen so that their $p^r$th roots generate a cyclic extension of $k_0$ of degree $2p^{r+1}$, unramified away from $p$ and linearly disjoint over $k_0$ from the cyclotomic $\mathbb{Z}_p$-extension.
\end{enumerate} 
For conditions (i) and (ii) to hold, it is necessary that there exist $G$-isomorphisms
\begin{equation*}
	\left( \Cl_{K_0} \otimes \mathbb{Z}_p \right)^- \cong \left( \Cl_{K_1} \otimes \mathbb{Z}_p \right)^- \cong \mu_{K_1} \otimes \mathbb{Z}_p
\end{equation*}
\end{theorem1point3}

\begin{proof}
(i) $\rightarrow$ (ii):  Set $p^t = \left| \mathfrak{C}_0 \right|$.  Since $\mathfrak{K}_p$ is trivial, Proposition \ref{P:theta1divisibility} shows that the normalized $p$-valuations of the coefficients of $\theta_1$ are all $-1$.  Proposition \ref{P:theta0divisibility} shows that  $r=t$.  The necessary condition is then a consequence of Lemma 2.1 in \cite{bS2012b} and Proposition \ref{P:trivialproperties} (ii).

Now apply the result of Theorem \ref{T:bsharprank1}.  Choos an ideal $\mathfrak{a}$ representing a class of order $p^t$ in $\left( \Cl_{K_1} \otimes \mathbb{Z}_p \right)^-$, let $\omega(\mathfrak{a})$ be the element of $\mathfrak{W}_t$ from Lemma \ref{L:modifiedradical}.  Since the $p$-local Brumer-Stark conjecture for $K_1/k_0$ holds, 
\begin{equation*}
	\left( \sigma, \omega(\mathfrak{a}) \right)_{\Kum} \left[ \left( \sigma, \omega(\mathfrak{a}) 
	\right)_{N}^{p^{r-1}} \right]^{\frac{b_0+b_1}{b_0}} = 1
\end{equation*}
Condition (i) says precisely that $b_0 + b_1$ is $p$-integral, and thus the exponent on the right is in $p \mathbb{Z}_p$.  It follows that $\left( \sigma, \omega(\mathfrak{a}) \right)_{\Kum} = 1$.  By property (v) of the symbol $\left( \cdot, \cdot \right)_{\Kum}$, the field $L = K_1 (\sqrt[p^r]{\omega(\mathfrak{a})})$ is Abelian over $k_0$ (not just $K_0$, since $\omega(\mathfrak{a})$ is an anti-unit). 

As $\mathfrak{a}$ has order $p^t$ in $\left( \Cl_{K_1} \otimes \mathbb{Z}_p \right)^-$ and $\left(\omega(\mathfrak{a}) \right) = \mathfrak{a}^{p^t (1-\tau)}$, $\omega(\mathfrak{a})$ is not in $K_1^{\times p} \mu_{K_1}$. Thus, $\left[ L \colon K_1 \right] = p^r$ and $L$ is disjoint over $K_1$ from the cyclotomic $\mathbb{Z}_p$-extension of $K_1$.

If $\mathfrak{b}$ represents another class in $ \left( \Cl_{K_1} \otimes \mathbb{Z}_p \right)^-$, then there exists an exponent $c$ such that $\mathfrak{b}$ and $\mathfrak{a}^c$ differ by a factor of a principal ideal.  Therefore, $\omega(\mathfrak{b})$ can be chosen so that it differs from $\omega(\mathfrak{a})^c$ by an element of $K_1^{\times p^r}$.  It follows that the numbers $\omega(\mathfrak{a})$ as $\mathfrak{a}$ runs through the ideals representing classes in $A_1$ can all be chosen so that their $p$th roots lie inside of $L$.

Next, we prove the claim:
\begin{claim}
The Galois group of the extension $L/K_0$ is not isomorphic to $\mathbb{Z}/p^r \mathbb{Z} \times \mathbb{Z}/p \mathbb{Z}$.
\end{claim}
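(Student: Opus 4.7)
Suppose for contradiction that $\Gal(L/K_0) \cong \mathbb{Z}/p^r\mathbb{Z} \times \mathbb{Z}/p\mathbb{Z}$. Then $\Gal(L/K_0)$ has exponent $p^r$, and since $\mu_{p^r} \subset K_0$ for a TK extension, $L$ is contained in a Kummer exponent-$p^r$ extension of $K_0$. Choose an order-$p$ subgroup $B \subset \Gal(L/K_0)$ complementary to $\Gal(L/K_1)$ (such a complement exists precisely because of the direct product structure). Let $L'$ be the fixed field of $B$; then $L' \cap K_1 = K_0$, $L = L' K_1$, and $L'/K_0$ is cyclic of order $p^r$. By Kummer theory over $K_0$, I can write $L' = K_0(\sqrt[p^r]{\omega_0})$ for some $\omega_0 \in K_0^\times$.

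Since also $L = K_1(\sqrt[p^r]{\omega})$, Kummer theory over $K_1$ (with $\mu_{p^r} \subset K_1$) forces $\omega$ and $\omega_0$ to generate the same cyclic subgroup of $K_1^\times/K_1^{\times p^r}$; after replacing $\omega_0$ by $\omega_0^j$ for a suitable $j$ coprime to $p$ (which does not affect $L'$), I may write $\omega = \omega_0 \cdot u^{p^r}$ for some $u \in K_1^\times$.

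Now I apply $N_{K_1/K_0}$ to the ideal identity $(\omega) = \mathfrak{a}^{p^r(1-\tau)}$ from Lemma \ref{L:modifiedradical}. Since $\omega_0$ is fixed by $H = \Gal(K_1/K_0)$, this yields $\bigl(\omega_0^p \cdot N_{K_1/K_0}(u)^{p^r}\bigr) = \mathfrak{A}^{p^r(1-\tau)}$ in $K_0$, where $\mathfrak{A} := N_{K_1/K_0}(\mathfrak{a})$. Setting $y = \omega_0 \cdot N_{K_1/K_0}(u)^{p^{r-1}} \in K_0^\times$, I obtain $(y)^p = \mathfrak{A}^{p^r(1-\tau)} = \bigl(\mathfrak{A}^{p^{r-1}(1-\tau)}\bigr)^p$; cancelling $p$-th powers of fractional ideals by unique factorization in $K_0$ gives $(y) = \mathfrak{A}^{p^{r-1}(1-\tau)}$. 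Hence $\mathfrak{A}^{p^{r-1}(1-\tau)}$ is principal, i.e.\ $p^{r-1}(1-\tau)[\mathfrak{A}] = 0$ in $\Cl_{K_0}$.

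Projecting this relation to $(\Cl_{K_0} \otimes \mathbb{Z}_p)^-$, on which $1-\tau$ acts as multiplication by $2$, yields $2 p^{r-1} [\mathfrak{A}]^- = 0$. But $\mathfrak{a}$ represents a class of exact order $p^t = p^r$ in $(\Cl_{K_1} \otimes \mathbb{Z}_p)^-$ by construction, and Proposition \ref{P:trivialproperties}(ii) exhibits $N_{K_1/K_0}$ as a $G$-isomorphism between the minus parts, so $[\mathfrak{A}]^-$ also has exact order $p^r$. The vanishing $2 p^{r-1} [\mathfrak{A}]^- = 0$ would then force $p^r \mid 2 p^{r-1}$, i.e.\ $p \mid 2$, impossible for odd $p$. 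This contradiction proves the claim. I expect the subtlest technical step is the Kummer-theoretic reduction in the first two paragraphs --- in particular, extracting the complementary Kummer generator $\omega_0 \in K_0^\times$ from the assumed product structure of $\Gal(L/K_0)$; the remaining ideal-class computation is elementary.
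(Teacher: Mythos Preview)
Your proof is correct and takes a genuinely different, more direct route than the paper's.

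Both arguments begin the same way: assume the direct-product structure, pick an index-$p^r$ subfield $L'$ (the paper calls it $F$) disjoint from $K_1$ over $K_0$, and write $L' = K_0(\sqrt[p^r]{\omega_0})$ via Kummer theory. From here the paths diverge. The paper analyzes the ideal $(\omega_0)$ inside $K_0$, writes it as $\mathfrak{b}^{p^r}\mathfrak{c}$ with $\mathfrak{c}$ supported on ramified primes, and then splits into cases according to whether $\mathfrak{b}^{p^{r-1}(1-\tau)}$ is trivial in $\mathfrak{C}_0$; in each case it invokes either the disjointness of $L$ from the cyclotomic $\mathbb{Z}_p$-extension or Lemma~3.5 of \cite{bS2012b} (the capitulation statement) to reach a contradiction. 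Your argument instead transports the relation $(\omega)=\mathfrak{a}^{p^r(1-\tau)}$ down to $K_0$ via $N_{K_1/K_0}$, extracts a $p$th root of the resulting ideal equation, and reads off directly that $N_{K_1/K_0}(\mathfrak{a})^{p^{r-1}(1-\tau)}$ is principal; this contradicts the TK hypothesis that the norm induces an isomorphism on the minus $p$-parts, since $[\mathfrak{a}]^-$ has exact order $p^r$. Your approach is shorter, avoids the case split, and does not need the external capitulation lemma or the disjointness from the cyclotomic tower; it exploits the TK isomorphism on class groups more efficiently. The paper's approach, on the other hand, also arranges for $\omega_0$ to be an anti-unit (using that $L/k_0$ is Abelian), which it needs for its ideal analysis but which your norm computation renders unnecessary.
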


Accepting the claim for now, the implication (i) $\rightarrow$ (ii) follows immediately from Tate's formula  for $L_{K_1/k_1, S_{k_1}^{\mathrm{ns}}}(0)$ (Equation \eqref{E:quadL} in Proposition \ref{P:theta0divisibility}):
\begin{equation*}
	\mathfrak{a}^{w_1 \theta_{K_1/k_1, S_{k_1}^{\mathrm{ns}}}} = \mathfrak{a}^{2^{\left| S_{k_1}^{\mathrm{ns}} \right|-2} \left| \mathfrak{C}_1
	\right| (1-\tau)} = \left( \omega(\mathfrak{a})^{2^{\left| S_{k_1}^{\mathrm{ns}} \right|-2} \frac{\left| \mathfrak{C}_1 
	\right|}{p^t}} \right).
\end{equation*}
Choose $\varepsilon \left( \mathfrak{a}, K_1/k_1, S_{k_1}^{\mathrm{ns}} \right)$ to be the generator on the right.
The exponent on this generator is an integer relatively prime to $p$, so $\sqrt[p^r]{\omega(\mathfrak{a})}$ and $\sqrt[p^r]{\varepsilon \left( \mathfrak{a}, K_1/k_1, S_{k_1}^{\mathrm{ns}} \right)}$ generate the same extension of $K_1$.

\begin{proof}[Proof of Claim]
Assume, to reach a contradiction, that $\Gal(L/K_0)$ is isomorphic to $\mathbb{Z}/p^r \mathbb{Z} \times \mathbb{Z}/p \mathbb{Z}$. Since $L/K_1$ is cyclic, there is a unique subfield of $L$ of degree $p^{r-1}$ over $K_1$. By assumption, $\Gal (L/K_0)$ has multiple subgroups of order $p$, so there exists a field $F$ not containing $K_1$, Abelian over $k_0$,  with $K_0 \subset F \subset L$ and $\left[ F \colon K_0 \right] = p^r$. Since $F$ is abelian over $k_0$, there exists an anti-unit $\alpha$ in $\left( K_0^{\times} \right)^{1-\tau}$ for which $K_0 \left( \sqrt[\uproot{2} p^{r}]{\alpha} \right) = F$.  It follows that $K_1 \left( \sqrt[\uproot{2} p^r]{\alpha} \right) = L$, since otherwise, $\alpha$ would be a $p$th power in $K_1$, and then $K_1 = K_0 \left( \sqrt[p]{\alpha} \right)$ would be a subfield of $F$. 

There is thus an integer $c$ relatively prime to $p$ such that $\alpha$ differs from $\omega(\mathfrak{a})^c$ by a factor of a $p^r$th power. By definition, $\omega(\mathfrak{a})^c$ is the $p^r$th power of an ideal, so the ideal in $K_0$ generated by $\alpha$ can be written as $\mathfrak{b}^{p^r} \mathfrak{c}$ for some ideal $\mathfrak{c}$ in $K_0$ supported at places that ramify in $K_1/K_0$.

Because no prime ideal of $k_0$ splits in $K_0/k_0$ and ramifies in $k_1/k_0$, the ideal $\mathfrak{c}^{1-\tau}$ is trivial.  Applying $1-\tau$ to $(\alpha) = \mathfrak{b}^{p^r} \mathfrak{c}$ and recalling that $\alpha$ is in $\left( K_1^{\times} \right)^{(1-\tau)}$, we obtain $(\alpha^2) = \mathfrak{b}^{p^r (1-\tau)}$. If $\mathfrak{b}^{p^{r-1} (1-\tau)}$ were trivial in $\mathfrak{C}_0$, then applying $1-\tau$, we would find $\mathfrak{b}^{2p^{r-1} (1-\tau)} = \left( \beta^{1-\tau} \right)$ for some $\beta$ in $K_0^{\times}$.  Then $\alpha^4 = \beta^{p(1-\tau)} \zeta$ for some root of unity $\zeta$, so $K_1 (\sqrt[p]{\alpha^4})$ would be a degree $p$ extension of $K_1$ contained in both $L$ and the $p$-cyclotomic $\mathbb{Z}_p$-extension of $K_1$, a contradiction.  The equality $(\alpha^2) = \mathfrak{b}^{p^r(1-\tau)}$ thus shows the class of $\mathfrak{b}^{p^{r-1}(1-\tau)}$ has order $p$ in $\mathfrak{C}_0$.  

Lemma 3.5 in \cite{bS2012b} shows that the lift of $\mathfrak{b}^{p^{r-1}(1-\tau)}$ to $K_1$ is principal.  Applying $p (1-\tau)$, we find there is an element $\xi$ in $K_1$ such that $(\alpha^4) = (\xi^{p(1-\tau}))$, and hence $\omega(\mathfrak{a})^{4c}$ differs from $\xi^{p(1-\tau)}$ by an element of $K_1^{\times p^r} \mu_{K_1}$.  Thus, $K_1 \left( \sqrt[p]{\omega(\mathfrak{a})} \right)$ is a degree $p$ extension of $K_1$ contained in both $L$ and the $p$-cyclotomic extension of $K_1$, a contradiction.  We have now shown that $L/K_0$ is cyclic of order $p^{r+1}$. \noqed
\end{proof}

(ii) $\rightarrow$ (i):   Let $\mathfrak{a}$ be a prime ideal not dividing $p$ representing a class of order $p^t$ in $\mathfrak{C}_1$, so $\mathfrak{a}^{\tau} \neq \mathfrak{a}$.  Proposition \ref{P:theta0divisibility} shows that the power of $\mathfrak{a}$ dividing $\varepsilon_{\mathfrak{a}, K_1/k_1, S_{k_1}^{\mathrm{ns}}}$ is $2^{\left| S_{k_1}^{\mathrm{ns}} \right| -2} \left| \mathfrak{C}_1 \right|$.  As $\mathfrak{a}$ is unramified in the extension of $K_1$ generated by the $p^r$th root of $\varepsilon_{\mathfrak{a}, K_1/k_1, S_{k_1}^{\mathrm{ns}}}$, it follows that $p^r$ must divide $\left| \mathfrak{C}_1 \right|$.  Thus, $t \geq r$.

Let $L$ be the extension of $K_1$ in statement (ii).  Proposition \ref{P:theta0divisibility} and our assumption (ii) imply the generator $\omega(\mathfrak{a})$ of $\mathfrak{a}^{p^t(1-\tau)}$ can be chosen to make $\sqrt[\uproot{2} p^r]{\omega(\mathfrak{a})}$ generate $L/K_1$.  As $t \geq r$ and $L$ is Abelian over $K_0$, Property (v) of $(\cdot, \cdot)_{\Kum}$ shows $\left( \sigma, \omega(\mathfrak{a}) \right)_{\Kum} = 1$.  Theorem \ref{T:bsharprank1} then yields:
\begin{equation*}
	1 = 
	\begin{cases}
		\left( \sigma, \omega (\mathfrak{a}) \right)_{\N}^{ p^{t-1} \frac{b_0+b_1}{b_0}}, \quad &\text{if $t = r$.}\\
		\left( \sigma, \omega (\mathfrak{a}) \right)_{\N}^{ p^{t-1} \frac{b_1}{b_0}}, \quad &\text{if $t >
		r$.}\\
	\end{cases}
\end{equation*}
Lemma \ref{L:trivialnormsymbol} below will show that $\left( \sigma, \omega(\mathfrak{a}) \right)_{\N}^{p^{r-1}} \neq 1$.  The theorem follows immediately, since when the above exponents are considered in $\mathbb{Z}_p$, their $p$-valuations are $r-1$ except the first one when $b_0 + b_1$ is $p$-integral, i.e., when the coefficient of the identity map in $\theta_{K_1/k_0, S^{\mathrm{min}}}(0)$ is $p$-integral.  The theorem now follows from Proposition \ref{P:samedenominators}.
\end{proof}

\begin{lemma}\label{L:trivialnormsymbol}
Let $K_1/k_0$ be a TK extension.  Set  $\left| \mu_{K_1} \otimes \mathbb{Z}_p \right| = p^r$ and $\left| \mathfrak{C}_{1,p} \right| = p^t$, and assume that $t \geq r$. Let $\mathfrak{a}$ be an ideal representing a class of order $p^t$ in $\mathfrak{C}_1$, and let $\omega (\mathfrak{a})$ be the element of the modified radical group $\mathfrak{W}_t$ of $K_1/K_0$ from Lemma \ref{L:modifiedradical}.  If $\left( \sigma, \omega(\mathfrak{a}) \right)_{\Kum} = 1$, then $\left( \sigma, \omega(\mathfrak{a}) \right)_{\N}^{p^{r-1}} \neq 1$.
\end{lemma}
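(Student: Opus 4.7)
The strategy is proof by contradiction: assume in addition that $\left(\sigma, \omega(\mathfrak{a})\right)_{\N}^{p^{r-1}} = 1$, and derive a contradiction from the TK hypothesis together with the fact that $L := K_1(\sqrt[p^r]{\omega(\mathfrak{a})})$ is linearly disjoint over $K_1$ from the cyclotomic $\mathbb{Z}_p$-extension of $K_1$. The argument closely parallels the Claim proved inside the $(\mathrm{i}) \Rightarrow (\mathrm{ii})$ direction of Theorem \ref{T:main}, but takes the contradictory triviality of the norm symbol as the starting point rather than condition (i).

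First I would set up the Kummer descent. Since $t \geq r$ and $(\sigma, \omega(\mathfrak{a}))_{\Kum} = 1$, the decomposition from Lemma \ref{L:modifiedradical} simplifies to $\omega(\mathfrak{a})^{\sigma-1} = \delta^{p^t}$ with $\delta$ unique up to a factor in $\mu_{p^r}$. The hypothesis $(\sigma, \omega(\mathfrak{a}))_{\N}^{p^{r-1}} = 1$ combined with $N_{K_1/K_0}(\mu_{p^r}) = \mu_{p^{r-1}}$ permits adjusting $\delta$ so that $N_{K_1/K_0}(\delta) = 1$; Hilbert 90 then produces $\gamma \in K_1^\times$ with $\delta = \gamma^{\sigma-1}$, so $u := \omega(\mathfrak{a})/\gamma^{p^t}$ lies in $K_0^\times$. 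Because $t \geq r$, the factor $\gamma^{p^t}$ is a $p^r$-th power in $K_1^\times$, hence $L = K_1(\sqrt[p^r]{u})$; setting $L_0 := K_0(\sqrt[p^r]{u})$, the equality $[L:K_1] = p^r$ (a consequence of $\mathfrak{a}$ having order exactly $p^t$) forces $L_0$ to be linearly disjoint from $K_1$ over $K_0$, with $L = K_1 L_0$. Property (v) together with the antiunit relation $\omega^\tau = \omega^{-1}$ shows that $L/k_0$ is Abelian, from which $L_0/k_0$ is Abelian as well. I would then take the antiunit representative $\alpha := u^{1-\tau} = \omega(\mathfrak{a})^2 / \gamma^{p^t(1-\tau)} \in (K_0^\times)^{1-\tau}$, which generates the same cyclic subgroup as $u$ in $K_0^\times/(K_0^\times)^{p^r}$, giving $L_0 = K_0(\sqrt[p^r]{\alpha})$ and $\alpha \equiv \omega(\mathfrak{a})^2 \pmod{(K_1^\times)^{p^r}}$.

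The ideal-theoretic analysis of the Claim now takes over. Writing $(\alpha) = \mathfrak{b}^{p^r}\mathfrak{c}$ in $K_0$ with $\mathfrak{c}$ supported at primes of $K_0$ ramified in $K_1/K_0$, the TK hypothesis (no prime of $k_0$ splits in $K_0/k_0$ and ramifies in $k_1/k_0$) forces each such $P$ to be fixed by $\tau$, so $\mathfrak{c}^{1-\tau} = (1)$ and $(\alpha^2) = \mathfrak{b}^{p^r(1-\tau)}$. The class of $\mathfrak{b}^{p^{r-1}(1-\tau)}$ in $\mathfrak{C}_0$ therefore has order dividing $p$. The dichotomy closes just as in the Claim: either the $S_{K_0}^{\ns}$-supported part and the lifted-from-$k_0$ part of a representative vanish under $1-\tau$, yielding $\mathfrak{b}^{2p^{r-1}(1-\tau)} = (\gamma_0^{1-\tau})$ for some $\gamma_0 \in K_0^\times$; or Lemma 3.5 of \cite{bS2012b} provides $\xi \in K_1^\times$ with $\mathfrak{b}^{p^{r-1}(1-\tau)}\mathcal{O}_{K_1} = (\xi)$. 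In either case one obtains an equation $\alpha^4 = \nu^{p(1-\tau)} \zeta$ for some $\nu \in K_1^\times$ and some root of unity $\zeta \in \mu_{K_1}$; that the residual unit is indeed a root of unity uses that $K_1/k_1$ is a CM extension, so antiunit units are roots of unity.

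The argument now closes. Since $\alpha^4 \equiv \omega(\mathfrak{a})^8 \pmod{(K_1^\times)^{p^r}}$ and $\nu^{p(1-\tau)} \in (K_1^\times)^p$, reducing modulo $(K_1^\times)^p$ gives $\omega(\mathfrak{a})^8 \equiv \zeta \pmod{(K_1^\times)^p}$. Because $\gcd(8, p) = 1$, Bezout's identity then forces $\omega(\mathfrak{a}) \in \mu_{K_1} \cdot (K_1^\times)^p$. But $\mathfrak{a}$ having order exactly $p^t$ in $\mathfrak{C}_1$ implies $\omega(\mathfrak{a}) \notin \mu_{K_1}(K_1^\times)^p$: otherwise $\mathfrak{a}^{p^{t-1}(1-\tau)}$, which represents the same class as $\mathfrak{a}^{2p^{t-1}}$ in $\mathfrak{C}_{1,p}$, would be principal in $K_1$, contradicting its order $p$ in $\mathfrak{C}_1$. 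This contradiction completes the proof. The main obstacle I anticipate is Case (b) — the careful invocation of Lemma 3.5 of \cite{bS2012b} to extract a principal lift to $K_1$ of an ideal of $K_0$ whose class has order $p$ in $\mathfrak{C}_0$, together with the verification via the CM-field structure of $K_1/k_1$ that the residual unit is indeed a root of unity.
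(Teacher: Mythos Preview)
Your argument is correct, but it is considerably more elaborate than the paper's. You descend $\omega(\mathfrak{a})$ to an element $u \in K_0^\times$ (by adjusting $\delta$ so that $N_{K_1/K_0}\delta = 1$ and applying Hilbert 90), pass to the anti-unit $\alpha = u^{1-\tau}$, and then essentially replay the entire Claim from the proof of Theorem~\ref{T:main}: an ideal decomposition $(\alpha) = \mathfrak{b}^{p^r}\mathfrak{c}$, the case split on the order of $\mathfrak{b}^{p^{r-1}(1-\tau)}$ in $\mathfrak{C}_0$, and the capitulation input from Lemma~3.5 of \cite{bS2012b}. A few of your intermediate assertions are imprecise (for instance $\alpha \equiv \omega(\mathfrak{a})^2$ only modulo $\mu_{K_1}(K_1^\times)^{p^r}$, not $(K_1^\times)^{p^r}$; and the claim that $\alpha$ and $u$ generate the same subgroup of $K_0^\times/(K_0^\times)^{p^r}$ is unjustified and in fact unnecessary), but these do not affect the final contradiction $\omega(\mathfrak{a}) \in \mu_{K_1}(K_1^\times)^p$.

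The paper instead argues in three lines by pushing \emph{down} rather than building Kummer theory on top. From $\omega(\mathfrak{a})^{\sigma-1} = \delta^{p^t}$ with $N_{K_1/K_0}(\delta)^{p^{r-1}} = 1$, Hilbert 90 gives $\delta^{p^{r-1}} = \gamma^{\sigma-1}$, whence $\omega(\mathfrak{a}) = \gamma^{p^{t-r+1}}\eta$ with $\eta \in K_0^\times$. Taking norms, $N_{K_1/K_0}(\omega(\mathfrak{a})) = N_{K_1/K_0}(\gamma)^{p^{t-r+1}}\eta^p$ is a $p$th power in $K_0$ (here $t \geq r$ is used). But $(N_{K_1/K_0}\omega(\mathfrak{a})) = N_{K_1/K_0}\mathfrak{a}^{p^t(1-\tau)}$, and since the norm $\mathfrak{C}_{1,p} \to \mathfrak{C}_{0,p}$ is an isomorphism for TK extensions, $N_{K_1/K_0}\mathfrak{a}^{1-\tau}$ has order $p^t$ in $\mathfrak{C}_0$, so this ideal cannot be the $p$th power of a principal ideal. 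The paper's route thus exploits TK condition (iii) directly for the contradiction and avoids the Claim machinery, the field $L$, and Lemma~3.5 altogether; your route reuses the heavier apparatus already developed for Theorem~\ref{T:main}.
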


\begin{proof}
Assume, to the contrary, that $\left( \sigma, \omega(\mathfrak{a}) \right)_{\Kum} = \left( \sigma, \omega(\mathfrak{a}) \right)_{\N}^{p^{r-1}} = 1$.  Then we have $\omega(\mathfrak{a})^{\sigma-1} = \delta^{p^{t}}$ with $N_{K_1/K_0} \delta^{p^{r-1}} = 1$.  By Hilbert's Theorem 90, there exists $\gamma$ in $K_1^{\times}$ such that $\delta^{p^{r-1}} = \gamma^{\sigma-1}$.  Then $\omega(\mathfrak{a}) = \gamma^{p^{t-r+1}} \eta$ for some $\eta$ in $K_0^{\times}$.  Thus,
\begin{equation*}
	N_{K_1/K_0} (\omega(\mathfrak{a})) = N_{K_1/K_0} \left( \gamma^{p^{t-r+1}} \right) \eta^p, 
\end{equation*}
so $N_{K_1/K_0} (\omega(\mathfrak{a}))$ is a $p$th power in $K_0$.  But Proposition 2.2 in \cite{bS2012b} and the assumption that $K_1/k_0$ is a TK extension show that the norm map $N \colon \mathfrak{C}_{1} \rightarrow \mathfrak{C}_0$ is an isomorphism, thus $N_{K_1/K_0} \mathfrak{a}^{1-\tau}$ represents a class of order $p^t$ in $\mathfrak{C}_0$. We obtain a contradiction by observing then that $\left(N_{K_1/K_0} (\omega(\mathfrak{a})) \right) = N_{K_1/K_0} \mathfrak{a}^{p^t(1-\tau)}$ cannot be the $p$th power of a principal ideal.   
\end{proof}

The analogue of Theorem \ref{T:main} with $\varepsilon_{\mathfrak{a}, K_1/k_0, S^{\mathrm{min}}}$ replacing $\varepsilon_{\mathfrak{a}, K_1/k_1, S_{k_1}^{\mathrm{ns}}}$ can be proved without recourse to Theorem \ref{T:bsharprank1}:

\begin{theorem}\label{T:maintheoremalternate}
Let $K_1/k_0$ be a TK extension for which the $p$-local Brumer-Stark Conjecture holds.  Set $\left| \mu_{K_1} \otimes \mathbb{Z}_p \right| = p^r$. The following are equivalent:
\begin{enumerate}
	\item $w_{1} \theta_{K_1/k_0, S^{\mathrm{min}}} (0)$ is in $p^r \mathbb{Z}[G]$
	\setcounter{enumi}{2}
	\item As $\mathfrak{a}$ runs through classes in $\left( \Cl_{K_1} \otimes \mathbb{Z}_p \right)^{-}$, the elements $\varepsilon_{\mathfrak{a}, K_1/k_0, S^{\mathrm{min}}}$ can be chosen so that their $p^r$th roots generate a cyclic extension of $k_0$ of degree $2p^{r+1}$, unramified away from $p$ and linearly disjoint over $k_0$ from the cyclotomic $\mathbb{Z}_p$-extension.
\end{enumerate} 
\end{theorem}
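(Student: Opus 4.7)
The plan is to adapt the proof of Theorem 1.3 but replace the cohomological input of Theorem \ref{T:bsharprank1} with direct analysis of $\varepsilon_{\mathfrak{a}, K_1/k_0, S^{\mathrm{min}}}$ via the explicit formula in Equation \eqref{E:BSgenerator} and the reformulation of the $p$-local Brumer-Stark conjecture in Theorem \ref{T:recharacterization}.

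For (i) $\Rightarrow$ (iii), I would begin by noting that the necessary condition $t = r$ follows from a $p$-valuation comparison: Propositions \ref{P:theta0divisibility} and \ref{P:theta1divisibility} show the coefficients of $w_1 \theta_0$ have $p$-valuation $t-1$ and those of $w_1 \theta_1$ have $p$-valuation $r-1$, and the sum can reach $p$-valuation $\geq r$ only when $t = r$ with the two contributions canceling modulo $p^r$. Next, choose $\mathfrak{a}$ coprime to $p$ representing a generator of the cyclic group $(\Cl_{K_1} \otimes \mathbb{Z}_p)^-$ of order $p^r$ (Proposition \ref{P:trivialproperties}). Writing $w_1\theta = p^r \beta$, the Brumer-Stark generator $\varepsilon = \varepsilon_{\mathfrak{a}, K_1/k_0, S^{\mathrm{min}}}$ satisfies $(\varepsilon) = (\mathfrak{a}^\beta)^{p^r}$ and is a $p^r$-Abelian anti-unit, so $L = K_1(\sqrt[p^r]{\varepsilon})/k_0$ is Abelian and unramified outside $p$ (the valuation $v_{\mathfrak{q}}(\varepsilon)$ lies in $p^r\mathbb{Z}$ for every $\mathfrak{q}\nmid p$). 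A direct argument gives $[L:K_1] = p^r$: if $\varepsilon \in K_1^{\times p}\mu_{K_1}$, the ideal relation $(\delta)^p = (\mathfrak{a}^\beta)^{p^r}$ forces $\mathfrak{a}^\beta$ to have order dividing $p^{r-1}$ in the class group, contradicting the choice of $\mathfrak{a}$.

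The cyclicity of $\Gal(L/K_0)$ and disjointness of $L$ from the cyclotomic $\mathbb{Z}_p$-extension would be handled by the analogue of the Claim in the proof of Theorem \ref{T:main}: assume for contradiction that $\Gal(L/K_0) \cong \mathbb{Z}/p^r\mathbb{Z} \times \mathbb{Z}/p\mathbb{Z}$ or that $L$ meets the cyclotomic tower nontrivially. Each case produces an intermediate Abelian field $F$ with $K_0 \subset F \subset L$ and $F \not\supset K_1$, hence an anti-unit $\alpha \in (K_0^\times)^{1-\tau}$ with $K_1(\sqrt[p^r]{\alpha}) = L$; comparing the ideal $(\alpha) = \mathfrak{b}^{p^r}\mathfrak{c}$ with the support outside $p$, and using that no prime of $k_0$ splits in $K_0/k_0$ while ramifying in $k_1/k_0$, yields a relation expressing some power of the class of $\mathfrak{a}^\beta$ as a $p$-th power in $\mathfrak{C}_{0,p}$, contradicting the order $p^r$ of $\mathfrak{C}_{0,p}$. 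The factor $\sqrt[p]{\varepsilon(\mathfrak{a})}^{\,q}$ appearing in Equation \eqref{E:BSgenerator} is accommodated by Proposition \ref{P:quadcondition}, which ensures $\sqrt[p]{\varepsilon(\mathfrak{a})}$ is itself an anti-unit in $K_1$ and so contributes no new ramification or cyclotomic component.

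For (iii) $\Rightarrow$ (i), I invoke Proposition \ref{P:samedenominators} to reduce to showing a single coefficient of $w_1\theta$ lies in $p^r\mathbb{Z}$: given the cyclic extension $L/k_0$ of the prescribed form, the ideal $(\varepsilon_{\mathfrak{a}}) = \mathfrak{a}^{w_1\theta}$ is a $p^r$-th power of an ideal away from $p$, and choosing $\mathfrak{a}$ of maximal order $p^r$ in $(\Cl_{K_1} \otimes \mathbb{Z}_p)^-$ forces the coefficients of $w_1\theta$ at primes coprime to $p$ in $\mathfrak{a}$'s support to be divisible by $p^r$. The main obstacle is the cyclicity-and-disjointness claim in (i) $\Rightarrow$ (iii): the case analysis in the contrapositive requires careful bookkeeping in $\mathfrak{C}_{0,p}$, and working directly with $\varepsilon_{\mathfrak{a}, K_1/k_0, S^{\mathrm{min}}}$---whose explicit expression mixes $\gamma_\sigma(\mathfrak{a})$ and $\sqrt[p]{\varepsilon(\mathfrak{a})}^{\,q}$---makes the tracking markedly more intricate than the analogous step for $\omega(\mathfrak{a})$ in the proof of Theorem \ref{T:main}, even though the net conclusion is the same.
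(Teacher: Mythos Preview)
Your outline for (i) $\Rightarrow$ (iii) is close to the paper's, but the step ``$\mathfrak{a}^\beta$ has order dividing $p^{r-1}$, contradicting the choice of $\mathfrak{a}$'' is not justified: you chose $\mathfrak{a}$ to generate $(\Cl_{K_1}\otimes\mathbb{Z}_p)^-$, but you never check that $\beta = w_1\theta/p^r$ acts on that cyclic group as a $p$-adic unit. It does, because $H$ acts trivially and $\tau$ by $-1$, so $\beta$ acts by $\chi^p(\beta) = \tfrac{w_1}{p^r}L_{K_0/k_0,S^{\mathrm{min}}}(0,\psi)$, which by Equation~\eqref{E:quadL} and $t=r$ is prime to $p$; but this computation has to be made. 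The paper sidesteps this by norming $\varepsilon_{\mathfrak{a}}$ down to $K_0$ and using the explicit shape of $\theta_0$ together with the fact that $N_{K_1/K_0}\mathfrak{a}$ generates $\mathfrak{C}_{0,p}$ --- the same input, packaged differently.

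The serious gap is in (iii) $\Rightarrow$ (i). Knowing $(\varepsilon_{\mathfrak{a}}) = \mathfrak{a}^{w_1\theta}$ is a $p^r$th power away from $p$ tells you, for each prime $\mathfrak{Q}$ in the support of $\mathfrak{a}$, only that $\sum_{g\in D_{\mathfrak{Q}}} a_g \equiv 0 \pmod{p^r}$, where $w_1\theta = \sum a_g g^{-1}$ and $D_{\mathfrak{Q}}$ is the decomposition group of $\mathfrak{Q}$ in $G$. This isolates the identity coefficient only when $D_{\mathfrak{Q}}$ is trivial, and nothing in your choice of $\mathfrak{a}$ guarantees that. (You also write ``maximal order $p^r$'', but in this direction the order is $p^t$ and $t=r$ has not yet been established.) The paper's argument fills exactly this hole: it uses Chebotarev in the class field $H_1/K_0$ to produce a prime $\mathfrak{q}$ of $K_0$, not dividing $p$, that splits completely in $K_1/K_0$ and whose divisor $\mathfrak{Q}$ in $K_1$ has class of order $p^t$ in $\mathfrak{C}_{1,p}$. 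The complete splitting in $K_1/K_0$ forces $D_{\mathfrak{Q}}\subset\{1,\tau\}$, and since $\mathfrak{Q}$ has nontrivial class in $\mathfrak{C}_1$ it cannot be $\tau$-invariant, so $D_{\mathfrak{Q}}=\{1\}$ and $v_{\mathfrak{Q}}(\varepsilon_{\mathfrak{Q}})$ is exactly the identity coefficient. Proposition~\ref{P:samedenominators} then finishes. Without this Chebotarev step your argument does not go through.
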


\begin{proof}
(i) $\rightarrow$ (iii):  As we assumed that the $p$-primary Brumer-Stark conjecture for $K_1/k_0$, the extensions generated by the $p^r$th roots of the elements $\varepsilon_{\mathfrak{a}, K_1/k_0, S^{\mathrm{min}}}$ are Abelian over $k_0$.  Property (i) implies that they are unramified away from $p$. 

 Let $\mathfrak{a}$ represent a class of order $p^t$ in $\mathfrak{C}_1$.  We fill first see that $\varepsilon_{\mathfrak{a}, K_1/k_0, S^{\mathrm{min}}}$ is not in $K_1^{\times p} \mu_{K_1}$. Assume, to the contrary, that $\varepsilon_{\mathfrak{a}, K_1/k_0, S^{\mathrm{min}}}$ can be chosen in $K_1^{\times p}$.  Then\\ $N_{K_1/K_0} \left( \varepsilon_{\mathfrak{a}, K_1/k_0, S^{\mathrm{min}}} \right)$ is in  $K_0^{\times p}$.  Letting  $\varepsilon(\mathfrak{a})$ denote the element of $K_0$ associated with the ideal $N_{K_1/K_0} \mathfrak{a}$ by the Brumer-Stark conjecture for $K_0/k_0$, we find using Proposition \ref{P:theta0divisibility} and Lemma \ref{L:theta1properties} the equality of ideals in $K_0$
 \begin{equation*}
 	\left( N_{K_1/K_0} \left( \varepsilon_{\mathfrak{a}, K_1/k_0, S^{\mathrm{min}}} \right) \right) = \mathfrak{a}^{N_H w_1 \theta_0} 
	= N_{K_1/K_0} \left(\mathfrak{a} \right)^{q 2^{\left|S^{\mathrm{min}}\right|-2} \left| \mathfrak{C}_0 \right| (1-\tau)} \mathcal{O}_{K_1}.
\end{equation*}
Thus, in $K_0$ we have
\begin{equation*}
	\left( N_{K_1/K_0} \left( \varepsilon_{\mathfrak{a}, K_1/k_0, S^{\mathrm{min}}} \right) \right) 
	= N_{K_1/K_0} \left(\mathfrak{a} \right)^{q 2^{\left|S^{\mathrm{min}}\right|-2} \left| \mathfrak{C}_0 \right| (1-\tau)}.
\end{equation*}
Since $\mu_{K_1} \otimes \mathbb{Z}_p$ is not cohomologically trivial, $q$ is relatively prime to $p$. The surjectivity of the norm map $N \colon \mathfrak{C}_{1} \rightarrow \mathfrak{C}_0$ then shows the ideal on the right is not the $p$th power of a principal ideal, a contradiction.

We have now shown that $\varepsilon_{\mathfrak{a}, K_1/k_0, S^{\mathrm{min}}}$ can be chosen so that its $p^r$th roots generate an extension field $L$ of degree $p^r$ over $K_1$, Abelian over $k_0$,  disjoint over $K_1$ from the $p$-cyclotomic $\mathbb{Z}_p$-extension, with $L/K_1$ unramified away from $p$.  Property (i) of Proposition \ref{P:trivialproperties} shows that $L/k_0$ is unramified away from $p$.  

It remains to show 1) for every ideal $\mathfrak{b}$ representing a class in $\left( \Cl_{K_1} \otimes \mathbb{Z}_p \right)^-$, the element $\varepsilon_{\mathfrak{a}, K_1/k_0, S^{\mathrm{min}}}$ can be chosen so that its $p^r$th roots are in $L$, and 2) $L/K_0$ is cyclic.  The proof can be accomplished by mimicking the arguments of the claim and the paragraph preceding it in the proof of (i) $\rightarrow$ (ii) of Theorem \ref{T:main}.
\

(iii) $\rightarrow$ (i):  Let $H_0$ and $H_1$ be the unramified Abelian extensions of $K_0$ and $K_1$ corresponding to the groups $\mathfrak{C}_{0,p}$ and $\mathfrak{C}_{1,p}$ through class field theory.  The proof of Proposition 2.2 in \cite{bS2012b} shows that $H_1 = H_0 K_0$. Because $\mathfrak{K}_p$ is trivial, the proof also shows that the restriction map $\Gal (H_1/K_1) \rightarrow \Gal(H_0/K_0)$ is an isomorphism.  Let $g$ be an element of $\Gal (H_1/K_1)$ of order $p^t$.  Use Chebotarev's Density Theorem to choose a prime ideal $\mathfrak{q}$ in $K_0$ not dividing $p$ whose image under the Artin map for $H_1/K_0$ is $g$.  Because $g$ fixes $K_1$, $\mathfrak{q}$ splits completely in $K_1/K_0$. 

 Let $\mathfrak{Q}$ be a prime ideal of $K_1$ dividing $\mathfrak{q}$ and $\sigma_{\mathfrak{Q}}$ be the Frobenius automorphism of $\mathfrak{Q}$ in $\Gal (H_1/K_1)$.  The restriction of $\sigma_{\mathfrak{Q}}$ to $H_0$ is the Frobenius automorphism of $\mathfrak{q}$ in $\Gal (H_0/K_0)$.  This is also the restriction of the Frobenius automorphism of $\mathfrak{q}$ in $\Gal(H_1/K_0)$, i.e. $g$, to $H_0$.  Since $\sigma_{\mathfrak{Q}}$ and $g$ both fix $K_1$ and have the same restriction to $H_0$, they are identical.  Thus, $\mathfrak{Q}$ represents a class of order $p^t$ in $\mathfrak{C}_{1,p}$.
 
Because $\mathfrak{q}$ splits completely in $K_1/K_0$, the power of $\mathfrak{Q}$ dividing $\varepsilon_{\mathfrak{Q}, K_1/k_0, S^{\mathrm{min}}}$ is the coefficient of the identity map in $w_1 \theta_{K_1/k_0, S^{\mathrm{min}}} (0)$.  As $\mathfrak{Q}$ is unramified in the extension of $K_1$ generated by the $p^r$th root of $\varepsilon_{\mathfrak{Q}, K_1/k_0, S^{\mathrm{min}}}$ (recall that $\mathfrak{Q}$ does not divide $p$), $p^r$ must divide the coefficient of the identity in $w_1 \theta_{K_1/k_0, S^{\mathrm{min}}} (0)$.  Property (i) then follows from Proposition \ref{P:samedenominators}.
\end{proof}

We can strengthen the above theorems if we assume $K_1/k_0$ to be a TKNS extension.  First, we will use this assumption to bound the size of the maximal Abelian exponent $p^{r+1}$ extensions of $k_0$ and $k_1$ unramified away from $p$.

In the following lemma, for a finitely generated Abelian group $A$ with an action by complex conjugation, we let $\rk^+ (A)$ and $\rk^- (A)$ denote the $p$-ranks of $\left( A \otimes \mathbb{Z}_p \right)^+$ and  $\left( A \otimes \mathbb{Z}_p \right)^-$.
\begin{lemma}\label{L:reflection}
Let $K$ be a CM number field, and let $K^+$ be its maximal totally real subfield.  Assume that $K$ contains the $p$th roots of unity, and that no prime ideal of $K^+$ dividing $p$ splits in $K/K^+$.  Let $L/K^+$ be the maximal Abelian exponent $p$ extension of $K^+$ which is unramified away from $p$.  Then 
\begin{equation*}
	\left[ L : K^+ \right] = 
	p^{\rk^{-} \left( \Cl_K \right) + 1},
\end{equation*}
\end{lemma}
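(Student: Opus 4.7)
The plan is to translate the question into a Kummer-theoretic radical computation on $K$ and then apply the Dirichlet unit theorem together with a standard Selmer-group exact sequence to count dimensions over $\mathbb{F}_p$. First I would set $M = LK$. Since $\left[ L \colon K^+ \right]$ is a power of the odd prime $p$ while $\left[ K \colon K^+ \right] = 2$, the extensions $L/K^+$ and $K/K^+$ are linearly disjoint, so $\left[ M \colon K \right] = \left[ L \colon K^+ \right]$ and $\Gal(M/K^+) = \Gal(L/K^+) \times \Gal(K/K^+)$. A local argument then shows that $M/K$ is Abelian of exponent $p$ and unramified outside $S_p$, the set of primes of $K$ above $p$: for $\mathfrak{Q}$ a prime of $K$ with $\mathfrak{Q} \nmid p$, the inertia in $\Gal(M/K^+)$ projects to the trivial inertia in $\Gal(L/K^+)$, so sits inside $\Gal(K/K^+)$ and meets $\Gal(M/K)$ trivially.

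Because $\mu_p \subset K$, Kummer theory writes $M = K \left( W^{1/p} \right)$ for a unique $W \subset K^\times / K^{\times p}$, with $\left| W \right| = \left[ M \colon K \right]$.  The $\tau$-equivariance of the Kummer pairing $\Gal(M/K) \cong \Hom(W, \mu_p)$, combined with $\tau \zeta_p = \zeta_p^{-1}$, translates the condition that $M/K^+$ is Abelian into the condition that $W$ lies in the $\tau$-minus eigenspace of $K^\times / K^{\times p}$; the ramification criterion for radical extensions translates the unramified condition into $W$ being contained in the Selmer-type group
\begin{equation*}
	V := \left\{ \, \alpha \in K^\times \mid v_\mathfrak{q}(\alpha) \equiv 0 \pmod{p} \text{ for all } \mathfrak{q} \notin S_p \, \right\} / K^{\times p}.
\end{equation*}
Maximality of $L$ forces $W = V^-$, so $\left[ L \colon K^+ \right] = p^{\dim_{\mathbb{F}_p} V^-}$.

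Next I would compute $\dim_{\mathbb{F}_p} V^-$ using the short exact sequence
\begin{equation*}
	0 \to \mathcal{O}_{K, S_p}^\times / \left( \mathcal{O}_{K, S_p}^\times \right)^p \to V \to \Cl_{K, S_p}[p] \to 0
\end{equation*}
induced by $\alpha \mapsto \bigl[ \prod_{\mathfrak{q} \notin S_p} \mathfrak{q}^{v_\mathfrak{q}(\alpha)/p} \bigr]$.  Since $p$ is odd, taking $\tau$-minus parts is exact.  The no-splitting hypothesis at $p$ forces each prime in $S_p$ to be $\tau$-fixed, so the classes of these primes lie in $\Cl_K^+$, and the quotient $\Cl_K \to \Cl_{K, S_p}$ becomes an isomorphism on minus parts, giving $\dim \Cl_{K, S_p}[p]^- = \rk^{-}(\Cl_K)$.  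For the units, Dirichlet gives $\mathcal{O}_{K, S_p}^\times$ free rank $\left[ K^+ \colon \mathbb{Q} \right] - 1 + |S_p|$ and $\mathcal{O}_{K^+, S_p^+}^\times$ free rank $\left[ K^+ \colon \mathbb{Q} \right] - 1 + |S_p^+|$; the no-splitting hypothesis gives $|S_p| = |S_p^+|$, so $\left( \mathcal{O}_{K, S_p}^\times \otimes \mathbb{Q} \right)^-$ vanishes.  The only minus contribution modulo $p$ therefore comes from $\mu_K[p]^- = \mu_p$, which is one-dimensional, so $\dim \left( \mathcal{O}_{K, S_p}^\times / p \right)^- = 1$.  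Assembling these yields $\dim V^- = 1 + \rk^{-}(\Cl_K)$, hence $\left[ L \colon K^+ \right] = p^{\rk^{-}(\Cl_K) + 1}$.

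The main technical hurdle is the bookkeeping of $\tau$-eigenspace dimensions: the no-splitting hypothesis at $p$ is precisely what forces the $S_p$-units to contribute nothing minus beyond the $p$th roots of unity, producing the ``$+1$'' in the exponent.  Without that hypothesis one would have to account for additional minus contributions from $S_p$ in both the unit group and the $S_p$-class group, and the clean reflection-type formula would break down.
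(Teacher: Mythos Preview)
Your argument is correct and complete in outline; the only step you leave implicit is the reverse inclusion $V^- \subset W$, but this follows by the standard observation that for $\alpha \in V^-$ the extension $K(\alpha^{1/p})/K^+$ is Abelian with inertia at primes away from $p$ contained in $\Gal(K/K^+)$, so its degree-$p$ subfield over $K^+$ lies in $L$.

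Your route, however, is genuinely different from the paper's. The paper invokes Gras's $S$--$T$ reflection theorem with $T$ the set of primes above $p$ and $S=\emptyset$, reading off $\rk^+(\Cl_T) - \rk^-(\Cl^T) = 1$ directly from the formula for $\rho_\chi(T,S)$ under the no-splitting hypothesis, and then identifying $\rk^+(\Cl_T)$ with $\log_p[L:K^+]$ via class field theory for $K^+$. You instead unpack this reflection principle by hand: passing to the Kummer radical over $K$, setting up the Selmer exact sequence, and counting minus-eigenspace dimensions using Dirichlet's theorem. What the paper buys is brevity and a clear pointer to the general mechanism; what your approach buys is self-containment (no need for the black-box reflection theorem) and an explicit display of exactly where the no-splitting hypothesis enters---once to kill the minus part of the $S_p$-unit lattice and once to identify $\Cl_{K,S_p}^-$ with $\Cl_K^-$. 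The two arguments are really the same computation viewed from the class-field-theoretic and Kummer-theoretic sides of the mirror.
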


\begin{proof}
Let $T$ be the set of prime ideals of $K$ dividing $p$, and let $S$ be the empty set.  Let $\chi$ be the nontrivial character of $\Gal (K/K^+)$. The $S$-$T$ reflection theorem (see \cite[Theorem 5.4.5]{gG2003}) in this context states that
\begin{equation}\label{E:reflection}
	\rk^{+} \left( \Cl_{T}^{S} \right) - \rk^{-} \left( \Cl_{S}^{T} \right) = \rho_{\chi} (T,S).
\end{equation}
Here, $\Cl_{T}^{S} = \Cl_{T}$ is the projective limit of generalized class group corresponding through class field theory to the maximal Abelian $T$-ramified and $S$-split pro-$p$ extension of $K$, and similarly, $\Cl_{S}^{T} = \Cl^{T}$ is the generalized class group corresponding to the maximal Abelian $S$-ramified $T$-split pro-$p$ extension of $K$.  Finally, $\rho_{\chi} (T,S)$ in this setting is given by
\begin{equation*}
	\rho_{\chi} (T, S) = 1 +   \sum_{\substack{v \in T^+\\
									v \text{ split in } K}} 1,
\end{equation*}
where $T^+$ is the set of prime ideals of $K^+$ dividing $p$.  By assumption, this sum is empty, so $\rho_{\chi} (T, S) = 1$.

As no prime in $T^+$ splits in $K/K^+$, the prime ideals in $T$ generate an exponent $2$ subgroup of $\Cl^{-}$.    Therefore, $\rk^{-} \left( \Cl_{S}^{T} \right) = \rk^{-} \left( \Cl \right)$. Equation \eqref{E:reflection} thus becomes
\begin{equation*}
	\rk^{+} \left( \Cl_{T} \right) = \rk^{-} \left( \Cl \right) + 1.
\end{equation*}
The lemma now follows using the definition of $L$.
\end{proof}

\begin{corollary}\label{C:nosplitconditions}
Let $K_1/k_0$ be a TKNS extension for which the p-local Brumer-Stark conjecture holds.  Set $\left| \mu_{K_1} \otimes \mathbb{Z}_p \right| = p^r$. Conditions (i), (ii), and (iii) of Theorems \ref{T:main} and \ref{T:maintheoremalternate}  are equivalent to:

\begin{enumerate}
	\setcounter{enumi}{3}
	\item There exists a cyclic extension $L/k_0$ of degree $p^{r+1}$, unramified away 
	from $p$ and linearly disjoint over $k_0$ from the cyclotomic $\mathbb{Z}_p$-extension.
\end{enumerate}
In addition, when $(iv)$ holds, $L \left( \zeta_{p^{2r+1}} \right)^+$ is the maximal Abelian exponent $p^{r+1}$ extension of $k_0$ unramified away from $p$.
\end{corollary}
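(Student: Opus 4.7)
The strategy is to prove the equivalence (iv) $\Leftrightarrow$ (iii); combined with Theorems \ref{T:main} and \ref{T:maintheoremalternate} this yields equivalence of (iv) with (i) and (ii) as well.

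The forward implication (iii) $\Rightarrow$ (iv) is immediate. The cyclic degree $2p^{r+1}$ extension of $k_0$ furnished by (iii) has, since $\gcd(2, p^{r+1}) = 1$, a unique subfield of degree $p^{r+1}$ over $k_0$; this subfield is cyclic, unramified away from $p$, and (by inheritance) linearly disjoint from the cyclotomic $\mathbb{Z}_p$-extension.

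For (iv) $\Rightarrow$ (iii), let $L$ be as in (iv). Since $L/k_0$ has odd degree and $k_0$ is totally real, $L$ is totally real, so $L \cap K_0 \subseteq K_0^+ = k_0$, and $LK_0/k_0$ is cyclic of degree $2p^{r+1}$, unramified away from $p$, and linearly disjoint from the cyclotomic tower. The critical step is to arrange $k_1 \subseteq L$, whence $K_1 \subseteq LK_0$ and $LK_0/K_1$ is cyclic of degree $p^r$. For this, apply Lemma \ref{L:reflection} to $K = K_0 = k_0(\zeta_{p^r})$: the TKNS hypothesis supplies the non-split condition for primes above $p$, and Proposition \ref{P:trivialproperties}(ii) gives $\rk^-(\Cl_{K_0}) = 1$, so the maximal Abelian exponent-$p$ extension of $k_0$ unramified away from $p$ has degree $p^2$. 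Analyzing the pro-$p$ abelianization $\Gal(M/k_0)$ (for $M$ the maximal Abelian pro-$p$ extension of $k_0$ unramified away from $p$) via this $\mathbb{F}_p$-rank-$2$ bound, one sees that if (iv) produces one cyclic quotient of $\Gal(M/k_0)$ of order $p^{r+1}$ orthogonal to the cyclotomic direction, then a Galois-theoretic modification produces another whose degree-$p$ reduction coincides with the line cut out by $k_1$; thus we may replace $L$ by an extension containing $k_1$. Kummer theory over $K_1 \supseteq \mu_{p^r}$ then yields $LK_0 = K_1\bigl(\sqrt[p^r]{\omega}\bigr)$ for an anti-unit $\omega$; adjusting $\omega$ by a $p^r$-th power, we may take $\omega = \omega(\mathfrak{a})$ from Lemma \ref{L:modifiedradical} for a suitable $\mathfrak{a}$. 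The explicit formula for the Brumer-Stark generator derived in the proof of Theorem \ref{T:maintheoremalternate} identifies $\omega(\mathfrak{a})$ with $\varepsilon_{\mathfrak{a}, K_1/k_0, S^{\mathrm{min}}}$ up to a $p^r$-th power and a root of unity, establishing (iii).

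For the final assertion, observe that $k_0(\zeta_{p^{2r+1}})^+$ is the $(r+1)$-th layer of the cyclotomic $\mathbb{Z}_p$-extension of $k_0$ (using that $k_0 \supseteq \mathbb{Q}(\zeta_{p^r})^+$, a consequence of $[K_0 : k_0] = 2$), of degree $p^{r+1}$ over $k_0$. By linear disjointness from $L$, the compositum $L(\zeta_{p^{2r+1}})^+$ has degree $p^{2r+2}$ over $k_0$, is Abelian of exponent dividing $p^{r+1}$, and is unramified away from $p$. A higher-exponent reflection argument, for example an étale reflection theorem (\cite[Ch.~V]{gG2003}) applied to $K_0$ with $\mathbb{Z}/p^{r+1}\mathbb{Z}$-coefficients, bounds the degree of the maximal such extension by $p^{2r+2}$, giving equality. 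The main obstacle throughout is arranging $k_1 \subseteq L$ in the reverse direction: Lemma \ref{L:reflection} alone permits $p$ candidate non-cyclotomic degree-$p$ subfields of $L_0$, so isolating the one corresponding to $k_1$ (or showing we may twist $L$ to contain it) demands Galois-cohomological input beyond the exponent-$p$ regime handled directly by that lemma.
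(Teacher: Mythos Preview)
Your forward implication (iii) $\Rightarrow$ (iv) is fine, and you correctly locate the main difficulty in the reverse direction: producing from the abstract extension $L$ one that actually contains $K_1$. But the argument for (iv) $\Rightarrow$ (iii) has a genuine gap at its last step, and a secondary one earlier.

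The serious problem is the final sentence: you assert that ``the explicit formula for the Brumer--Stark generator derived in the proof of Theorem~\ref{T:maintheoremalternate} identifies $\omega(\mathfrak{a})$ with $\varepsilon_{\mathfrak{a}, K_1/k_0, S^{\mathrm{min}}}$ up to a $p^r$-th power and a root of unity.'' No such identification is available without already knowing condition (i). By definition $(\varepsilon_{\mathfrak{a}, K_1/k_0, S^{\mathrm{min}}}) = \mathfrak{a}^{w_1 \theta}$, whereas $(\omega(\mathfrak{a})) = \mathfrak{a}^{p^t(1-\tau)}$; comparing these two exponents modulo $p^r$-th powers amounts precisely to the $p$-integrality of the coefficients of $\theta$, which is (i). So this step is circular. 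The paper does \emph{not} aim at (iii) directly: it proves (iv) $\Rightarrow$ (i). After building a field $F$ with $K_1 \subset F \subset L(\zeta_{p^{2r+1}})$ and $F/k_0$ cyclic of degree $2p^{r+1}$, it arranges $F = K_1\bigl(\sqrt[p^r]{\omega(\mathfrak{b})}\bigr)$, reads off $(\sigma,\omega(\mathfrak{b}))_{\Kum}=1$ from the abelianness of $F/K_0$, and then invokes the cohomological criterion Theorem~\ref{T:bsharprank1} (together with Lemma~\ref{L:trivialnormsymbol}) to force $t=r$ and $b_0+b_1 \in \mathbb{Z}_p$, i.e.\ (i). Condition (iii) then comes for free from Theorem~\ref{T:maintheoremalternate}. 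Theorem~\ref{T:bsharprank1} is the bridge your argument is missing.

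A smaller point: the ``Galois-theoretic modification'' replacing $L$ by an extension containing $k_1$ is plausible but left unargued; in any case the paper never does this. It works inside the compositum $L(\zeta_{p^{2r+1}})$, whose Galois group over $K_0$ is $(\mathbb{Z}/p^{r+1}\mathbb{Z})^2$, observes that $k_1$ (hence $K_1$) automatically lies in it because the maximal exponent-$p$ extension already sits below, and then extracts $F \supset K_1$ by elementary group theory in $(\mathbb{Z}/p^{r+1}\mathbb{Z})^2$.

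For the maximality assertion, your appeal to a higher-exponent reflection theorem is unnecessary. The paper uses only the exponent-$p$ Lemma~\ref{L:reflection}: since $\Gal\bigl(L(\zeta_{p^{2r+1}})/K_0\bigr)\cong(\mathbb{Z}/p^{r+1}\mathbb{Z})^2$, any strictly larger exponent-$p^{r+1}$ abelian extension of $k_0$ unramified away from $p$ would have $p$-rank at least $3$, contradicting the rank-$2$ bound at the exponent-$p$ level.
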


\begin{proof}
(ii) $\rightarrow$ (iv) is immediate.  

We will now show that (iv) $\rightarrow$ (i).  We first show that $L (\zeta_{p^{2r+1}})^+$ is the maximal Abelian exponent $p^{r+1}$ extension of $k_0$ unramified away from $p$.  Because $K_0 = k_0 (\zeta_{p^r})$, we know $L (\zeta_{p^{r}})$ is a cyclic extension of $K_0$ of degree $p^{r+1}$.  Since $L$ and the cyclotomic $\mathbb{Z}_p$-extension of $K_0$ are disjoint, the Galois group of $L (\zeta_{p^{2r+1}})$ over $K_0$ is isomorphic to $\mathbb{Z}/p^{r+1} \mathbb{Z} \times \mathbb{Z}/p^{r+1} \mathbb{Z}$.  Thus, $L(\zeta_{p^{2r+1}})^+$ is an Abelian exponent $p^{r+1}$ extension of $k_0$ unramified away from $p$. 

Let $M/K_0$ be the degree $p^2$ subextension of  $L (\zeta_{p^{2r+1}})/K_0$ for which $\Gal (M/K_0)$ is Abelian of exponent $p$;  $M$ is a CM field. By assumption, no prime ideal dividing $p$ splits in $K_0/k_0$, and Proposition \ref{P:trivialproperties} shows that $\left( \Cl_{K_0} \otimes \mathbb{Z}_p \right)^-$ is a nontrivial cyclic group. Thus, Lemma \ref{L:reflection} shows that $M^+$ is the maximal Abelian exponent $p$ extension of $k_0$ unramified away from $p$.    Then $L(\zeta_{p^{2r+1}})^+$ must be the maximal Abelian exponent $p^{r+1}$ extension of $k_0$ unramified away from $p$ since the Galois group over $k_0$ of any larger such field would have $p$-rank at least $3$, contradicting the maximality of $M^+$.

To prove (ii), we begin by observing that Proposition \ref{P:trivialproperties} shows $k_1/k_0$ is an Abelian exponent $p$ extension of $k_0$ which is unramified away from $p$.  Therefore, $k_1$ is contained in $M$, hence in $L(\zeta_{p^{2r+1}})$, and thus $K_1$ is also contained in $L(\zeta_{p^{2r+1}})$. 

The subgroup of $\Gal \left(L (\zeta_{p^{2r+1}})/K_0 \right)$ fixing $K_1$ has index $p$.  Let $H$ be a sugroup of $\mathbb{Z}/p^{r+1} \mathbb{Z} \times \mathbb{Z}/p^{r+1} \mathbb{Z}$ of index $p$.  Choose an element $h$ of $H$ not contained in $p \mathbb{Z}/p^{r+1}\mathbb{Z} \times p \mathbb{Z}/p^{r+1} \mathbb{Z}$.  Then $h$ generates a cyclic subgroup of $H$ of order $p^{r+1}$.  The quotient $\left( \mathbb{Z}/p^{r+1} \mathbb{Z} \times \mathbb{Z}/p^{r+1} \mathbb{Z} \right) / \left< h \right>$ is cyclic of order $p^{r+1}$. Thus, there is a field $F$ with 
\begin{equation*}
	k_0 \subset K_1 \subset F \subset L(\zeta_{p^{2r+1}})
\end{equation*}
for which $F/k_0$ cyclic of degree $2p^{r+1}$.  This cyclicity implies that $F$ is disjoint over $k_0$ from the cyclotomic $\mathbb{Z}_p$-extension.

Let $\gamma$ be an element of $K_1^{\times}$ such that $F = K_1 \left( \sqrt[\uproot{2} p^r]{\gamma} \right)$. We can choose $\gamma$ to be in $(K_1^{\times})^{1-\tau}$ since $F/k_1$ is Abelian.  Because $F/K_1$ is unramified away from $p$, we can write
\begin{equation*}
	\left( \gamma \right) = \mathfrak{a}^{p^r} \prod_{\mathfrak{P}_i \mid p} \mathfrak{P}_i^{e_i},
\end{equation*}
where the prime ideals $\mathfrak{P}_i$ ramify in $K_1/K_0$, hence divide $p$. As no prime ideal dividing $p$ splits in $K_1/k_1$, it follows that $\mathfrak{a}^{2 p^r}$ is trivial in $\mathfrak{C}_{1}$ and $\left( \gamma^{1-\tau} \right) = \mathfrak{a}^{p^r(1-\tau)}$.

Set $p^t = \left| \mathfrak{C}_{1,p} \right|$.  If $\mathfrak{a}^{2p^{r-1}}$ is trivial in $\mathfrak{C}_1$, then $\mathfrak{a}^{2p^{r-1} (1-\tau)} = (\eta^{1-\tau})$ for some $\eta$ in $K_1^{\times}$. Then $\gamma^{2(1-\tau)}$ differs from $\eta^{p(1-\tau)}$ by a factor of a root of unity.  But we have $F = K_1 \left( \sqrt[\uproot{2} p^r]{\gamma^4} \right) = K_1 \left( \sqrt[\uproot{2} p^r]{\gamma^{2(1-\tau)}} \right)$. We have a contradiction, since  $F$ is disjoint over $K_1$ from the cyclotomic $\mathbb{Z}_p$-extension.  Therefore, the class of $\mathfrak{a}$ in $\mathfrak{C}_1$ has order $p^r$ or $2p^r$, so $t \geq r$ and there exists an ideal $\mathfrak{b}$ representing a class in $\mathfrak{C}_{1,p}$ such that $\mathfrak{b}^{p^{t-r}}$ is in the same class as $\mathfrak{a}^2$. 

Write $\mathfrak{b}^{p^{t-r} (1-\tau)} = (\xi^{1-\tau}) \mathfrak{a}^{2(1-\tau)}$ for some $\xi$ in $K_1^{\times}$.  If $\omega(\mathfrak{b})$ is as in Lemma \ref{L:modifiedradical}, then $\left(\omega(\mathfrak{b})\right) = \left( \xi^{p^r(1-\tau)} \gamma^{2(1-\tau)} \right)$. We can thus choose $\omega(\mathfrak{b}) = \xi^{p^r(1-\tau)} \gamma^{2(1-\tau)}$, and then $K_1 \left( \sqrt[\uproot{2} p^r]{\omega(\mathfrak{b})} \right) = F$.  As $F/K_0$ is Abelian, $\left( \sigma, \omega(\mathfrak{b}) \right)_{\Kum} = 1$.  It now follows as in the proof of (ii) $\rightarrow$ (i) in the Main Theorem \ref{T:main} that $\theta$ has $p$-integral coefficients.
\end{proof}

\begin{corollary}\label{C:Leopoldt}
Let $K_1/k_0$ be a TKNS extension.  Then a sufficient condition for Leopoldt's conjecture for $k_0$ is $\left| \left( \Cl_{K_0} \otimes \mathbb{Z}_p \right)^- \right| \neq \left| \mu_{K_0} \otimes \mathbb{Z}_p \right|$.
\end{corollary}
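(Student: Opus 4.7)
The plan is to argue by contrapositive: assume Leopoldt's conjecture fails for $k_0$ and deduce $|(\Cl_{K_0} \otimes \mathbb{Z}_p)^-| = |\mu_{K_0} \otimes \mathbb{Z}_p|$. Writing $p^t = |(\Cl_{K_0} \otimes \mathbb{Z}_p)^-|$, the goal becomes showing $t = r$.

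If Leopoldt fails for $k_0$, then $k_0$ admits a $\mathbb{Z}_p$-extension different from the cyclotomic one. Its $(r+1)$-st layer is a cyclic extension $L/k_0$ of degree $p^{r+1}$, unramified away from $p$ (since every $\mathbb{Z}_p$-extension of a number field is unramified outside $p$) and linearly disjoint from the cyclotomic $\mathbb{Z}_p$-extension. This is precisely condition (iv) of Corollary \ref{C:nosplitconditions}. I would then apply that corollary --- contingent on the $p$-local Brumer-Stark conjecture, which is the standing hypothesis underlying Theorem \ref{T:main} and its corollaries --- whose implication (iv) $\Rightarrow$ (i) delivers $w_1 \theta_{K_1/k_0, S^{\mathrm{min}}}(0) \in p^r \mathbb{Z}[G]$.

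The finish is the unconditional observation at the outset of the proof of (i) $\Rightarrow$ (ii) of Theorem \ref{T:main}. Because $K_1/k_0$ is TKNS, $\mathfrak{K}_p$ is trivial, so Proposition \ref{P:theta1divisibility} forces the coefficients of $\theta_1$ to have $p$-valuation $-1$, and hence the coefficients of $w_1 \theta_1$ have $p$-valuation $r - 1$ (using $v_p(w_1) = r$). Proposition \ref{P:theta0divisibility} shows, in parallel, that the coefficients of $w_1 \theta_0$ have $p$-valuation $t - 1$. For the sum $w_1 \theta = w_1 \theta_0 + w_1 \theta_1$ to lie in $p^r \mathbb{Z}[G]$, the leading parts of the two summands must have matching $p$-valuation and cancel, which forces $t - 1 = r - 1$, i.e., $t = r$. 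This contradicts the standing hypothesis $t \neq r$, so Leopoldt's conjecture must hold for $k_0$. The one delicate point, as I see it, is the implicit reliance on the $p$-local Brumer-Stark conjecture via Corollary \ref{C:nosplitconditions}; every other link in the chain is an unconditional consequence of results already in hand.
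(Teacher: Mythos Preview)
Your argument is essentially the paper's own: argue by contrapositive, produce condition (iv) of Corollary \ref{C:nosplitconditions}, then extract $t=r$ from the $p$-valuation analysis of $\theta_0$ and $\theta_1$ (which is exactly how the ``necessary condition'' in Theorem \ref{T:main} is proved, so your final paragraph is just unpacking what the paper cites directly).

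There is one imprecision worth tightening. You assert that a $\mathbb{Z}_p$-extension of $k_0$ \emph{different} from the cyclotomic one automatically has its $(r{+}1)$-st layer linearly disjoint from the cyclotomic $\mathbb{Z}_p$-extension. That is not automatic: two distinct $\mathbb{Z}_p$-extensions can share finite layers (e.g.\ inside $\mathbb{Z}_p^2$, the summands $\langle (1,0)\rangle$ and $\langle (1,p)\rangle$ determine distinct $\mathbb{Z}_p$-quotients whose intersection is the common first layer). You can fix this in one line by noting that if Leopoldt fails then the Galois group of the compositum of all $\mathbb{Z}_p$-extensions has $\mathbb{Z}_p$-rank $\ge 2$, so one may \emph{choose} a $\mathbb{Z}_p$-extension linearly disjoint from the cyclotomic one. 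The paper sidesteps this by invoking Lemma \ref{L:reflection} to force the relevant Galois group to be exactly $\mathbb{Z}_p^2$ and then taking the $\mathbb{Z}_p$-extension containing $k_1$; since $k_1$ is not a cyclotomic layer (the TK condition forbids $K_1$ from acquiring a new $p$-power root of unity), that extension is automatically disjoint from the cyclotomic one.

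Your remark about the implicit dependence on the $p$-local Brumer--Stark conjecture is apt; the paper's proof carries the same dependence through its appeal to Theorem \ref{T:main}.
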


\begin{proof}
Property (ii) of Proposition \ref{P:trivialproperties}  shows that $\left( \Cl_{K_0} \otimes \mathbb{Z}_p \right)^-$ is a cyclic group. If Leopoldt's conjecture for $k_0$ is false, then Lemma \ref{L:reflection} shows that $k_1/k_0$ is the first layer of a $\mathbb{Z}_{p}$ extension of $k_0$.  Condition (iv) of Corollary \ref{C:nosplitconditions} holds, and then Theorem \ref{T:main} shows that $\left( \Cl_{K_0} \otimes \mathbb{Z}_p \right)^- \cong \mu_{K_0} \otimes \mathbb{Z}_p$.
\end{proof}

\section{Examples}\label{S:examples}

In this section, we provide a simple method for computationally producing examples of TKNS extensions.  The method will be discussed only for sextic extensions of real quadratic base fields, although it can be adapted to produce examples with higher degree extensions of higher degree base fields.

First, use a computer to search through real quadratic fields with discriminants in a predetermined set, identifying each field $k_0$ whose extension $K_0 = k_0 (\sqrt{-3}) $ has the following properties:
\begin{itemize}
	\item No place dividing $3$ splits in $K_0 /k_0$,
	\item $K_0$ has nontrivial cyclic $3$-primary class group.
\end{itemize} 
	
By Lemma \ref{L:reflection}, each such field $k_0$ must have degree $3$ Abelian extensions unramified away from $3$ other than the cyclotomic one.  Compute such an extension field and call it $k_1$.  Compute the class number of $K_1 = k_1 (\sqrt{-3})$.  If the class numbers of $K_0$ and $K_1$ are exactly divisible by the same power of $3$, then the norm map $N \colon \left( \Cl_{K_1} \otimes \mathbb{Z}_p \right)^- \rightarrow \left( \Cl_{K_0} \otimes \mathbb{Z}_p \right)^-$ has trivial kernel.  Then $K_1/k_0$ is a TKNS extension.

If the Brumer-Stark conjecture for $K_1/k_0$ holds, then $K_1/k_0$ will satisfy all of the hypotheses of Theorem \ref{T:main} and Corollary \ref{C:nosplitconditions}.  Although the Brumer-Stark conjecture is not generally known to hold for sextic extensions of real quadratic base fields, it was computationally verified for real quadratic base fields with small discriminant in Greither-Roblot-Tangedal \cite{GRT2004}.  

The table below was produced by implementing the above method in the software PARI/GP. Each row corresponds to a TKNS extension $K_1/k_0$ of degree $6$ with base field $k_0 = \mathbb{Q}(\sqrt{n})$.  The columns provide $n$, the cardinality $h_3$ of the cyclic $3$-primary class group of $K_0$, the minimal polynomial $p$ for a generator of the cubic extension $k_1/k_0$, a flag indicating if the module $\mathfrak{K}_p$ is trivial, the value $w_1 \theta_{K_1/k_0, S^{\mathrm{min}}}(0) = 6 \theta_{K_1/k_0, S^{\mathrm{min}}}(0)$, and a flag indicating if condition (iv) from Corollary \ref{C:nosplitconditions} holds.  Those rows where $\mathfrak{K}_p$ is trivial correspond to extensions $K_1/k_0$ satisfying all of the hypotheses of Theorem \ref{T:main} and Corollary \ref{C:nosplitconditions}.  The Brumer-Stark conjecture is known for extensions in the table by \cite{GRT2004}.

We now examine the first two extensions in the table in more detail.

Let $k_0 = \mathbb{Q}(\sqrt{29})$.  Let $k_1$ be the cubic extension of $k_0$ generated by a root of $p = x^3 - 6 x - \sqrt{29}$.  Let $K_0 = k_0 \left( \sqrt{-3} \right)$ and $K_1 = k_1 \left( \sqrt{-3} \right)$.  Computations performed with PARI/GP indicate there is a cyclic degree $9$ extension of $k_0$, unramified away from $3$ and linearly disjoint over $k_0$ from the $\mathbb{Z}_p$-extension.  According to Corollary \ref{C:nosplitconditions}, $w_1 \theta_{K_1/k_0, S^{\mathrm{min}}} (0)$ must be in $3 \mathbb{Z}[G]$.  This is confirmed by PARI/GP, which shows that
\begin{equation*}
	w_1 \theta_{K_1/k_0, S^{\mathrm{min}}} (0) = ( 18 -  6 \sigma  -  6 \sigma^2) (1 - \tau),
\end{equation*}
where $\sigma$ and $\tau$ are elements of $G = \Gal (K_1/k_0)$ of orders $3$ and $2$ respectively.

Now let $k_0 = \mathbb{Q}(\sqrt{43})$.  Let $k_1$ be the cubic extension of $k_0$ generated by a root of $p = x^3-21x+2\sqrt{43}$.  Let $K_0 = k_0 \left( \sqrt{-3} \right)$ and $K_1 = k_1 \left( \sqrt{-3} \right)$.  Computations performed with PARI/GP indicate that there is only one cyclic degree 9 extension of $k_0$ unramified away from 3 --- the cyclotomic one.  According to Corollary \ref{C:nosplitconditions}, $w_1 \theta_{K_1/k_0, S^{\mathrm{min}}} (0)$ should be in $\mathbb{Z}[G]$ but not in $3 \mathbb{Z}[G]$.  This is confirmed by PARI/GP, which shows that
\begin{equation*}
	w_1 \theta_{K_1/k_0, S^{\mathrm{min}}} (0) = ( 20 -  4 \sigma  -  4 \sigma^2) (1 - \tau),
\end{equation*}
where $\sigma$ and $\tau$ are elements of $G = \Gal (K_1/k_0)$ of orders $3$ and $2$ respectively.

Finally, we single out the row of the table with $d=173$ as an extension satisfying the sufficient condition for the $p$-primary Leopoldt's conjecture in Corollary \ref{C:Leopoldt} (although the conjecture is already trivial in this example because $k_0$ is a real quadratic field).

\newpage

\begin{center}
\renewcommand{\arraystretch}{1.3}
\begin{tabular}{|c | c |>{\small}c | c|>{\small}c | c |}
	\hline
	$n$ & $h_3$ & $p$ & \small $\mathfrak{K}_p$ trivial & $w_{1} \theta (0)$ & \small (iv) holds \\
	\hline
	29 & 3 &  $x^3 - 6 x - \sqrt{29}$ & yes &  $\left( 18 - 6 \sigma - 6 \sigma^2 \right) (1-\tau)$ & yes\\
	\hline
	43 & 3 &   $x^3 - 21x - 2\sqrt{43}$ & yes &  $\left( 20 - 4 \sigma - 4 \sigma^2 \right) (1-\tau)$ & no\\
	\hline
	58 & 3 &  $x^3 - 30x - 4 \sqrt{58}$ & yes &  $\left( 20 - 4 \sigma - 4 \sigma^2 \right) (1-\tau)$ & no \\
	\hline
	62 & 3 &   $x^3 - (27 + 3 \sqrt{62})x - (65 + 8 \sqrt{62})$ & no & &\\
	\hline
	67 & 3 &   $x^3 - 21 x - 4\sqrt{67}$ & yes & $\left( 36 - 12 \sigma - 12 \sigma^2 \right)(1-\tau)$ & yes\\
	\hline
	74 & 3 &   $x^3 - 21 x - (11 + 3 \sqrt{74})$ & yes & $\left( 24 - 24 \sigma + 12 \sigma^2 \right)(1-\tau)$ & yes\\
	\hline
	77 & 3 &  $2x^3 - 24 x - (5 + 3 \sqrt{77})$ & no & &\\
	\hline
	79 & 3 &  $x^3 - 21 x - (9 + 2 \sqrt{79})$ & yes & $\left( 24 + 24 \sigma - 36 \sigma^2 \right)(1-\tau)$ & yes\\
	\hline
	82 & 3 &  $x^3 - (11+\sqrt{82}) x - 1$ & yes & $ \left( 8-16\sigma + 20 \sigma^2 \right) (1-\tau)$ & no\\
	\hline
	83 & 3 &  $x^3 - 30 x - (2 + 6 \sqrt{83})$ & no & &\\
	\hline
	85 & 3 &  $2 x^3 - (13+\sqrt{85}) x - 2$ & yes & $ \left( -4 + 8 \sigma + 8 \sigma^2 \right)(1-\tau)$ & no\\
	\hline
	93 & 3 &  $x^3 - 24 x - 8$ & yes & $ \left( 10 - 2 \sigma - 2 \sigma^2 \right) (1-\tau)$ &  no\\
	\hline
	103 & 3 & $x^3 - (14 + \sqrt{103}) x - (23 + 2\sqrt{103})$ & no & &\\
	\hline
	106 & 3 & $x^3 - (17 + \sqrt{106}) x - (13 + \sqrt{106})$  & yes & $ \left( 20 +20 \sigma - 28 \sigma^2 \right) (1-\tau)$ & no\\
	\hline
	109 & 3 & $x^3 - 12 x - \sqrt{109}$ & yes & $ \left( 44 - 16 \sigma - 16 \sigma^2 \right) (1-\tau)$ & no\\
	\hline
	113 & 3 & $x^3 - 15 x - 2 \sqrt{113} $ & yes & $ \left( 10 - 2 \sigma - 2 \sigma^2 \right) (1 -\tau)$ & no \\
	\hline
	122 & 3 & $x^3 - 15 x - 2 \sqrt{122}$ & yes & $ \left( 36 - 12 \sigma - 12 \sigma^2 \right) (1-\tau)$ & yes\\
	\hline
	131 & 3 & $x^3 - 33 x - 2 \sqrt{131}$ & yes & $ \left( 20 - 4 \sigma - 4 \sigma^2 \right) (1-\tau)$ & no\\
	\hline
	137 & 3 & $x^3 - 15 x - \sqrt{137}$ & yes &  $ \left( 18 - 6 \sigma - 6 \sigma^2 \right) (1-\tau)$ & yes \\
	\hline
	139 & 3 & $x^3 - (14 + \sqrt{139}) x - (23 + 2 \sqrt{139})$ & no & &\\
	\hline
	142 & 3 & $x^3 - 30 x - (36 + 2 \sqrt{142})$ & yes & $\left( 28 + 40 \sigma - 44 \sigma^2 \right) (1-\tau)$ & no\\
	\hline
	151 & 3 & $x^3 - 21 x - (24 + \sqrt{151})$ & no & &\\
	\hline
	173 & 9 & $2x^3 - (45 + 3 \sqrt{173}) x - (56 + 4 \sqrt{173})$ & yes & $ \left( 38 - 10 \sigma - 10 \sigma^2 \right) (1-\tau)$ & no\\	
	\hline
	179 & 3 & $x^3 - 39 x - (7 + 6 \sqrt{179})$ & no & &\\
	\hline
	181 & 3 & $x^3 - 12 x + \sqrt{181}$ & no & & \\
	\hline
	182 & 3 & $x^3 - (45 + 3 \sqrt{182}) x - (55 + 4 \sqrt{182})$ & yes & $\left( 36 - 36 \sigma +24 \sigma^2 \right) (1-\tau)$ & yes\\
	\hline
	183 & 3 & $x^3 - 45 x - 6 \sqrt{183}$ & yes &  $ \left( 12 + 0 \sigma + 0 \sigma^2 \right) (1-\tau)$ & yes\\
	\hline
	199 & 3 & $x^3 - (20 + \sqrt{199}) x - (33 + 2 \sqrt{199})$ & no & &
	 \rule[-8pt]{0pt}{5pt}\\
	\hline
\end{tabular}
\end{center}

\end{document}